\documentclass[12pt, reqno]{amsart}
\usepackage{amsmath}
\usepackage{amssymb}
\usepackage{amsthm}
\usepackage{enumerate}
\usepackage[mathscr]{eucal}
\usepackage{xcolor}
\theoremstyle{plain}
\usepackage{tikz}
\usepackage{soul}
\usepackage[normalem]{ulem}
\newtheorem{theorem}{Theorem}[section]
\newtheorem{lemma}[theorem]{Lemma}
\newtheorem{prop}[theorem]{Proposition}
\theoremstyle{definition}
\newtheorem{definition}[theorem]{Definition}
\newtheorem{remark}[theorem]{Remark}

\newtheorem{cor}[theorem]{Corollary}
\theoremstyle{remark}

\begin{document}
\title[On symmetricity of orthogonality]{On symmetricity of orthogonality with respect to numerical radius norm}
	
	\author[ Sohel ]{Shamim Sohel  }
	
	\thanks{The author would like to thank ANRF, Govt. of India, for the financial support in the form of National Post Doctoral Fellowship (File No.: PDF/2025/001669) under the mentorship of Dr. Surjit Kumar. }
	
	\address[Sohel]{Department of Mathematics, Indian Institute of Technology Madras, Chennai- 600036,  India.}	\email{shamimsohel11@gmail.com}

		\subjclass[2020]{Primary 47L05, Secondary 46B20}
	\keywords{Birkhoff-James orthogonality,   numerial radius norm, left symmetric points, right symmetric points, space of all continuous functions}
	
	\maketitle
	
	\begin{abstract}
	We study Birkhoff-James orthogonality and local symmetric points in the space $C(K,X)$ of vector-valued continuous functions equipped with a 	numerical-radius type (semi-)norm. We extend the theory of abstract numerical ranges to semi-normed spaces and obtain complete characterizations of the left and right symmetric points of $C(K,X)$ with respect to the numerical-radius (semi-) norm. We also establish corresponding results for the space  $\mathcal{K}(X)$ of compact operators on $X$.
	\end{abstract}

\section{Introduction}

Let $X$ be a Banach space over the field $\mathbb K$. We denote by
$S_X$ and $B_X$ the unit sphere and the closed unit ball of $X$,
respectively.  An element
$x\in X$ is said to be \emph{Birkhoff-James orthogonal} to
$y\in X$, denoted by $x\perp_B y$, if
\[
\|x+\lambda y\|\geq \|x\|,
\qquad
\forall\,\lambda\in\mathbb K.
\]
Introduced by Birkhoff \cite{B} in the setting of normed linear spaces and later studied  by James \cite{J}, this notion provides a natural extension of the usual orthogonality from Hilbert spaces to arbitrary normed spaces. Indeed, whenever the norm is induced by an inner product, $
x\perp_B y
\Longleftrightarrow
\langle x,y\rangle=0.$
Unlike Hilbert spaces, however Birkhoff-James orthogonality is generally not symmetric in general Banach spaces. This led to the introduction of the notions of local symmetric points in \cite{S1}.

\begin{definition}
	An element $x\in X$ is called a \emph{left symmetric point} if
	$x\perp_B y$ implies $y\perp_B x$ for every $y\in X$.
	Similarly, $x$ is called a \emph{right symmetric point} if
	$y\perp_B x$ implies $x\perp_B y$ for every $y\in X$.
	An element is called a \emph{symmetric point} if it is both left and right symmetric.
\end{definition}

The study of symmetric points has attracted considerable attention in the geometry of Banach spaces due to its close connection with the structure of normed spaces and the theory of isometries; see, for example, \cite{BRS, CSS,GSP,PMW,S1,SRBB, SSP, T}. Although abstract characterizations of symmetric points are known (see \cite{MPS}), explicit descriptions have been obtained only in certain classes of Banach spaces, including some sequence spaces (\cite{BRS}) and some special spaces of bounded operators (\cite{PMW, PSS}). Determining the symmetric points explicitly in a given Banach space can therefore become a challenging problem due its involved connection with the geometry of the space.  In this article, we study Birkhoff-James orthogonality and the local symmetric points in the space of vector-valued continuous functions endowed with a numerical-radius-type (semi-)norm. This extends our earlier work on $C(K,X)$  endowed with the usual supremum norm to a substantially different geometric setting (see \cite{PSS}). To motivate this construction, we briefly recall the notions of abstract numerical ranges and the classical numerical radius.

For $u\in S_X$, let $ J(u) = \{x^*\in S_{X^*}:x^*(u)=1\}, $
the set of all support functionals at $u$.
The \emph{abstract numerical range} of $x\in X$ relative to $u$ is
defined by
\[
V(X,u,x)= \{x^*(x):x^*\in J(u)\},
\]
and the associated \emph{abstract numerical radius} is
\[
v_u(x)= \sup_{x^*\in J(u)}|x^*(x)|.
\]
Clearly $v_u(\cdot)$ is a semi-norm on $X$. Moreover,
$v_u(\cdot)$ is a norm if and only if $J(u)$ separates the points of
$X$. Such a point $u \in S_X$ is called a \emph{vertex} of the unit ball.
Observe that every vertex is necessarily an extreme point of $B_X$.
Conversely, if $X$ is finite-dimensional and polyhedral, then every
extreme point of $B_X$ is a vertex. The abstract numerical radius has
been studied extensively and has found numerous applications in Banach
space geometry, see \cite{MMQRS}.

We also recall the classical numerical range on $\mathcal{L}(X)$, the space of all bounded linear operators on $X$.
For $T\in \mathcal{L}(X)$, the numerical range is defined by
\[
W(T) = \{x^*(Tx): x\in S_X,\,x^*\in S_{X^*},\,x^*(x)=1\},
\]
and the corresponding numerical radius is
\[
w(T) =\sup \{|x^*(Tx)|:  x\in S_X,\,x^*\in S_{X^*},\,x^*(x)=1\}.
\]
This classical numerical range is, in fact, a special case of the abstract construction above, for any $T \in \mathcal{L}(X),$    
$$W(T)= V(\mathcal{L}(X), I, T), $$ 
where $I$ is the identity operator on $X.$
The numerical radius $w(\cdot)$ is likewise  a semi-norm on $\mathcal{L}(X)$ and plays a fundamental role in the local theory of operators. Recently, Birkhoff-James orthogonality in $\mathcal{L}(X)$ equipped with the numerical radius (semi-)norm has been studied in \cite{MPS22, RS}.  The corresponding left and right symmetric elements, referred to as \emph{nr-left and nr-right symmetric operators}, respectively, have also been investigated in this setting in \cite{CS, GMPS}.

We now consider a numerical-radius-type semi-norm on the space of vector-valued continuous functions. 
Let $K$ be a compact subset of the unit sphere $S_X$ and consider the
Banach space
\[
C(K,X) = \{f:K\rightarrow X:\,f\text{ is continuous}\}.
\]
For each $k \in K$, let $
J(k) = \{x^*\in S_{X^*}:x^*(k)=1\}, $
and define
\[
\Pi(K,X) = \{(k,x^*):k \in K,\,x^*\in J(k)\}.
\]
For any $f \in C(K, X),$ the numerical range is defined as 
\[
W(f)= \{ x^*(f(k)):(k,x^*)\in\Pi(K,X)\}
\] 
and corresponding numerical radius is 
\[
\|f\|_{w} =\sup \{|x^*(f(k))|:(k,x^*)\in\Pi(K,X)\}, \quad f\in C(K,X).
\]
In general, $\|\cdot\|_{w}$ is only a semi-norm on $C(K,X)$ and may be
viewed as an analogue of the classical numerical radius on $\mathcal{L}(X)$. Numerical ranes and numerical radius of this kind for continuous functions have been considered in several settings, see \cite{B62, BD71, BD73, H, H74, M}. The particular construction considered here was introduced in \cite{MMR}, where it was referred to as the \emph{spatial numerical range}.
Let $f, g \in C(K, X)$, we say that $f$ is Birkhoff-James orthogonal to $g$ with respect to the (semi-)norm $\|\cdot\|_w,$ if 
\[
\|f + \lambda g\|_w \geq \|f\|_w, \quad \, \forall \lambda \in \mathbb{K}.
\] 
We denote it as $ f \perp_w g.$ Similar to BJ-orthogonality in normed spaces, this orthogonality is not symmetric in nature. In analogy with the corresponding notions for $\mathcal{L}(X)$, we say that
$f\in C(K,X)$ is \emph{$nr$-left symmetric} if for any  $g\in C(K,X)$,
\[
f\perp_w g \quad \implies \quad g\perp_w f.
\]
 Similarly, $f$ is said to be
\emph{$nr$-right symmetric} if for any $g\in C(K,X)$,
\[
g\perp_w f \quad \implies \quad f\perp_w g.
\]
We say $f$ is $nr$-symmetric if $f$ is both $nr$-left and $nr$-right symmetric.
The principal aim of this paper is to characterize these symmetric
functions in terms of the geometry of the corresponding pointwise
abstract numerical-radius semi-norms. Our first main result gives a
complete characterization of $nr$-left symmetric functions.

\begin{theorem}\label{main-left}
	Let $X$ be a Banach space and let $K$ be a compact subset of $S_X$. 	Let $f\in C(K,X)$. Then $f$ is $nr$-left symmetric if and only if  there exists $k_0 \in K$ such that the following conditions hold:
	\begin{itemize}
		\item[(i)]  for any $k \neq k_0,$ and for any $x^* \in J(k),$ 	$x^*(f(k))=0$ 
		\item[(ii)] $f(k_0)$ is left symmetric with respect to 	$v_{k_0}(\cdot)$.
	\end{itemize}
\end{theorem}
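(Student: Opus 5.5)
The plan is to work throughout with a James-type characterization of $\perp_w$ for the semi-norm $\|\cdot\|_w$. Write $\|f\|_w=\sup_{k\in K} v_k(f(k))$. Since $\Pi(K,X)$ need not be weak$^*$-compact in a convenient way, I would first extend the abstract numerical-range machinery to seminorms. Concretely, I would realize $\|\cdot\|_w$ as a supremum of $|\phi(f)|$ over the weak$^*$-closure $E$ of the functionals $f\mapsto x^*(f(k))$, $(k,x^*)\in\Pi(K,X)$, inside $C(K,X)^*$. Then $E$ is weak$^*$-compact, and the standard Bhatia--Šemrl/James argument for sup-seminorms gives
\[
f\perp_w g \iff \text{for each }\lambda\text{ there is }\phi\in E\text{ with }|\phi(f)|=\|f\|_w,\ \mathrm{Re}\,\overline{\phi(f)}\lambda\phi(g)\ge 0 .
\]
Equivalently, $0$ lies in the convex hull of $\{\overline{\phi(f)}\phi(g):\phi\in E,\ |\phi(f)|=\|f\|_w\}$. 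I would also record the pointwise analogue for $v_{k}$ on $X$.

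\textbf{Sufficiency.} Assume (i) and (ii). Then $\|f\|_w=v_{k_0}(f(k_0))$. Norming functionals for $f$ can only come from $k_0$, or from limits $(k_\alpha,x^*_\alpha)$ with $k_\alpha\to k_0$. For $k_\alpha\ne k_0$ we have $x_\alpha^*(f(k_\alpha))=0$, so by continuity of $f$ such limits carry value $0$. Hence they cannot norm $f$ unless $f=0$, and that case is trivial. So the norming functionals are $x^*\in\overline{J(k_0)}^{w^*}$, acting as $x^*(f(k_0))$, and $f\perp_w g$ forces $f(k_0)\perp_{v_{k_0}} g(k_0)$. By (ii), $g(k_0)\perp_{v_{k_0}} f(k_0)$. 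Then for every $\lambda$,
\[
\|g+\lambda f\|_w\ge v_{k_0}(g(k_0)+\lambda f(k_0))\ge v_{k_0}(g(k_0)).
\]
This is the delicate point, because we need $\ge\|g\|_w$, and $\|g\|_w$ may be attained elsewhere. To handle it, take $k$ with $v_k(g(k))$ close to $\|g\|_w$. If $k\ne k_0$, condition (i) gives $v_k(g(k)+\lambda f(k))=v_k(g(k))$, so the supremum over $k\ne k_0$ is unaffected by adding $\lambda f$. Combining the two cases gives $\|g+\lambda f\|_w\ge\|g\|_w$.

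\textbf{Necessity.} Let $f\ne0$ be $nr$-left symmetric. First I would show that $\{k: v_k(f(k))\neq0\}$ is a single point $k_0$. Suppose $k_1\ne k_2$ with $f(k_i)$ having nonzero numerical value. Use Urysohn functions $\varphi_1,\varphi_2$ with disjoint supports around $k_1,k_2$ and build $g=\varphi_1 f$, or $g=\varphi_1 h$ for a suitable $h$, such that $f\perp_w g$ via norming functionals supported near the maximizing region while $g\not\perp_w f$. The model is the $C(K)$ argument in \cite{PSS}: if $f$ attains its norm near $k_1$, take $g$ supported near $k_2$ with large norm. Since $f$ attains its $\|\cdot\|_w$ norm at some point, and a point with nonzero value that is not a maximum can be dealt with by scaling $g$ there, this yields (i) at every $k\ne k_0$.

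For (ii), given $x,y\in X$ with $f(k_0)\perp_{v_{k_0}}y$, take $g=\varphi\cdot y$ with $\varphi(k_0)=1$ and $0\le\varphi\le1$, supported in a small neighborhood. Then $f\perp_w g$ holds, because norming only occurs at $k_0$. Symmetry gives $g\perp_w f$, that is,
\[
\sup_k v_k(\varphi(k)y+\lambda f(k))\ge \sup_k\varphi(k)v_k(y).
\]
Shrinking the support and using (i), I would localize this to $v_{k_0}(y+\lambda f(k_0))\ge v_{k_0}(y)$.

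\textbf{Main obstacle.} The localization in the last step is the hardest part. $v_k$ is not continuous in $k$, so $\sup_{k\text{ near }k_0}\varphi(k)v_k(y)$ may exceed $v_{k_0}(y)$. I would address this by replacing $y$ by functions $g(k)$ chosen so that $v_k(g(k))\le v_{k_0}(y)$ near $k_0$. One option is $g(k)=\varphi(k)\,T_k y$, built by an upper-semicontinuity argument on $k\mapsto J(k)$. If that fails, I would redefine the relevant seminorm at $k_0$ as the upper envelope $\limsup_{k\to k_0}v_k$, matching the weak$^*$-closure convention of $E$.
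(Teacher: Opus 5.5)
You have the sufficiency direction right, and it is slightly more direct than the paper's case split on norming pairs: for $k\neq k_0$, (i) gives $v_k(g(k)+\lambda f(k))=v_k(g(k))$, so only $k_0$ matters. Your necessity (i) is loosely stated but sound in substance. Its cleanest form is $g=\varphi f$ with $\varphi\ge 0$, $\varphi(k_1)=1$, and $\varphi(k_0)=0$ at a point $k_0$ where $f$ attains $\|f\|_w$. Then $\|f+\lambda g\|_w\ge v_{k_0}(f(k_0))=\|f\|_w$, while every norming pair of $g$ gives $\overline{x^*(g(k))}\,x^*(f(k))=\varphi(k)|x^*(f(k))|^2>0$, so $g\not\perp_w f$. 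This avoids the $r/2$-estimate the paper needs for its $g=\varphi f(k_1)$.

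The genuine gap is in necessity (ii), at the step you flag yourself, and neither remedy closes it. $T_k$ is never defined, and replacing $v_{k_0}$ by an upper envelope would change the statement being proved. Shrinking the support of $\varphi$ does not help either. By (i), your displayed inequality reads $\max\{v_{k_0}(y+\lambda f(k_0)),s\}\ge\max\{v_{k_0}(y),s\}$ with $s=\sup_{k\neq k_0}\varphi(k)v_k(y)$. If $k_0$ is not isolated and $k\mapsto v_k(y)$ is continuous at $k_0$, then $s\ge v_{k_0}(y)$ and the inequality says nothing. Put differently: as soon as $g=\varphi y$ has a norming pair $(k,x^*)$ with $k\neq k_0$, condition (i) gives $x^*(f(k))=0$, hence $g\perp_w f$ for free, and no contradiction can be extracted.

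The missing observation is that discontinuity is not the real obstruction. The map $k\mapsto v_k(y)$ is always upper semicontinuous: if $k_n\to k_0$ and $x_n^*\in J(k_n)$, every weak$^*$ cluster point of $(x_n^*)$ lies in $J(k_0)$. For the same reason your closure $E$ adds no new functionals, and your upper envelope is just $v_{k_0}$ again. The only thing that can go wrong is that $k_0$ need not be a \emph{strict} maximum of $k\mapsto\varphi(k)v_k(y)$, and upper semicontinuity lets you force this. You may assume $v_{k_0}(y)>0$, since otherwise $y\perp_{v_{k_0}}f(k_0)$ trivially. Then:
\begin{itemize}
\item Put $m(t)=\sup\{v_k(y):\|k-k_0\|\le t\}$, which decreases to $v_{k_0}(y)$ as $t\downarrow 0$.
\item Let $\tilde m(t)=\frac1t\int_t^{2t}m(s)\,ds$ for $t>0$ and $\tilde m(0)=v_{k_0}(y)$. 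This is continuous, with $m(t)\le\tilde m(t)\le m(2t)$.
\item Set $\varphi(k)=e^{-\|k-k_0\|}\,v_{k_0}(y)/\tilde m(\|k-k_0\|)$.
\end{itemize}
Then $\varphi(k_0)=1$ and $\varphi(k)v_k(y)<v_{k_0}(y)$ for every $k\neq k_0$. Hence $\|g\|_w=v_{k_0}(y)$, and every norming pair of $g=\varphi y$ has the form $(k_0,x^*)$ with $|x^*(y)|=v_{k_0}(y)$.

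Now use the James-type characterization, not the sup inequality, since $s$ may still equal $v_{k_0}(y)$. It turns $g\perp_w f$ into $y\perp_{v_{k_0}}f(k_0)$. Meanwhile $f\perp_w g$ follows from $\|f+\lambda g\|_w\ge v_{k_0}(f(k_0)+\lambda y)\ge v_{k_0}(f(k_0))=\|f\|_w$.

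Be aware that the paper's own proof of this step has the same hole. Its first case (a norming pair of $g$ at some $k_1\neq k_0$) only yields $g\perp_w f$, which contradicts nothing. A cutoff of the above kind is exactly what rules that case out.
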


 The characterization of $nr$-right symmetric functions exhibits a
substantially different behavior.   In contrast to the left symmetric case, the numerical-radius
seminorm of $f$ must be attained at every point of $K$, while each
$f(k)$ must be right symmetric with respect to $v_k(\cdot)$.

\begin{theorem}\label{main-right}
	Let $X$ be a real Banach space and let $K$ be a compact subset of	$S_X$. Let $f\in C(K,X)$. Then $f$ is $nr$-right symmetric if and only 	if  for every $k\in K$ the following conditions hold:
	\begin{itemize}
		\item[(i)] there exists $x^*\in J(k)$ such that	$|x^*(f(k))|=\|f\|_w$
		\item[(ii)] $f(k)$ is right symmetric with 	respect to $v_k(\cdot)$.
	\end{itemize}
\end{theorem}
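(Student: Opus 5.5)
The plan is to reduce everything to a Bhatia–Šemrl / James-type description of $\perp_w$, and then to localize at points of $K$ by building test functions with small support using Urysohn's lemma.

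\textbf{Step 1: a characterization of $\perp_w$.} The set $\Pi(K,X)$ is not compact in general. I will therefore work with $\overline{\Pi}$, the weak$^*$-closure in $K\times B_{X^*}$ of the pairs $(k,x^*)$ with $x^*\in J(k)$. Any weak$^*$-limit point of such pairs is again such a pair, because $x^*(k)=1$ passes to the limit: $k_\alpha\to k$ in norm and $x^*_\alpha\to x^*$ weak$^*$ with $\|x^*_\alpha\|\le 1$. The functional $(k,x^*)\mapsto x^*(f(k))$ is continuous on this compact set. Hence $\|f\|_w$ is the sup norm of an element of $C(\Pi)$ with $\Pi$ compact.

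The classical James/Kečkić characterization in $C(\Pi)$, over $\mathbb R$, then gives the following. We have $f\perp_w g$ if and only if there are $(k_1,x_1^*),(k_2,x_2^*)\in M_f$ with
\[
x_1^*(f(k_1))\,x_1^*(g(k_1))\ge 0 \quad\text{and}\quad x_2^*(f(k_2))\,x_2^*(g(k_2))\le 0,
\]
where $M_f$ is the set of norming pairs. The case $\|f\|_w=0$ is trivial. A pointwise version holds for the seminorm $v_k$, with norming functionals in $J(k)$.

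\textbf{Step 2: necessity of (i).} Suppose $f$ is nr-right symmetric and fix $k\in K$. Choose a continuous bump $\phi$ with $\phi(k)=1$, $0\le\phi\le 1$, supported near $k$. Take $g=\phi\cdot y$ where $y=f(k)$, or some $y$ chosen so that $g\perp_w f$ is forced through norming at points away from $k$. If $\|f\|_w$ is not attained in $J(k)$, I will construct $g$ with $\|g\|_w$ attained only near $k$ and $g\perp_w f$ via two sign-opposite norming pairs there. Then $f\perp_w g$ must fail, because perturbing $f$ by $\lambda g$ with $\lambda$ small only changes values near $k$, where $|f|$ stays strictly below $\|f\|_w$.

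More precisely, I will pick $g$ so that $g$ attains its norm only at pairs over $k$ and $f$ takes both signs there. One option is $g=\phi\,u$ with $u\in X$ such that $v_k(u)=1$ is attained with $x^*(f(k))$ of both signs. If that is impossible, I will use a case split. This construction step is the main obstacle. The difficulty is arranging $g\perp_w f$ while keeping $f\not\perp_w g$, and compactness of $K$ together with the local continuity estimate $|x^*(f(k'))|<\|f\|_w-\varepsilon$ near $k$ (uniformly for $x^*\in J(k')$ close to $J(k)$, from the weak$^*$-closure of $\Pi$) is what I would lean on.

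\textbf{Step 3: necessity of (ii).} Given $y$ with $y\perp_{v_k} f(k)$, set $g=\phi\,y+(\text{correction})$. The correction is chosen so that the norming set of $g$ lives over $k$, for instance by scaling $y$ so that $v_k(y)$ dominates and multiplying by a peaked bump. Then $g\perp_w f$ follows from Step 1, so $f\perp_w g$. Since by (i) $f$ attains its norm over $k$, and $g$ vanishes away from $k$, this yields $f(k)\perp_{v_k} y$.

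\textbf{Step 4: sufficiency.} Assume (i) and (ii) and suppose $g\perp_w f$. Take norming pairs $(k_i,x_i^*)$ of $g$ with opposite signs of $x_i^*(g)\,x_i^*(f)$. If both lie over the same $k$, then $g(k)\perp_{v_k} f(k)$ with $v_k(g(k))=\|g\|_w$, and (ii) gives $f(k)\perp_{v_k} g(k)$. Combined with $v_k(f(k))=\|f\|_w$ from (i), Step 1 then gives $f\perp_w g$.

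If the signs occur at different points $k_1,k_2$, I will still use (i) at each point. At $k_1$, (i) gives a functional norming $f$. I then need the sign of $g$ on it, obtained by a convexity argument in $J(k_1)$: the set $J(k_1)$ is convex, and $x^*\mapsto x^*(g(k_1))$ is affine on it. The aim is to produce norming pairs of $f$ with both signs of $x^*(f)\,x^*(g)$. I expect this to need (ii) to transfer the sign information; the realness of the scalars is used here.
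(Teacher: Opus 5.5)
Your Steps~2 and~3 localize in the wrong direction, and this is a genuine gap. To contradict right symmetry you need a $g$ with $g\perp_w f$ but $f\not\perp_w g$. By your own Step~1, $f\not\perp_w g$ holds only if $x^*(f(k'))\,x^*(g(k'))$ is nonzero with one fixed sign at \emph{every} norming pair $(k',x^*)$ of $f$. A $g$ supported near $k$ and away from $N_w(f)$ vanishes at all these pairs, so $\|f+\lambda g\|_w\ge |x^*(f(k'))|=\|f\|_w$ for all $\lambda$. In other words, your remark that perturbing by $\lambda g$ ``only changes values near $k$, where $|f|$ stays strictly below $\|f\|_w$'' proves $f\perp_w g$, not its failure.

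Step~3 fails for the same reason. Once (i) holds, every point of $K$ carries a norming pair of $f$. So if $g$ vanishes off a neighbourhood of $k$, then $f\perp_w g$ is witnessed at some other point with product $0$, and it tells you nothing about $f(k)\perp_{v_k}y$.

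The missing idea is that the test function must \emph{coincide with $f$} away from the point being probed.
\begin{itemize}
\item For (i), the paper takes $g=\beta f+\alpha w_1$. Here $\beta\equiv1$ on $N_w(f)$, $\alpha$ is a bump at $k_1$ with support disjoint from that of $\beta$, and $w_1=-(\|f\|_w/r)f(k_1)$ if $r=v_{k_1}(f(k_1))>0$, while $w_1=\|f\|_w k_1$ if $r=0$. These are normalized so that $\|g\|_w=\|f\|_w$ is attained over $k_1$. Then $g=f$ at the norming pairs of $f$, so $f\not\perp_w g$. Meanwhile $g$ has positive products over $N_w(f)$ and non-positive ones over $k_1$, so $g\perp_w f$.
\item For (ii), the paper argues by contradiction from a witness with $y\perp_{v_{k_0}}f(k_0)$ and $f(k_0)\not\perp_{v_{k_0}}y$. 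It sets $g=(1-\alpha)f+\alpha z$, with $z$ a suitable multiple of $y$ and $\alpha$ a bump at $k_0$. Then $g\perp_w f$ is read off at $k_0$. Also $f\not\perp_w g$, because the strict sign condition at $k_0$ spreads to all norming pairs of $f$ over a neighbourhood of $k_0$ via the upper semicontinuity in Lemma~\ref{lemma1}, while $g=f$ elsewhere.
\end{itemize}

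In Step~4 the same-point case is fine. The case $k_1\neq k_2$, however, is where the real content lies, and you leave it open; a convexity argument inside $J(k_1)$ does not supply the needed sign. What is needed is the half-orthogonality transfer that right symmetry gives over $\mathbb R$ (the paper's Theorem~\ref{v-right}). It says: if some $x^*\in J(k)$ with $|x^*(g(k))|=v_k(g(k))$ has $\operatorname{sgn}(x^*(g(k)))\,x^*(f(k))\ge 0$, then some $y^*\in J(k)$ with $|y^*(f(k))|=v_k(f(k))$ has $\operatorname{sgn}(y^*(f(k)))\,y^*(g(k))\ge 0$, and likewise with $\le$.

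This can be proved as follows.
\begin{itemize}
\item The hypothesis gives $v_k(g(k)+\lambda f(k))\ge v_k(g(k))$ for $\lambda\ge0$. Since $v_k(f(k))>0$ when $\|f\|_w>0$, the map $\lambda\mapsto v_k(g(k)+\lambda f(k))$ has a minimizer $\lambda_0\le 0$.
\item At that minimizer, $g(k)+\lambda_0 f(k)\perp_{v_k}f(k)$, so full right symmetry gives $f(k)\perp_{v_k}g(k)+\lambda_0 f(k)$.
\item This produces a $y^*$ with $\operatorname{sgn}(y^*(f(k)))\,y^*(g(k))\ge-\lambda_0|y^*(f(k))|\ge0$.
\end{itemize}
Now $|x_i^*(g(k_i))|=\|g\|_w$ forces $v_{k_i}(g(k_i))=\|g\|_w$, so you can apply this transfer separately at $k_1$ and at $k_2$. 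Condition (i) makes the resulting $y_i^*$ norming for $f$ in $\|\cdot\|_w$, which gives $f\perp_w g$.

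A minor point: $\Pi(K,X)$ is already compact, as your own closure argument shows, so passing to its closure is unnecessary.
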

The proofs of Theorems~\ref{main-left} and~\ref{main-right} are given
in Theorem \ref{left} and \ref{right}, respectively.  In addition, we investigate left and right
symmetric points with respect to the Birkhoff-James orthogonality
induced by the semi-norm $v_u(\cdot)$, which plays a crucial role in
the above characterizations. We subsequently derive several
consequences of these results in some classical Banach
spaces and in Hilbert spaces. Finally, we apply  the results to the space $K(X)$ of compact operators and obtain  corresponding results for compact operators endowed with the
numerical radius semi-norm.

    \section{Preliminaries}
	
	Although Birkhoff-James orthogonality is traditionally defined for
	normed spaces, the same definition applies naturally to semi-normed
	spaces. Let $(X,\eta)$ be a semi-normed space over $\mathbb K$. We say
	that $x$ is Birkhoff-James orthogonal to $y$ with respect to $\eta$,
	denoted by $x\perp_\eta y$, if
	\[
	\eta(x+\lambda y)\geq\eta(x) 	\quad \text{for every } \lambda\in\mathbb K.
	\]
	When $\eta$ is a norm, this is the usual Birkhoff-James orthogonality.
	
	For clarity we will write $(X, \eta)$ as $X_\eta$ and the dual $ (X, \eta)^*$ as $X_\eta^*.$
	Put $N_\eta=\ker\eta$, and let $q:X\to X/N_\eta$ be the quotient map.
	The quotient space $X/N_\eta$, endowed with the norm 
	\[
	\| x + N_\eta\|=\eta(x),
	\]
	is a normed space. We identify its dual isometrically with the	subspace $
	X_\eta^*=\{\phi\in X^*:\phi|_{N_\eta}=0\} $
	of $X^*$. The corresponding
	dual norm is given by $
	\|\phi\|_\eta
	=
	\sup\{|\phi(x)|:\eta(x)\leq1\}. $
	A subset $\Lambda$ of $B_{X_\eta^*}$ is called \emph{one-norming} for
	$(X,\eta)$ if
	\[
	\eta(x)=\sup_{\phi\in\Lambda}|\phi(x)|,
	\qquad x\in X.
	\]
	For $u\in X$, let $
	J_\eta(u)
	=
	\{\phi\in S_{X_\eta^*}:\phi(u)= \eta(u)\}$
	denotes the set of support functionals at $u$. The abstract numerical
	range of $z\in X$ with respect to $u$ is
	\[
	V_\eta (X,u,z)
	=
	\operatorname{conv}\{\phi(z):\phi\in J_\eta(u)\}.
	\]

	The following observation allows us to transfer the relevant notions
	to the normed quotient.
	
\begin{prop}\label{prop:quotient}
	Let $(X,\eta)$ be a semi-normed space and let $N_\eta = \ker \eta$.  Suppose that  $q : X \to X/N_\eta$ denote the canonical quotient map $q(x) = \widehat{x} = x + N_\eta$. Under the canonical isometric identification $(X/N_\eta)^* \cong X_\eta^*$, support functionals, abstract numerical ranges, and abstract numerical radii are preserved. More precisely, if $u \in X$ with $\eta(u) = 1$ and $z \in X$, then $\phi \in J_\eta(u)$ if and only if $\phi = f \circ q$ for a unique $f \in J(\widehat{u})$. Consequently,
	\[
	V_\eta(X, u, z) = V(X/N_\eta, \widehat{u}, \widehat{z}) \qquad \text{and} \qquad v_u(z) = v_{\widehat{u}}(\widehat{z}),
	\]
	where $\widehat{u} = q(u)$ and $\widehat{z} = q(z)$.
	\end{prop}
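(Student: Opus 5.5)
The argument has two parts. First I establish the isometric identification $(X/N_\eta)^*\cong X_\eta^*$ explicitly, and then I check that the support sets correspond under it; the equalities of numerical ranges and radii then follow formally.

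For the identification, define $\Phi:(X/N_\eta)^*\to X_\eta^*$ by $\Phi(f)=f\circ q$. Since $q$ is linear, $f\circ q$ is linear on $X$, and it vanishes on $N_\eta$. It is bounded with respect to $\eta$ because $|f(q(x))|\le \|f\|\,\|\widehat{x}\| = \|f\|\,\eta(x)$. This bound also gives $\|f\circ q\|_\eta\le\|f\|$.

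For the reverse inequality, note that $\eta(x)\le 1$ if and only if $\|\widehat{x}\|\le1$, and $q$ maps $\{x:\eta(x)\le 1\}$ onto the closed unit ball of $X/N_\eta$. Hence $\|f\circ q\|_\eta=\sup_{\eta(x)\le1}|f(q(x))|=\|f\|$.

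Surjectivity works as follows. Take $\phi\in X_\eta^*$, meaning $\phi$ vanishes on $N_\eta$ and $|\phi(x)|\le C\eta(x)$. Then $f(\widehat{x}):=\phi(x)$ is well defined, because $\widehat{x}=\widehat{y}$ means $x-y\in N_\eta$, so $\phi(x)=\phi(y)$. This $f$ is linear and bounded, and $f\circ q=\phi$. Injectivity, which gives the uniqueness of $f$, follows from the surjectivity of $q$.

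There is one subtle point. The paper describes $X_\eta^*$ as $\{\phi\in X^*:\phi|_{N_\eta}=0\}$, but what is really needed is that $\phi$ is $\eta$-bounded. I will read $X_\eta^*$ as the space of $\eta$-continuous linear functionals, which automatically vanish on $N_\eta$. This is the only step requiring care, and with this reading it is harmless.

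Now fix $u$ with $\eta(u)=1$, so that $\|\widehat{u}\|=1$. Since $\Phi$ is an isometry, $\phi=f\circ q$ lies in $S_{X_\eta^*}$ if and only if $f\in S_{(X/N_\eta)^*}$. Moreover $\phi(u)=f(\widehat{u})$, so $\phi(u)=\eta(u)=1$ if and only if $f(\widehat{u})=1=\|\widehat{u}\|$. Therefore $\Phi$ restricts to a bijection $J(\widehat{u})\to J_\eta(u)$, with $f$ unique by injectivity.

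Consequently, for every $z\in X$,
\[
\{\phi(z):\phi\in J_\eta(u)\}=\{f(\widehat{z}):f\in J(\widehat{u})\}.
\]
Taking convex hulls on both sides gives $V_\eta(X,u,z)=V(X/N_\eta,\widehat{u},\widehat{z})$, with the convex hull taken in both definitions, or in neither, consistently. Taking suprema of absolute values gives $v_u(z)=v_{\widehat{u}}(\widehat{z})$. Here $v_u$ is understood with respect to $J_\eta(u)$, and the supremum of $|\cdot|$ over a set equals the supremum over its convex hull.
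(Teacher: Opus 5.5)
Your proof is correct and follows the same route as the paper. Like the paper, you identify $(X/N_\eta)^*$ with $X_\eta^*$ via $f\mapsto f\circ q$, use $(f\circ q)(u)=f(\widehat{u})$ to make the support sets correspond bijectively, and read off both equalities by evaluating at $z$.

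You supply the details the paper only asserts, namely that $f\mapsto f\circ q$ is an isometric bijection. Your reading of $X_\eta^*$ as the $\eta$-bounded functionals is the right one. Your convex-hull hedge is unnecessary: $J(\widehat{u})$ is convex, so $\{f(\widehat{z}):f\in J(\widehat{u})\}$ is already convex.
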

		
		\begin{proof}
			The canonical map $q^* : (X/N_\eta)^* \to X_\eta^*$ given by $q^*(f) = f \circ q$ is an isometric linear bijection. For $f \in (X/N_\eta)^*$, we have $\|f\| = \|f \circ q\|_\eta$. Furthermore, for $u \in X$ with $\eta(u) = 1$, $(f \circ q)(u) = f(\widehat{u})$. Hence, $f \in J(\widehat{u})$ if and only if $f \circ q \in J_\eta(u)$. The evaluation $(f \circ q)(z) = f(\widehat{z})$ immediately yields the equalities $V_\eta(X, u, z) = V(X/N_\eta, \widehat{u}, \widehat{z})$ and $v_u(z) = v_{\widehat{u}}(\widehat{z})$.
		\end{proof}
		
		We recall the following characterization of Birkhoff-James 	orthogonality on normed spaces in terms of the abstract numerical range.
		
		\begin{prop}\cite[Th. 2.1]{J}
			\label{prop:James}
			Let $X$ be a normed space and let $x,y\in X$. Then
			\[
			x\perp_B y
			\quad\Longleftrightarrow\quad
			0\in V(X,x,y)
			\quad\Longleftrightarrow\quad
			0\in
			\operatorname{conv}\{x^*(y):x^*\in J(x)\}.
			\]
		\end{prop}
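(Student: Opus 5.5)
The plan is to prove the two equivalences by reducing everything to the existence of a single support functional that annihilates $y$. First I dispose of the degenerate case $x=0$: then $x\perp_B y$ holds trivially, and with the convention $J(0)=S_{X^*}$ (every norm-one functional satisfies $x^*(0)=0=\|0\|$) the set $\{x^*(y):x^*\in J(0)\}$ contains $0$. For $x\neq 0$ both sides are invariant under replacing $x$ by $x/\|x\|$. So I may assume $\|x\|=1$, in which case $J(x)$ is exactly the set used to define $V(X,x,\cdot)$ in the introduction.

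Next I identify the two sets appearing on the right. I note that $J(x)$ is convex: a convex combination of norm-one functionals taking the value $1$ at $x$ has norm $\le 1$ and still takes the value $1$ at $x$, hence has norm exactly $1$. The map $x^*\mapsto x^*(y)$ is linear, so $\{x^*(y):x^*\in J(x)\}$ is already a convex subset of $\mathbb K$ and equals its convex hull. Consequently $0\in V(X,x,y)$ if and only if $0$ lies in the convex hull, and both conditions are equivalent to the existence of some $x^*\in J(x)$ with $x^*(y)=0$. (Weak$^*$ compactness is not even needed here.)

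The direction ($\Leftarrow$) is then immediate. Take $x^*\in J(x)$ with $x^*(y)=0$. For every $\lambda\in\mathbb K$ we have
\[
\|x+\lambda y\|\ \ge\ |x^*(x+\lambda y)|\ =\ |x^*(x)|\ =\ \|x\|,
\]
which is precisely $x\perp_B y$.

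The direction ($\Rightarrow$) is the only substantive step, and I will carry it out with Hahn--Banach. If $y$ is a scalar multiple of $x$, orthogonality forces $y=0$ (choose $\lambda$ so that $x+\lambda y=0$), and any $x^*\in J(x)$ works. Otherwise I define $f$ on $\operatorname{span}\{x,y\}$ by $f(\alpha x+\beta y)=\alpha\|x\|$. I must check that $\|f\|\le 1$, i.e.\ $\|\alpha x+\beta y\|\ge|\alpha|\,\|x\|$. For $\alpha=0$ this is clear. For $\alpha\ne0$, write $\|\alpha x+\beta y\|=|\alpha|\,\|x+(\beta/\alpha)y\|$ and apply $x\perp_B y$, which gives $\|x+(\beta/\alpha)y\|\ge\|x\|$. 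I then extend $f$ by the Hahn--Banach theorem, valid over both $\mathbb R$ and $\mathbb C$, to some $x^*\in X^*$ with $\|x^*\|\le1$. This functional satisfies $x^*(x)=\|x\|=1$, so $\|x^*\|=1$ and $x^*\in J(x)$, and it also satisfies $x^*(y)=0$. Hence $0\in V(X,x,y)$.

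The main care points are bookkeeping rather than depth: the normalization, since $J$ and $V$ were defined only for unit vectors, and the observation that convexity of $J(x)$ makes the convex hull in the third condition redundant.
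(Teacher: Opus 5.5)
Your proof is correct. The paper gives no proof of this proposition and only cites James's Theorem~2.1; your argument is the standard Hahn--Banach proof of that theorem, producing a norm-one $x^*$ with $x^*(x)=\|x\|$ and $x^*(y)=0$, and your observation that $J(x)$ is convex correctly shows the convex hull is redundant. Your convention $J(0)=S_{X^*}$ agrees with the paper's definition of $J_\eta$ in the preliminaries, and it is exactly what makes the statement hold for $x=0$.
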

	
	Since for any $x \in X, \eta(x)= \|x+ N_\eta\|,$ it is immediate that $$ u \perp_\eta z \iff u + N_\eta \perp_B z + N_\eta.$$
	Therefore, from Proposition \ref{prop:quotient} and \ref{prop:James}, we get the following. 
	
	\begin{prop}\label{V}
		Let $(X, \eta)$ be a semi normed space and let $u, z \in X.$ Then 
		\[
		u \perp_\eta z \iff 0 \in V_\eta(X, u, z) \iff 0 \in 
		\operatorname{conv}\{\phi(z):\phi\in J_\eta(u)\}.
		\]
	\end{prop}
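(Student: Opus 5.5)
Proposition~\ref{V} is the semi-normed analogue of Proposition~\ref{prop:James}, and the plan is to reduce it to that proposition through the quotient. The argument has three steps, each essentially a bookkeeping check on top of Propositions~\ref{prop:quotient} and~\ref{prop:James}.

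\textbf{Step 1: transfer orthogonality to the quotient.} Since $\eta(x)=\|x+N_\eta\|$ and $q$ is linear, we have $q(u+\lambda z)=\widehat u+\lambda\widehat z$. Hence
\[
\eta(u+\lambda z)\ge\eta(u)\ \text{ for all }\lambda\in\mathbb K
\iff
\|\widehat u+\lambda\widehat z\|\ge\|\widehat u\| \ \text{ for all }\lambda\in\mathbb K .
\]
That is, $u\perp_\eta z$ if and only if $\widehat u\perp_B\widehat z$ in the normed space $X/N_\eta$.

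\textbf{Step 2: the case $\eta(u)\neq0$.} By homogeneity of both orthogonality and $J_\eta$, we may normalize to $\eta(u)=1$. Then:
\begin{itemize}
\item Proposition~\ref{prop:James} gives $\widehat u\perp_B\widehat z \iff 0\in V(X/N_\eta,\widehat u,\widehat z)$.
\item Proposition~\ref{prop:quotient} gives $V(X/N_\eta,\widehat u,\widehat z)=V_\eta(X,u,z)$.
\item By definition, $V_\eta(X,u,z)=\operatorname{conv}\{\phi(z):\phi\in J_\eta(u)\}$.
\end{itemize}
Chaining these equivalences yields the claim.

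For the normalization, note that replacing $u$ by $u/\eta(u)$ leaves $J_\eta(u)$ unchanged. Indeed, $\phi(u)=\eta(u)$ is equivalent to $\phi(u/\eta(u))=1=\eta(u/\eta(u))$.

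\textbf{Step 3: the degenerate case $\eta(u)=0$.} Here $u\perp_\eta z$ holds trivially, since $\eta\ge0=\eta(u)$. On the other side, $J_\eta(u)=\{\phi\in S_{X_\eta^*}:\phi(u)=0\}$. Because every $\phi\in X_\eta^*$ vanishes on $N_\eta\ni u$, this set is all of $S_{X_\eta^*}$.

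We need $0\in\operatorname{conv}\{\phi(z):\phi\in S_{X_\eta^*}\}$, and this follows from symmetry of the sphere. If $X_\eta^*\neq\{0\}$, pick any $\phi\in S_{X_\eta^*}$; then $-\phi$ also lies in the sphere, and $0=\tfrac12(\phi(z)-\phi(z))$ is in the convex hull. If $\eta\equiv0$ the sphere is empty, and the statement should be read with the convention that this trivial case is excluded or automatic.

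\textbf{Expected obstacle.} The only real subtlety is Step 3, the handling of $\eta(u)=0$ and of normalization. The rest is a direct chaining of the two cited propositions.
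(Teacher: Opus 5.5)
Your proposal is correct and follows the paper's route: use $\eta(x)=\|x+N_\eta\|$ to get $u\perp_\eta z \iff \widehat u\perp_B \widehat z$ in $X/N_\eta$, then chain Propositions~\ref{prop:James} and~\ref{prop:quotient}. Your explicit normalization to $\eta(u)=1$ and your separate treatment of $\eta(u)=0$ supply details the paper leaves implicit. This includes your remark that the statement degenerates when $\eta\equiv 0$, since then $J_\eta(u)=\emptyset$.
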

	
	We now extend the characterization of abstract numerical ranges obtained in \cite[Theorem~2.4]{MMQRS}  for normed spaces to
	semi-normed spaces.
	
	\begin{theorem}\label{thm:semi-normed-numerical-range}
		Let $(X,\eta)$ be a semi-normed space and  let $u\in X$. Suppose that  $\Lambda \subset B_{X^*} \cap N_\eta^\perp$ is one-norming for $X$.
		Then, for every $z\in X$,
		\[
		V_\eta (X,u,z)
		=
		\operatorname{conv}
		\{
		\lim \overline{\phi_n(u)} \phi_n(z):
		\phi_n\in \Lambda, \forall n \,
		\lim |\phi_n(u)|= \eta(u)
		\}.
		\]
	\end{theorem}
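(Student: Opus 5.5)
The plan is to transfer everything to the normed quotient $Y=X/N_\eta$ via Proposition~\ref{prop:quotient}, and then prove the formula there. The argument is the semi-normed analogue of \cite[Theorem~2.4]{MMQRS}.

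\emph{Normalization and reduction.} If $\eta(u)=0$, then $J_\eta(u)$ is all of $S_{X_\eta^*}$. On the other side, the condition $\lim|\phi_n(u)|=0$ lets us take any sequence in $\Lambda$, and the limits $\lim\overline{\phi_n(u)}\phi_n(z)$ are then $0$. This degenerate case has to be handled or excluded separately, as in the normed version, where $u$ is assumed to lie on the sphere. So I will assume $\eta(u)=1$; the general case $\eta(u)>0$ follows by scaling. Every $\phi\in\Lambda$ vanishes on $N_\eta$, so $\phi=f\circ q$ for a unique $f\in B_{Y^*}$. The set $\widehat\Lambda=\{f:f\circ q\in\Lambda\}$ is one-norming for $Y$, because $\|\widehat x\|=\eta(x)=\sup_{\phi\in\Lambda}|\phi(x)|$. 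The values $\phi_n(u),\phi_n(z)$ coincide with $f_n(\widehat u),f_n(\widehat z)$. Hence the right-hand side equals the analogous set computed in $Y$ with $\widehat\Lambda$. By Proposition~\ref{prop:quotient}, the left-hand side $V_\eta(X,u,z)$ equals $V(Y,\widehat u,\widehat z)$. It therefore suffices to prove the normed statement for $Y$, with $\|\widehat u\|=1$ and $\widehat\Lambda$ one-norming.

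\emph{Inclusion $\supseteq$.} Let $f_n\in\widehat\Lambda$ with $|f_n(\widehat u)|\to1$, and suppose $\overline{f_n(\widehat u)}f_n(\widehat z)\to\lambda$. Set $g_n=\overline{f_n(\widehat u)}f_n$ and take a weak$^*$ cluster point $g$ of $(g_n)$ in $B_{Y^*}$ (Banach--Alaoglu). Then $g(\widehat u)=\lim|f_n(\widehat u)|^2=1$ and $\|g\|\le1$, so $g\in J(\widehat u)$. Moreover $g(\widehat z)=\lambda$. Thus every generator lies in $V(Y,\widehat u,\widehat z)$. Since $V$ is convex (it is the image of the weak$^*$ compact convex set $J(\widehat u)$ under evaluation), the convex hull is contained in it as well.

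\emph{Inclusion $\subseteq$, the main obstacle.} Let $C$ denote the convex hull of the limit set. First I check that the limit set is closed and bounded, hence compact, by a diagonal argument; in the finite-dimensional scalar field this makes $C$ compact. Then I argue by separation. Suppose some $g\in J(\widehat u)$ has $g(\widehat z)\notin C$. Then there are $\theta\in\mathbb T$ and $\varepsilon>0$ with
\[
\operatorname{Re}\theta g(\widehat z)\ge \sup_{c\in C}\operatorname{Re}\theta c+\varepsilon .
\]
Replacing $z$ by $\theta z$, we may take $\theta=1$.

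The key tool is the standard formula
\[
\max\operatorname{Re}V(Y,\widehat u,\widehat z)=\lim_{t\to0^+}\frac{\|\widehat u+t\widehat z\|-1}{t}.
\]
Using one-norming, for each $t=1/n$ pick $f_n\in\widehat\Lambda$ with
\[
|f_n(\widehat u+t\widehat z)|>\|\widehat u+t\widehat z\|-t^2 .
\]
Since $\|\widehat u+t\widehat z\|\ge1+t\operatorname{Re}g(\widehat z)$, we get $|f_n(\widehat u)|\to1$. Expanding
\[
|f_n(\widehat u)+tf_n(\widehat z)|^2\le|f_n(\widehat u)|^2+2t\operatorname{Re}\overline{f_n(\widehat u)}f_n(\widehat z)+t^2\|\widehat z\|^2
\]
and using $|f_n(\widehat u)|\le1$ gives
\[
\liminf\operatorname{Re}\overline{f_n(\widehat u)}f_n(\widehat z)\ge\operatorname{Re}g(\widehat z).
\]
Passing to a subsequence along which $\overline{f_n(\widehat u)}f_n(\widehat z)$ converges produces an element of $C$ with real part at least $\operatorname{Re}g(\widehat z)$, which contradicts the separation inequality.

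The delicate point is making this expansion rigorous uniformly in $n$: one must ensure the error terms are $o(t)$, which requires $|f_n(\widehat u)|\le1$ and the choice $t^2$ of slack. The complex case also needs care with the square-root expansion.
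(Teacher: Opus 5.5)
Your proposal follows the paper's proof: pass to the normed quotient $X/N_\eta$ via Proposition~\ref{prop:quotient} and invoke the normed-space result. The paper simply cites that result as \cite[Th.~2.4]{MMQRS}, whereas you reprove it correctly: weak$^*$ cluster points of $\overline{f_n(\widehat u)}f_n$ give $\supseteq$, and separation from the compact convex set $C$, together with near-norming $f_n\in\widehat\Lambda$ at $\widehat u+\frac1n\widehat z$, gives $\subseteq$; the identity $|a+b|^2=|a|^2+2\operatorname{Re}(\bar a b)+|b|^2$ already covers the complex case, so no square-root expansion is needed. One correction: the case $\eta(u)\neq 1$ does not follow by scaling. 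The right-hand side scales linearly under $u\mapsto cu$ ($c>0$) while $V_\eta(X,u,z)$ is invariant, so the two sides differ by the factor $\eta(u)$ and the identity must be read with $\eta(u)=1$, as the paper's reduction to \cite{MMQRS} implicitly does; this is harmless for the later uses, where only whether $0$ lies in the hull matters.
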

	
	\begin{proof}
		Let $N_\eta=\ker\eta$, and write
		$\widehat{u}=q(u)$ and $\widehat{z}=q(z)$. By
		Proposition~\ref{prop:quotient}, the quotient identification preserves
		one-norming subsets and
		\[
		V_\eta (X,u,z)
		=
		V(X/N_\eta,\widehat{u},\widehat{z}).
		\]
		The asserted characterizations therefore follow by applying
		\cite[Th. ~2.4]{MMQRS} to the normed space $X/N_\eta$. 
	\end{proof}

\section{Main results}

\section*{Symmetric points with respect to $v_x(\cdot)$}

\begin{prop}\label{prop:1}
	Let $X$ be a Banach space and let $u\in S_X$. For any $x, y \in X,$
	\[
	x\perp_{v_u}y
	\quad\Longleftrightarrow\quad
	0\in
	\operatorname{conv}
	\{
	\overline{x^*(x)}\,x^*(y):
	x^*\in J(u),\,|x^*(x)|=v_u(x)\}.
	\]
\end{prop}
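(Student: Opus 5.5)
Equip $X$ with the semi-norm $\eta=v_u$ and combine Proposition~\ref{V} with Theorem~\ref{thm:semi-normed-numerical-range}, using the one-norming set $\Lambda=J(u)$. By Proposition~\ref{V}, $x\perp_{v_u}y$ holds if and only if $0\in V_{v_u}(X,x,y)$. We may assume $v_u(x)>0$: if $v_u(x)=0$, then $x\perp_{v_u}y$ holds trivially, and the right-hand set is $\{0\}$ because every $x^*\in J(u)$ satisfies $|x^*(x)|=0$. After normalizing we take $v_u(x)=1$; the right-hand set is invariant under positive scaling of $x$ up to a positive factor, so the condition $0\in\operatorname{conv}$ is unaffected.

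\emph{Step 1: $\Lambda=J(u)$ is admissible.} Each $x^*\in J(u)$ satisfies $|x^*(z)|\le v_u(z)$ for all $z$. Hence $x^*$ vanishes on $N_{v_u}$ and $\|x^*\|_{v_u}\le 1$, and also $\|x^*\|\le1$. So $J(u)\subset B_{X^*}\cap N_{v_u}^\perp$, and it is one-norming for $v_u$ by the definition of $v_u$.

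\emph{Step 2: apply Theorem~\ref{thm:semi-normed-numerical-range}.} This gives
\[
V_{v_u}(X,x,y)=\operatorname{conv}\bigl\{\lim\overline{x_n^*(x)}\,x_n^*(y): x_n^*\in J(u),\ |x_n^*(x)|\to v_u(x)\bigr\}.
\]
The remaining task is to replace these limits by attained values. Here we use weak$^*$ compactness: $J(u)$ is a weak$^*$-closed subset of $B_{X^*}$, hence weak$^*$ compact. Given a sequence $(x_n^*)$ with $|x_n^*(x)|\to1$ and $\overline{x_n^*(x)}x_n^*(y)\to\alpha$, pass to a subnet converging weak$^*$ to some $x_0^*\in J(u)$. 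Evaluation at $x$ and at $y$ is weak$^*$ continuous, so $|x_0^*(x)|=1=v_u(x)$ and $\overline{x_0^*(x)}\,x_0^*(y)=\alpha$. Conversely, any attained value is the limit of a constant sequence. Therefore the set of limits equals $\{\overline{x^*(x)}x^*(y): x^*\in J(u),\ |x^*(x)|=v_u(x)\}$, and taking convex hulls yields the claim.

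\emph{Main obstacle.} The delicate point is that the statement lists functionals $x^*\in J(u)$, which have norm $1$ in $X^*$, while the theorem is phrased in terms of $X_{v_u}^*$. One must check that Step~1 really yields elements of $B_{X_{v_u}^*}$. One must also check that \cite[Th.~2.4]{MMQRS} does not require closedness of the set of limits beyond what compactness provides. If invoking the theorem proves awkward, I would fall back on a direct argument. Let $C$ denote the right-hand set; its non-convexified version is compact by the compactness argument above, so $C$ is compact and convex. If $0\notin C$, separate to get a rotation $\theta$ with $\operatorname{Re}(e^{i\theta}c)>\delta$ for all $c\in C$. Then a compactness argument shows that $v_u(x-te^{-i\theta}y)<v_u(x)$ for small $t>0$, so $x\not\perp_{v_u}y$. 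For the converse, if $0\in C$, then $|x^*(x+\lambda y)|$, averaged appropriately over the convex combination, gives $v_u(x+\lambda y)\ge 1$ for every $\lambda$, so $x\perp_{v_u}y$.
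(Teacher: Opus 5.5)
Your proposal is correct and follows the paper's route: Proposition~\ref{V} combined with Theorem~\ref{thm:semi-normed-numerical-range} for the one-norming set $J(u)$, then weak$^*$-compactness of $J(u)$ to replace the limits by attained values; your explicit check that $J(u)\subset B_{X^*}\cap N_{v_u}^{\perp}$, your separate treatment of the case $v_u(x)=0$, and your normalization $v_u(x)=1$ are more careful than the paper. There is one small slip, in the fallback argument only: since you chose $\theta$ so that $\operatorname{Re}(e^{i\theta}c)>\delta$ on $C$, the perturbation that decreases $v_u$ is $x-te^{i\theta}y$, not $x-te^{-i\theta}y$, and your main argument does not depend on it.
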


\begin{proof}
	Recall that $
	J(u)=\{x^*\in S_{X^*}:x^*(u)=1\} $
	and
	\[
	v_u(x)=\sup_{x^*\in J(u)}|x^*(x)|.
	\]
	Thus, $J(u)\subset B_{X^*}$ is a one-norming set for the seminormed
	space $(X,v_u)$. Following Theorem~\ref{thm:semi-normed-numerical-range},
	we obtain
	\[
	V_{v_u}(X,x,y)
	=
	\operatorname{conv}
	\{
	\lim \overline{x_n^*(x)}\,x_n^*(y):
	x_n^*\in J(u),\,
	\lim |x_n^*(x)|=v_u(x)
	\}.
	\]
The set $J(u)$ is weak$^*$-compact, and the map $
x^* \to \big(x^*(x),x^*(y)\big) $
is weak$^*$-continuous. Hence the sets
\begin{eqnarray*}
&	\{
	\lim \overline{x_n^*(x)}\,x_n^*(y):
	x_n^*\in J(u),\
	\lim |x_n^*(x)|=v_u(x)
	\}\\,
 &\quad	\quad \quad \quad =
	\{
	\overline{x^*(x)}\,x^*(y):
	x^*\in J(u),\
	|x^*(x)|=v_u(x)
	\}.
\end{eqnarray*}
Indeed, let $\{x_n^*\} \subset J(u)$ satisfy $
\lim |x_n^*(x)|=v_u(x). $
By weak$^*$-compactness of $J(u)$, $\{x_n^*\}$ has a weak$^*$-convergent
subnet, say $\{x_{n_\gamma}^*\}$, with limit $x^*\in J(u)$. By the weak$^*$-continuity of
$x^* \to x^*(x)$ and $x^* \to x^*(y)$, we have 
\[
|x^*(x)|
=
\lim_\gamma |x_{n_\gamma}^*(x)|
=
v_u(x),
\]
and
\[
\lim_\gamma
\overline{x_{n_\gamma}^*(x)}\,x_{n_\gamma}^*(y)
=
\overline{x^*(x)}\,x^*(y).
\]
Thus, every element of the set on the left-hand side belongs to the set
on the right-hand side. The converse inclusion is immediate, and hence
the two sets are equal.
Consequently,
\[
V_{v_u}(X,x,y)
=
\operatorname{conv}
\{
\overline{x^*(x)}\,x^*(y):
x^*\in J(u),\
|x^*(x)|=v_u(x)
\}.
\]
	Therefore, by Proposition \ref{V} we have 
	\[
	x\perp_{v_u}y
	\quad\Longleftrightarrow\quad
	0\in
	\operatorname{conv}
	\{
	\overline{x^*(x)}\,x^*(y):
	x^*\in J(u),\
	|x^*(x)|=v_u(x)
	\}.
	\]
\end{proof}

	The $v_u$-Birkhoff-James orthogonality is homogeneous. More precisely,
	for every $\lambda\in\mathbb{K}$,  $	 x \perp_{v_u} y	\quad\Longrightarrow\quad	\lambda  x \perp_{v_u} y .$
	Moreover, for every $x, y \in X$, there exists $\alpha\in\mathbb{K}$ such that $\alpha x+ y \perp_{v_u} x.$
Indeed, since $v_u$ is a continuous, the proof of
\cite[Th.~2.1.13]{MPS} applies in the present setting with the norm
replaced by the semi-norm $v_u$.
Similar to  BJ orthogonality in normed space settings, this semi-norm orthogonality is also not symmetric in nature in general. 
Let $ u \in X.$ An element $ x \in X$ is called a \emph{left symmetric point} with
respect to $v_{u}(\cdot)$ if $
x  \perp_{v_u}  y$ implies 
$ y \perp_{v_u}  x ,$ for any  $ y \in X.$
Similarly, $ x $ is called a \emph{right symmetric point} with
respect to $v_{u}(\cdot)$  if $  y  \perp_{v_u} x$ implies  $x \perp_{v_u} y $ for any $  y \in X. $
A point is called \emph{symmetric} if it is both left symmetric and
right symmetric.

\begin{prop}\label{prop:symmetry-smooth-vx}
	Let $X$ be a Banach space and let $ u \in S_X$ be a smooth point with
	$J( u)=\{ u^*\}$. Then every $u\in X$ is  symmetric  with respect to $v_u(\cdot)$.
\end{prop}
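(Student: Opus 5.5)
The plan is to reduce $v_u$-orthogonality to a condition on the single functional $u^*$, using Proposition~\ref{prop:1}; the statement should be read as saying that every $x\in X$ is symmetric with respect to $v_u(\cdot)$. Since $J(u)=\{u^*\}$, we have
\[
v_u(x)=|u^*(x)|\qquad\text{for every } x\in X.
\]
So $v_u$ is simply the modulus of one linear functional. Then Birkhoff--James orthogonality with respect to $v_u$ is explicit. Let $x,y\in X$.

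\emph{Case $v_u(x)=0$.} Then $x\perp_{v_u}y$ holds trivially. Also $u^*(y+\lambda x)=u^*(y)$ for every $\lambda$, so $v_u(y+\lambda x)=v_u(y)$ and $y\perp_{v_u}x$ holds as well.

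\emph{Case $v_u(x)\neq 0$.} By Proposition~\ref{prop:1}, the set over which we take the convex hull is the singleton $\{\overline{u^*(x)}\,u^*(y)\}$. Hence $x\perp_{v_u}y$ if and only if $u^*(y)=0$.

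Combining the two cases, we obtain the key equivalence: for $x,y\in X$,
\[
x\perp_{v_u}y \iff u^*(x)=0 \ \text{or}\ u^*(y)=0.
\]
This condition is visibly symmetric in $x$ and $y$. As a direct check, it can also be read off from $|u^*(x)+\lambda u^*(y)|\ge |u^*(x)|$ for all $\lambda$, without invoking Proposition~\ref{prop:1}.

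Symmetry now follows at once. If $x\perp_{v_u}y$, then $u^*(x)=0$ or $u^*(y)=0$, so $y\perp_{v_u}x$. The same argument gives the converse direction. Hence every $x$ is both left and right symmetric.

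The only step needing care is the degenerate case $v_u(x)=0$. There the convex hull in Proposition~\ref{prop:1} is taken over the set $\{x^*\in J(u): |x^*(x)|=0\}$, which is all of $J(u)$, and it yields $\{0\}$. The conclusion is consistent with the direct computation, so the case causes no difficulty.
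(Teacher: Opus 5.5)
Your proof is correct and follows essentially the same route as the paper: both reduce $v_u$ to $|u^*(\cdot)|$ and derive the symmetric criterion $x\perp_{v_u}y \iff u^*(x)=0$ or $u^*(y)=0$. The only difference is that you obtain the nondegenerate case from Proposition~\ref{prop:1}, with the direct computation as a backup, while the paper argues directly by choosing $\lambda=-u^*(x)/u^*(y)$.
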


\begin{proof}

	Since $u$ is smooth, $v_u(z)=|u^*(z)|$ for every $z\in X$. Thus, for
	any $x,y\in X$,
	\[
	x\perp_{v_u}y	\iff	|u^*(x)+\lambda u^*(y)|	\geq |u^*(x)|
	\quad \forall \lambda \in \mathbb K.
	\]
	If $u^*(x)\neq0$, then necessarily $u^*(y)=0$, since otherwise taking $
	\lambda=-\frac{u^*(x)}{u^*(y)} $
	gives a contradiction. Hence
	\[
	x\perp_{v_u}y	\iff	u^*(x)=0\,\text{or}\,u^*(y)=0.
	\]
	Since the condition
	\[
	u^*(x)=0\, \text{or}\,,u^*(y)=0
	\]
	is symmetric in $x$ and $y$, we have
	\[
	x\perp_{v_u}y	\iff	y\perp_{v_u}x.
	\]
	Therefore, every $x\in X$ is both left symmetric and right symmetric
	with respect to $v_u(\cdot)$.

\end{proof}

\begin{remark}
	Let $u\in S_X$. 
	If $
	B_{X^*}
	=
	\overline{\operatorname{conv}}^{w^*}
	\{\lambda x^*:x^*\in J(u),\,|\lambda|=1\}, $
then $J(u)$ is one-norming for the Banach space $(X,\|\cdot\|)$,i.e., for any $x \in X,$  $
\|x\|=\sup_{x^*\in J(u)}|x^*(x)|.$ Consequently
	\[
	v_u(u)=\|u\|,\qquad u\in X.
	\]
	Thus, in this case, the $v_u$-Birkhoff-James orthogonality coincides
	with the usual Birkhoff-James orthogonality on $X$. In particular, the
	left and right symmetric points with respect to $v_u(\cdot)$ are exactly
	the left and right symmetric points of $X$ under its original norm.
\end{remark}

\begin{remark}
	The spaces $\ell_1^n$ and $\ell_\infty^n$ provide natural instances of
	the preceding observation. Indeed, for every extreme point $u$ of the
	unit ball of either space,
	\[
	v_u(\cdot)=\|\cdot\|.
	\]
	Hence, the $v_u$-Birkhoff-James orthogonality and the corresponding
	notions of left and right symmetric points coincide with those induced
	by the original norm. Thus, the known characterizations of symmetric
	points in \cite{BRS, GSP17}  in these spaces apply directly.
	
	\begin{itemize}
		\item In $\ell_1^n$, there is no nonzero left symmetric point, whereas
		the nonzero right symmetric points are precisely $\{\lambda e_i: 1 \leq i \leq n, \lambda \in \mathbb{K}\},$i.e., the scalar multiples
		of the canonical basis vectors.
		
		\item In $\ell_\infty^n$, the left symmetric points are precisely $\{\lambda e_i: 1 \leq i \leq n, \lambda \in \mathbb{K}\},$ whereas the nonzero
		right symmetric points are precisely the scalar multiples of the
		extreme points of $B_{\ell_\infty^n}$.
	\end{itemize}
\end{remark}

In the general Banach space setting, identifying symmetric points with respect to this semi-norm orthogonality appears to be a rather difficult problem. Nevertheless, in the real Banach space setting, an abstract characterization can be obtained along the same lines as the corresponding characterization in the normed-space setting. The proof follows essentially the same arguments as those used in \cite[Th. 2.1 and 2.2]{SRBB}, and is therefore omitted.

\begin{theorem}
	Let $X$ be a real Banach space and $x \in X. $ Then $u \in X$ is left symmetric with respect to $v_u(.)$ if and only if the following holds
	\begin{itemize}
			\item[(i)]  $ \operatorname{sgn}( x^*(w))  x^*(u) \geq 0,$ for some  $x^* \in J(x), |x^*(w)|= v_u(w)$  $\implies$ $ \operatorname{sgn}( x^*(u)) x^*(w) \geq 0,$ for some  $x^* \in J(x), |x^*(u)|= v_u(u)$.
		
		\item[(ii)] $  \operatorname{sgn}( x^*(u)) x^*(w) \leq 0,$ for some $x^* \in J(x), |x^*(w)|= v_u(w)$ $ \implies$  $  \operatorname{sgn}( x^*(w)) x^*(u) \leq  0,$ for some  $x^* \in J(x), |x^*(u)|= v_u(u)$.
	\end{itemize}
\end{theorem}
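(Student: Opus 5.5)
Throughout I read the statement with the evident normalization: $u\in S_X$ is fixed, the point under study is $x\in X$, $w$ ranges over $X$, and the functionals are taken from $J(u)$ (written $J(x)$ in the statement). The plan is to turn both $x\perp_{v_u}w$ and $w\perp_{v_u}x$ into pairs of one-sided sign conditions, and then run the argument of \cite[Th.~2.1 and 2.2]{SRBB} with $\|\cdot\|$ replaced by $v_u$.

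\emph{Step 1 (sign form of orthogonality).} By Proposition~\ref{prop:1}, in the real case $x\perp_{v_u}w$ holds if and only if
\[
0\in\operatorname{conv}\{\operatorname{sgn}(x^*(x))\,x^*(w): x^*\in J(u),\ |x^*(x)|=v_u(x)\}.
\]
This set is the continuous image of a weak$^*$-compact set, so it is a compact subset of $\mathbb R$. Its convex hull is a closed interval, which contains $0$ exactly when the set has an element $\ge 0$ and an element $\le 0$. Hence
\[
x\perp_{v_u}w \iff A^+(x,w)\ \text{and}\ A^-(x,w),
\]
where $A^{\pm}(x,w)$ means that some $x^*\in J(u)$ with $|x^*(x)|=v_u(x)$ satisfies $\pm\operatorname{sgn}(x^*(x))\,x^*(w)\ge 0$. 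In the same way, $w\perp_{v_u}x$ holds if and only if $A^+(w,x)$ and $A^-(w,x)$. Conditions (i) and (ii) are then exactly implications between these half-conditions, one for each sign.

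\emph{Step 2 (sufficiency).} Assume (i) and (ii), and suppose $x\perp_{v_u}w$, i.e.\ $A^+(x,w)$ and $A^-(x,w)$ hold. I apply (i) and (ii) to the pair $(x,w)$, and where needed to the pair $(x,-w)$. Replacing $w$ by $-w$ interchanges $A^+$ and $A^-$ in both positions, because $\operatorname{sgn}(x^*(-w))=-\operatorname{sgn}(x^*(w))$ and the norming sets are unchanged. This should yield both $A^+(w,x)$ and $A^-(w,x)$, hence $w\perp_{v_u}x$ by Step 1.

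\emph{Step 3 (necessity).} Assume $x$ is left symmetric and that the hypothesis of (i) holds for some $w$, while its conclusion fails. Then every norming $x^*$ for $x$ gives $\operatorname{sgn}(x^*(x))\,x^*(w)<0$. Following \cite{SRBB}, I perturb $w$ to $w_t=w+tx$ and, if needed, to $w+sx$ with an auxiliary scalar. For a suitable $t$ (found from the remark after Proposition~\ref{prop:1}, which gives $\alpha$ with $\alpha x+w\perp_{v_u}x$, or from an intermediate value argument), I aim to get $x\perp_{v_u}w_t$ while $w_t\not\perp_{v_u}x$, contradicting left symmetry. Condition (ii) is obtained the same way with $w$ replaced by $-w$.

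The main obstacle is Step 3. One has to control how the set of norming functionals $\{x^*\in J(u):|x^*(w_t)|=v_u(w_t)\}$ changes with $t$. This set is only upper semicontinuous in $t$, and unlike the normed case, $v_u$ may vanish on $w_t$. My plan is to use weak$^*$-compactness of $J(u)$ together with a limiting argument over $t$, as in the proof of Proposition~\ref{prop:1}, to pass the sign information to a limit functional. I would treat the degenerate case $v_u(w_t)=0$ separately: there $w_t\perp_{v_u}x$ holds trivially, and that case has to be ruled out by the choice of $t$.
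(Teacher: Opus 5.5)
\textbf{Gap: the direction of the implications in (i)--(ii).} Your Step 1 is correct. Your plan, to reduce $\perp_{v_u}$ to the one-sided conditions $A^{\pm}$ and then transplant \cite{SRBB}, is also what the paper intends; it omits the proof with that reference. The problem is that you never fix which way the implications in (i)--(ii) go, and the direction you actually use is the wrong one.

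In Step 3, the failure of the conclusion of (i) is ``every norming $x^*$ for $x$ gives $\operatorname{sgn}(x^*(x))x^*(w)<0$''. So you are reading (i) literally as $A^{+}(w,x)\Rightarrow A^{+}(x,w)$. That is the right-symmetry condition, and it fails for left symmetric points. In real $\ell_\infty^2$ with $u=(1,1)$ one has $v_u=\|\cdot\|_\infty$, and $x=e_1$ is left symmetric. Take $w=(-\tfrac12,1)$. The only norming functional of $w$ in $J(u)$ is $e_2^*$, so $A^{+}(w,x)$ holds with value $0$. The only norming functional of $x$ is $e_1^*$, so $A^{+}(x,w)$ fails with value $-\tfrac12$. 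Hence no choice of $t$ in $w+tx$ can produce your contradiction (here $w+\tfrac12x=e_2$, and $e_1,e_2$ are mutually orthogonal).

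Step 2 fails for the same reason. Substituting $-w$ only turns $A^{+}(w,x)\Rightarrow A^{+}(x,w)$ into $A^{-}(w,x)\Rightarrow A^{-}(x,w)$, so you can never pass from $A^{\pm}(x,w)$ to $A^{\pm}(w,x)$. Indeed $x=(1,1)$ satisfies the reversed conditions but is not left symmetric: $x\perp_{v_u}e_1$ while $e_1\not\perp_{v_u}x$.

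\textbf{The correct reading and how to finish.} The conditions must run the other way: for every $w$, $A^{+}(x,w)\Rightarrow A^{+}(w,x)$ and $A^{-}(x,w)\Rightarrow A^{-}(w,x)$. The second is the first applied to $-w$. With this reading, sufficiency is immediate from Step 1. Necessity needs no semicontinuity in $t$ and no limiting argument:

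\begin{itemize}
\item Suppose $A^{+}(x,w)$ holds but $A^{+}(w,x)$ fails. Then $v_u(x)>0$ and $v_u(w)>0$, since otherwise $A^{+}(w,x)$ holds trivially.
\item Let $\psi(\lambda)=v_u(w+\lambda x)$. Then $\psi(\lambda_0)<\psi(0)$ for some $\lambda_0>0$. To see this, note that the weak$^*$-continuous map $x^*\mapsto x^*(w)x^*(x)$ is $\le -c<0$ on a weak$^*$-neighbourhood of the norming set of $w$ in $J(u)$. Off that neighbourhood, $|x^*(w)|\le v_u(w)-\delta$ by compactness.
\item Put $M=\max\{\operatorname{sgn}(x^*(x))x^*(w)\}\ge 0$ over the norming functionals of $x$, and $s=M/v_u(x)\ge 0$. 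Then $0$ belongs to the Step 1 set for the pair $(x,w-sx)$, so $x\perp_{v_u}w-sx$.
\item Convexity of $\psi$ on $[-s,\lambda_0]$ gives $\psi(\lambda_0)<\psi(-s)$. That is, $w-sx\not\perp_{v_u}x$, contradicting left symmetry.
\end{itemize}

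This also removes your degenerate case, since $v_u(w-sx)=\psi(-s)>\psi(\lambda_0)\ge 0$.
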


\begin{theorem}\label{v-right}
	Let $X$ be a real Banach space and $x \in X. $ Then $u \in X$ is right symmetric with respect to $v_u(.)$ if and only if the following holds
	\begin{itemize}
		\item[(i)]  $ \operatorname{sgn}( x^*(w)) x^*(w) \geq 0,$ for some  $x^* \in J(x), |x^*(u)|= v_u(u)$  $\implies$  $ \operatorname{sgn}( x^*(u))  x^*(u) \geq 0,$ for some  $x^* \in J(x), x^*(w)= v_u(w)$.
		
		\item[(ii)] $  \operatorname{sgn}( x^*(u))x^*(w) \leq  0,$ for some $x^* \in J(x), |x^*(u)|= v_u(u)$  $\implies$  $   \operatorname{sgn}( x^*(w))x^*(u) \leq  0,$ for some $x^* \in J(x), x^*(w)= v_u(w)$.
	\end{itemize}
\end{theorem}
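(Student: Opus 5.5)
Throughout, we read the statement with the natural normalization. We fix $u\in S_X$, write $J(u)$ for the support functionals, and ask when a point $x\in X$ is right symmetric with respect to $v_u(\cdot)$. For $z\in X$ we set
\[
M(z)=\{x^*\in J(u): |x^*(z)|=v_u(z)\}.
\]
The plan is to translate both sides of the implication ``$w\perp_{v_u}x \Rightarrow x\perp_{v_u}w$'' into sign conditions via Proposition~\ref{prop:1}, and then to decouple the resulting conjunctions into the two implications (i) and (ii).

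\textbf{Step 1.} In the real case, Proposition~\ref{prop:1} says that $w\perp_{v_u}x$ if and only if $0\in\operatorname{conv}\{\operatorname{sgn}(x^*(w))\,x^*(x): x^*\in M(w)\}$. Since $\overline{x^*(w)}=x^*(w)=\operatorname{sgn}(x^*(w))\,v_u(w)$ on $M(w)$, we may divide by $v_u(w)$ when it is positive; the degenerate case $v_u(w)=0$ gives orthogonality trivially and is handled separately. A subset of $\mathbb R$ has $0$ in its convex hull exactly when it contains both a nonnegative and a nonpositive element. Hence
\[
w\perp_{v_u}x \iff \exists\, x_1^*,x_2^*\in M(w):\ \operatorname{sgn}(x_1^*(w))\,x_1^*(x)\ge 0,\ \ \operatorname{sgn}(x_2^*(w))\,x_2^*(x)\le 0 .
\]
The analogous description, with the roles of $x$ and $w$ swapped and $M(x)$ in place of $M(w)$, characterizes $x\perp_{v_u}w$. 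Consequently, right symmetry of $x$ is equivalent to the following: for every $w$, the existence of both a ``$\ge$'' witness and a ``$\le$'' witness in $M(w)$ forces the existence of both kinds of witness in $M(x)$.

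\textbf{Step 2.} The sufficiency direction is immediate. Assume (i) and (ii) hold and $w\perp_{v_u}x$. Step 1 supplies both witnesses in $M(w)$. Condition (i) then yields a ``$\ge$'' witness in $M(x)$, and condition (ii) yields a ``$\le$'' witness in $M(x)$. By Step 1, $x\perp_{v_u}w$.

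\textbf{Step 3.} Necessity is the main obstacle. Here we must show that right symmetry forces each one-sided implication on its own, even when $M(w)$ contains only one type of witness, so that $w$ need not be orthogonal to $x$.
\begin{itemize}
\item[(a)] Following \cite[Th.~2.2]{SRBB}, we suppose, say, that (i) fails for some $w$. Thus $M(w)$ contains a ``$\ge$'' witness while every functional in $M(x)$ gives a strictly negative sign.
\item[(b)] We then replace $w$ by a perturbation $w_\alpha=w+\alpha x$. By the remark following Proposition~\ref{prop:1}, there exists $\alpha$ with $w_\alpha\perp_{v_u}x$.
\item[(c)] The aim is to choose the sign or size of $\alpha$ so that the strict negativity on $M(x)$ persists for $w_\alpha$. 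Right symmetry would then give $x\perp_{v_u}w_\alpha$, which contradicts the persisting strict negativity.
\item[(d)] To control $M(w_\alpha)$ and $M(x)$ under this perturbation, we use weak$^*$-compactness of $J(u)$, exactly as in the proof of Proposition~\ref{prop:1}. This gives that the strict sign condition on the compact set $M(x)$ is stable in a neighbourhood.
\item[(e)] We also use the sign flip $w\mapsto -w$. It converts (ii) into (i), so only one case needs to be carried out.
\end{itemize}
We expect that keeping the strict inequality valid along the whole family $w_\alpha$ is where the real care is needed.
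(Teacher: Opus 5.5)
Your reading of the garbled statement is the right one; it is exactly how the result is invoked in the proof of Theorem~\ref{right}. Step~1, the sufficiency in Step~2, and the reduction of (ii) to (i) via $w\mapsto -w$ are all correct. The paper omits this proof and defers to the arguments of Sain--Roy--Bagchi--Balestro, and your overall strategy (perturb $w$ along $x$ until it becomes orthogonal to $x$, then invoke right symmetry) is sound.

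The necessity argument, however, has a genuine gap. After (a) you never use the hypothesis that $M(w)$ contains a ``$\ge$'' witness. Without that hypothesis (i) is vacuous, so (b)--(d) cannot produce a contradiction as written. The coefficient $\alpha$ in (b) is not yours to choose: it is constrained by $w_\alpha\perp_{v_u}x$, and the remark after Proposition~\ref{prop:1} gives no control on its size or sign. So the local stability near $\alpha=0$ that you get from weak$^*$-compactness in (d) is not what is needed. Moreover, $M(x)$ does not move with $\alpha$, and $M(w_\alpha)$ plays no role once $w_\alpha\perp_{v_u}x$ is known. What is missing is the fact that the orthogonalizing coefficient can be taken \emph{nonpositive}.

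Here is the missing step. If $v_u(x)=0$, then $M(x)=J(u)$ and every sign vanishes, so (i) and (ii) hold trivially; assume $v_u(x)>0$. Pick $x_0^*\in M(w)$ with $\operatorname{sgn}(x_0^*(w))\,x_0^*(x)\ge 0$. Then for every $\lambda\ge 0$,
\[
v_u(w+\lambda x)\ \ge\ \operatorname{sgn}(x_0^*(w))\,x_0^*(w+\lambda x)\ =\ v_u(w)+\lambda\,\operatorname{sgn}(x_0^*(w))\,x_0^*(x)\ \ge\ v_u(w).
\]
The function $\phi(\lambda)=v_u(w+\lambda x)$ is continuous and satisfies $\phi(\lambda)\ge |\lambda|\,v_u(x)-v_u(w)$. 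Hence it attains its minimum over $(-\infty,0]$ at some $\alpha_0\le 0$, and $\phi(\alpha_0)\le\phi(0)\le\phi(\lambda)$ for all $\lambda\ge 0$. Thus $\alpha_0$ is a global minimizer, which says exactly that $w+\alpha_0x\perp_{v_u}x$, directly from the definition. Right symmetry then gives $x\perp_{v_u}w+\alpha_0x$. But for every $x^*\in M(x)$,
\[
\operatorname{sgn}(x^*(x))\,x^*(w+\alpha_0 x)=\operatorname{sgn}(x^*(x))\,x^*(w)+\alpha_0\,v_u(x)<0,
\]
because the failure of (i) makes the first term negative and $\alpha_0\le 0$. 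So $M(x)$ contains no ``$\ge$'' witness for $w+\alpha_0x$, and by your Step~1 $x\not\perp_{v_u}w+\alpha_0x$, a contradiction.

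No compactness or neighbourhood argument is needed. The relevant quantity is affine in $\alpha$ with slope $v_u(x)\ge 0$, so strict negativity persists for \emph{all} $\alpha\le 0$, not just near $0$. The only thing to secure is that an orthogonalizing $\alpha$ exists in that half-line, and that is precisely where the ``$\ge$'' witness in $M(w)$ enters.
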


\section*{Symmetric functions in $C(K,X)$}

This section is devoted to the study of symmetric points in
$C(K,X)$ with respect to the numerical-radius semi-norm. We begin by
recording the corresponding characterization of Birkhoff-James
orthogonality, which will be used throughout the section.
Recall that
\[
\Pi(K,X) = \{(k,x^*):k\in K,\,x^*\in J(k)\},
\]
where $J(k) = \{x^*\in S_{X^*}:x^*(k)=1\},$ and 
\[ 
\|f\|_{w} = \sup \{ |x^*(f(k))|: (k,x^*)\in\Pi(K,X) \},
\quad f\in C(K,X).
\]
For each $(k,x^*)\in\Pi(K,X)$, define the evaluation functional
\[
x^*\otimes\delta_k:C(K,X)\longrightarrow\mathbb K
\]
by $
(x^*\otimes\delta_k)(f) = x^*(f(k)),$ for any $
f\in C(K,X). $
Let $
\Lambda =\{ x^*\otimes\delta_k :(k,x^*)\in\Pi(K,X) \}. $ Since
\[
\|f\|_{w} = \sup_{\psi\in\Lambda} |\psi(f)|,
\quad f\in C(K,X),
\]
it follows that $\Lambda$ is a one-norming subset of
$B_{(C(K,X))_{\|\cdot\|_w}^{\,*}}$.  For every $f\in C(K,X)$, define the \emph{numerical-radius norm attainment set} of $f$ by
\[
N_w(f)
=
\{
k\in K:
|x^*(f(k))|=\|f\|_w
\text{ for some }x^*\in J(k)\}.
\]

\begin{prop}
	Let $f\in C(K,X)$. If $	N_w(f)=\{k_0\}, $ 	then $	\|f\|_w =	v_{k_0}(f(k_0)). $
\end{prop}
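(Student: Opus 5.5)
Since $N_w(f)=\{k_0\}$, by definition there is some $x_0^*\in J(k_0)$ with $|x_0^*(f(k_0))|=\|f\|_w$. This gives the lower bound
\[
v_{k_0}(f(k_0))=\sup_{x^*\in J(k_0)}|x^*(f(k_0))|\ \ge\ |x_0^*(f(k_0))|=\|f\|_w .
\]
For the reverse inequality we observe that every $x^*\in J(k_0)$ satisfies $(k_0,x^*)\in\Pi(K,X)$. Hence each term $|x^*(f(k_0))|$ in the supremum defining $v_{k_0}(f(k_0))$ is one of the terms in the supremum defining $\|f\|_w$, and so $v_{k_0}(f(k_0))\le \|f\|_w$. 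Putting the two inequalities together gives the equality.

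In fact the argument shows more: the inequality $v_k(f(k))\le\|f\|_w$ holds for every $k\in K$, and equality holds at every $k\in N_w(f)$. The hypothesis that $N_w(f)$ is the singleton $\{k_0\}$ is used only through the nonemptiness statement $k_0\in N_w(f)$.

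The only step needing care is making sure the supremum in $v_{k_0}$ is attained, so that the definition of $N_w(f)$ really does produce a functional. Here this is automatic, because $N_w(f)$ is defined by the existence of an $x^*\in J(k)$ with $|x^*(f(k))|=\|f\|_w$. If one wanted to justify why $N_w(f)$ is nonempty in general, one would use weak$^*$-compactness, as in the proof of Proposition~\ref{prop:1}. One takes a maximizing net $(k_\gamma,x_\gamma^*)$ in $\Pi(K,X)$, passes to a subnet with $k_\gamma\to k$ in $K$ and $x^*_\gamma\to x^*$ weak$^*$. Then $x^*(k)=1$ and $\|x^*\|\le1$, so $x^*\in J(k)$. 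Finally, $|x^*_\gamma(f(k_\gamma))-x^*_\gamma(f(k))|\le\|f(k_\gamma)-f(k)\|\to0$, so $|x^*(f(k))|=\|f\|_w$. This is not needed for the proposition as stated.
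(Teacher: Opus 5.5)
Your proof is correct and matches the paper's argument: the lower bound $v_{k_0}(f(k_0))\ge\|f\|_w$ comes from the functional witnessing $k_0\in N_w(f)$, and the upper bound comes from $\{k_0\}\times J(k_0)\subseteq\Pi(K,X)$. Your side remarks are also correct, namely that only $k_0\in N_w(f)$ is used and that $N_w(f)\neq\emptyset$ follows from the compactness argument.
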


\begin{proof}
	Since $k_0\in N_w(f)$, there exists $x^*\in J(k_0)$ such that $	|x^*(f(k_0))|	=\|f\|_w.$ 	Hence $	v_{k_0}(f(k_0))	=	\sup_{y^*\in J(k_0)}	|y^*(f(k_0))|	\ge	\|f\|_w.$
	On the other hand
	\[
	v_{k_0}(f(k_0))	=	\sup_{y^*\in J(k_0)}	|y^*(f(k_0))|\leq	\sup_{(k,y^*)\in\Pi(K,X)}
	|y^*(f(k))|	=	\|f\|_w.
	\]
	Therefore, $
	v_{k_0}(f(k_0))	=\|f\|_w. $
\end{proof}

We may therefore apply Theorem~2.2 to obtain the following
characterization of Birkhoff-James orthogonality.

\begin{theorem}\label{BJ-C(K,X)}
	Let $f, g \in C(K,X)$ where $K$ be a compact subset of $S_X.$
 Then $
	f\perp_w g $
	if and only if
	\[
	0	\in\operatorname{conv}	\{  \overline{x^*(f(k))}x^*(g(k)):	(t, x^*)\in\Pi(K,X),	|x^*(f(k))|	=	\|f\|_{w} \}.
	\]
	
\end{theorem}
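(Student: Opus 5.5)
The plan is to follow the same route as in Proposition~\ref{prop:1}, now for the seminormed space $(C(K,X),\|\cdot\|_w)$ with the one-norming set $\Lambda=\{x^*\otimes\delta_k:(k,x^*)\in\Pi(K,X)\}$. First we check that $\Lambda$ satisfies the hypothesis of Theorem~\ref{thm:semi-normed-numerical-range}. If $\|h\|_w=0$, then $|x^*(h(k))|\le\|h\|_w=0$ for every $(k,x^*)\in\Pi(K,X)$. Hence each $x^*\otimes\delta_k$ vanishes on $N_{\|\cdot\|_w}$, and it is bounded by $1$ in the $\|\cdot\|_w$-dual norm. We may rescale so that $\|f\|_w=1$ (the case $\|f\|_w=0$ is trivial, since then $f\perp_w g$ always holds and the right-hand set contains $0$). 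Applying Theorem~\ref{thm:semi-normed-numerical-range} with $u=f$, $z=g$ gives
\[
V(f,g)=\operatorname{conv}\{\lim \overline{x_n^*(f(k_n))}\,x_n^*(g(k_n)) : (k_n,x_n^*)\in\Pi(K,X),\ \lim|x_n^*(f(k_n))|=\|f\|_w\}.
\]
Proposition~\ref{V} then says that $f\perp_w g$ if and only if $0\in V(f,g)$.

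The key step is to remove the limits, as was done in Proposition~\ref{prop:1}. For this we use that $\Pi(K,X)$ is compact in $K\times (B_{X^*},w^*)$, where $K$ carries the norm topology. It is a subset of the compact product, and it is closed: if $k_\gamma\to k$ in norm and $x^*_\gamma\to x^*$ weak$^*$ with $x_\gamma^*(k_\gamma)=1$, then
\[
|x_\gamma^*(k_\gamma)-x^*(k)|\le\|k_\gamma-k\|+|x_\gamma^*(k)-x^*(k)|\to0,
\]
so $x^*(k)=1$. Since $\|x^*\|\le 1$, this forces $\|x^*\|=1$, and so $x^*\in J(k)$.

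Next, the map $(k,x^*)\mapsto x^*(f(k))$ is continuous on this compact set, by the same estimate together with the norm continuity of $f$. The same holds for $g$.

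Now take a sequence $(k_n,x_n^*)$ realizing a limit in the set above. It has a convergent subnet with limit $(k,x^*)\in\Pi(K,X)$. Along this subnet, $|x^*(f(k))|=\|f\|_w$ and $\overline{x^*(f(k))}\,x^*(g(k))$ equals the given limit. The reverse inclusion is immediate by taking constant sequences. Hence the two generating sets coincide, which proves the stated criterion.

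The main obstacle is this joint continuity and closedness argument. It requires the norm topology on $K$ and the weak$^*$ topology on the functionals to interact correctly, which is exactly what the mixed estimate above supplies. Once that is in place, the rest is a direct transcription of the proof of Proposition~\ref{prop:1}.
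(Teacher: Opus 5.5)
Your proposal is correct and follows the paper's proof. Both arguments apply Theorem~\ref{thm:semi-normed-numerical-range} to the one-norming set $\Lambda=\{x^*\otimes\delta_k\}$, remove the limits using compactness of $\Pi(K,X)$ in the norm $\times$ weak$^*$ topology together with continuity of $(k,x^*)\mapsto x^*(f(k))$, and conclude with Proposition~\ref{V}. You also supply details the paper only asserts or leaves implicit: the closedness of $\Pi(K,X)$, the check that $\Lambda$ annihilates $\ker\|\cdot\|_w$ and lies in the dual unit ball, the harmless rescaling of $f$, and the degenerate case $\|f\|_w=0$.
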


\begin{proof}
	Set
	\[
	\Lambda=	\{x^*\otimes\delta_k:(k,x^*)\in\Pi(K,X)\}.
	\]
	As observed above, $\Lambda$ is a one-norming subset of
	$B_{(C(K,X))_{\|\cdot\|_w}^{\,*}}$. By Theorem~\ref{thm:semi-normed-numerical-range},
\begin{eqnarray*}
\begin{aligned}
	&V_w(C(K,X),f,g)\\   &\qquad = \operatorname{conv} \{ 	\lim \overline{x_n^*(f(k_n))}\,x_n^*(g(k_n)): 	(k_n,x_n^*)\in\Pi(K,X),\	\lim |x_n^*(f(k_n))|=\|f\|_w \}.
\end{aligned}
\end{eqnarray*}
Observe that $\Pi(K,X)$ is compact with respect to the product of the
norm topology on $K$ and the weak$^*$-topology on $B_{X^*}$, and the
map	 $	(k,x^*)\longmapsto 	\big(x^*(f(k)),x^*(g(k))\big)$ 	is continuous. 	Hence the limiting set above is precisely
	\begin{eqnarray*}
		\begin{aligned}
			&V_w(C(K,X),f,g)\\   &\qquad = \operatorname{conv} \{ 	\lim \overline{x^*(f(k))}\,x^*(g(k)): 	(k,x^*)\in\Pi(K,X),\	\lim |x^*(f(k))|=\|f\|_w \}.
		\end{aligned}
	\end{eqnarray*}
	The result now follows from Proposition~\ref{V}.
\end{proof}

We need the following classical result from point-set topology which will be used repeatedly in this section.

	\begin{lemma}\cite[Cor. 4.3]{Dugundji}\label{Urysohn}
	Let $K$ be a perfectly normal space. Let $A$ and $B$ be disjoint closed subsets of $K$. Then there exists a continuous map $\varphi : K \to [0, 1] $ such that $\varphi^{-1}(0)= A$ and $\varphi^{-1}(1)= B.$
\end{lemma}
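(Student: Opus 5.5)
The plan is to reduce the statement to the fact that in a perfectly normal space every closed set is a \emph{zero set}, and then to combine two such functions by the usual quotient formula. Concretely, for each closed set $F\subseteq K$ I will produce a continuous $g_F:K\to[0,1]$ with $g_F^{-1}(0)=F$. If $F=\emptyset$ I take $g_F\equiv 1$. Otherwise I use perfect normality: $F$ is a $G_\delta$, so $F=\bigcap_{n\ge1}U_n$ with each $U_n$ open. By the ordinary Urysohn lemma (applicable since $K$ is normal and $F$, $K\setminus U_n$ are disjoint closed sets), there is a continuous $u_n:K\to[0,1]$ with $u_n|_F=0$ and $u_n|_{K\setminus U_n}=1$. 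I then set
\[
g_F=\sum_{n\ge1}2^{-n}u_n .
\]
This series converges uniformly, so $g_F$ is continuous with values in $[0,1]$, and it vanishes on $F$. Conversely, if $x\notin F$, then $x\notin U_n$ for some $n$, so $u_n(x)=1$ and hence $g_F(x)\ge 2^{-n}>0$. Thus $g_F^{-1}(0)=F$.

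Next, with $A$ and $B$ the given disjoint closed sets, I define
\[
\varphi=\frac{g_A}{g_A+g_B}.
\]
The denominator never vanishes: $g_A(x)=g_B(x)=0$ would force $x\in A\cap B=\emptyset$. Hence $\varphi$ is continuous, and since both $g_A$ and $g_B$ are nonnegative, it takes values in $[0,1]$. It remains to identify the level sets. We have $\varphi(x)=0$ iff $g_A(x)=0$ iff $x\in A$, and $\varphi(x)=1$ iff $g_B(x)=0$ iff $x\in B$. Therefore $\varphi^{-1}(0)=A$ and $\varphi^{-1}(1)=B$, as required.

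The only substantive step is the zero-set construction, which is exactly where perfect normality enters; everything else is formal. In the situation where the lemma will actually be used, $K$ is a compact subset of the metric space $S_X$, so this step is immediate: one may simply take $g_F(x)=\operatorname{dist}(x,F)$, which is continuous and vanishes exactly on the closed set $F$. With this choice, $\varphi(x)=\operatorname{dist}(x,A)/(\operatorname{dist}(x,A)+\operatorname{dist}(x,B))$ gives the lemma directly for all applications in this section.
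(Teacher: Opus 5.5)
Your argument is correct, and it is the standard proof of the cited result. The paper gives no proof of its own, only the reference to Dugundji, whose argument runs the same way: first show that every closed set is a zero set via $\sum 2^{-n}u_n$, then take $\varphi=g_A/(g_A+g_B)$. One small caveat concerns your closing remark: $\operatorname{dist}(x,F)$ only works for $F\neq\emptyset$, so if $A$ or $B$ is empty you should keep $g_\emptyset\equiv 1$ from your general construction.
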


We now present our first main result, giving a characterization of
the $nr$-left symmetric functions in $C(K,X)$.

\begin{theorem}\label{left}
	Let $X$ be a Banach space and $K$ be a compact subset of $S_X.$  Let $f \in C(K, X).$ Then $f$ is $nr$-left symmetric if and only if the following holds: 
	\begin{itemize}
		\item[(i)] there exists $k_0 \in K$ such that for any $k \neq k_0$ and for any $x^* \in J(k)$, $x^*(f(k))=0.$
		\item[(ii)] $f(k_0)$ is left symmetric with respect to $v_{k_0}(\cdot).$
	\end{itemize}
\end{theorem}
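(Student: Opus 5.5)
\emph{Plan.} Throughout I use the characterization of $\perp_w$ in Theorem~\ref{BJ-C(K,X)}, the pointwise characterization in Proposition~\ref{prop:1}, and the decomposition
\[
\|h\|_w=\sup_{k\in K} v_k(h(k)), \qquad h\in C(K,X).
\]
The degenerate case $\|f\|_w=0$ is trivial in both directions: $f\perp_w g$ and $g\perp_w f$ hold for every $g$, and (i), (ii) hold for any $k_0$. So assume $\|f\|_w>0$.

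\emph{Sufficiency.} Assume (i) and (ii). By (i), $f$ is ``invisible'' at every $k\neq k_0$, that is, $v_k(f(k))=0$. Hence $N_w(f)=\{k_0\}$ and $\|f\|_w=v_{k_0}(f(k_0))$. The set in Theorem~\ref{BJ-C(K,X)} then only involves functionals in $J(k_0)$. By Proposition~\ref{prop:1}, $f\perp_w g$ is therefore equivalent to $f(k_0)\perp_{v_{k_0}} g(k_0)$, and (ii) gives $g(k_0)\perp_{v_{k_0}} f(k_0)$. Since $x^*(f(k))=0$ for $k\ne k_0$ and $x^*\in J(k)$,
\[
\|g+\lambda f\|_w=\max\Big(v_{k_0}(g(k_0)+\lambda f(k_0)),\ \sup_{k\neq k_0}v_k(g(k))\Big)\ \ge\ \max\Big(v_{k_0}(g(k_0)),\ \sup_{k\neq k_0}v_k(g(k))\Big)=\|g\|_w .
\]
Thus $g\perp_w f$.

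\emph{Necessity, condition (i).} Let $f$ be $nr$-left symmetric and fix $k_0\in N_w(f)$. Suppose some $k_1\neq k_0$ has $v_{k_1}(f(k_1))>0$. Since $K$ is metric, hence perfectly normal, Lemma~\ref{Urysohn} gives a continuous $\varphi:K\to[0,1]$ with $\varphi(k_1)=0$ and $\varphi(k_0)=1$ (if $k_1\notin N_w(f)$, swap roles so that $\varphi$ vanishes at a point of $N_w(f)$ while $g:=\varphi f$ keeps positive $w$-norm). Set $g=\varphi f$.
\begin{itemize}
\item[(a)] $f\perp_w g$. The convex set of Theorem~\ref{BJ-C(K,X)} consists of the numbers $\varphi(k)|x^*(f(k))|^2\ge 0$ and contains $0$, coming from the norming point where $\varphi=0$.
\item[(b)] $g\not\perp_w f$. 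Take $\lambda=-\varepsilon$. By compactness of $\Pi(K,X)$, the supremum of $|\varphi(k)-\varepsilon|\,|x^*(f(k))|$ is attained. At a maximizer with $\varphi\ge\varepsilon$ the value is strictly below $\varphi|x^*f|\le\|g\|_w$. Otherwise the value is at most $\varepsilon\|f\|_w<\|g\|_w$ for small $\varepsilon$. Hence $\|g-\varepsilon f\|_w<\|g\|_w$.
\end{itemize}
This contradicts left symmetry, so (i) holds, and in particular $N_w(f)=\{k_0\}$.

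\emph{Necessity, condition (ii).} Take $y$ with $f(k_0)\perp_{v_{k_0}} y$; the case $v_{k_0}(y)=0$ is trivial. Set $g=\varphi\,y$ with $\varphi$ continuous and $\varphi(k_0)=1$. As in the sufficiency part, $f\perp_w g$, so $g\perp_w f$. Now suppose $v_{k_0}(y+\lambda f(k_0))<v_{k_0}(y)$. By convexity, $v_{k_0}(y+t\lambda f(k_0))\le v_{k_0}(y)-t\delta$ for $t\in(0,1]$ and some $\delta>0$. Using (i), $\|g+t\lambda f\|_w=\max\big(v_{k_0}(y+t\lambda f(k_0)),S\big)$ with $S=\sup_{k\neq k_0}\varphi(k)v_k(y)$.

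The main obstacle is to choose $\varphi$ so that $S<v_{k_0}(y)$, which yields the contradiction $\|g+t\lambda f\|_w<\|g\|_w$. The function $k\mapsto v_k(y)$ is only upper semicontinuous, because the graph of $k\mapsto J(k)$ is weak$^*$-closed. My first attempt is to take $\varphi(k)=\min\{1,\,c/u(k)\}\cdot\psi(k)$, where $u$ is a continuous majorant of $k\mapsto v_k(y)$ that is close to $v_{k_0}(y)$ at $k_0$, $c$ is slightly less than $v_{k_0}(y)$, and $\psi$ is a Urysohn bump with $\psi^{-1}(1)=\{k_0\}$. If exact equality at $k_0$ cannot be arranged, I would instead run a limiting argument: rescale $y$ to work with $v_{k_0}(y)-\eta$ and let $\eta\to0$, using the strict gap $t\delta$ to absorb the error.
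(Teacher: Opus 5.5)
\textbf{Gap: necessity of (ii).} Your sufficiency argument is correct. It uses the formula $\|g+\lambda f\|_w=\max(v_{k_0}(g(k_0)+\lambda f(k_0)),\sup_{k\neq k_0}v_k(g(k)))$ and so avoids the paper's case split. Your proof that (i) is necessary is also correct. Taking $g=\varphi f$ with $\varphi$ vanishing at a point of $N_w(f)$ is simpler than the paper's $g=\varphi f(k_1)$ with the $r/2$-estimate, because $\overline{x^*(g(k))}x^*(f(k))=\varphi(k)|x^*(f(k))|^2>0$ on every norming pair of $g$. In (b), a maximizer with $x^*(f(k))=0$ simply gives the value $0<\|g\|_w$.

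The gap is in (ii). You correctly reduce it to finding a continuous $\varphi$ with $\varphi(k_0)=1$ and $S=\sup_{k\neq k_0}\varphi(k)v_k(y)<v_{k_0}(y)$, but you never produce one. The remedies you propose use only the upper semicontinuity of $k\mapsto v_k(y)$, and that cannot suffice. Suppose $k_0$ is not isolated and $\limsup_{k\to k_0}v_k(y)=v_{k_0}(y)$; upper semicontinuity allows this, e.g.\ when $k\mapsto v_k(y)$ is continuous. Then every continuous $\varphi$ gives $S\ge\varphi(k_0)v_{k_0}(y)=v_{k_0}(g(k_0))$, whatever majorant $u$ or constant $c$ you choose. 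A limiting argument in $\eta$ does not rescue this either. Adding $t\lambda f$ leaves $v_k(g(k))$ unchanged for $k\neq k_0$, so the gain $t\delta$ at $k_0$ never brings $\|g+t\lambda f\|_w$ below $S$.

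\textbf{Missing idea.} The bad situation is excluded by (i) together with your contrary hypothesis $v_{k_0}(y+\lambda f(k_0))<v_{k_0}(y)$. Suppose $k_n\to k_0$, $k_n\neq k_0$, and $x_n^*\in J(k_n)$ with $\liminf_n|x_n^*(y)|\ge v_{k_0}(y)$.
A weak$^*$ cluster point $x^*$ of $(x_n^*)$ lies in $J(k_0)$, as in the proof of Lemma~\ref{lemma1}, and satisfies $|x^*(y)|\ge v_{k_0}(y)$. Hence $|x^*(y)|=v_{k_0}(y)$.
Moreover $x^*(f(k_0))=0$, because $x_n^*(f(k_n))=0$ by (i) and $(k,x^*)\mapsto x^*(f(k))$ is continuous on $\Pi(K,X)$.
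Then $v_{k_0}(y+\lambda f(k_0))\ge|x^*(y)+\lambda x^*(f(k_0))|=v_{k_0}(y)$, a contradiction.

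Therefore some neighbourhood $U$ of $k_0$ satisfies $\sup_{k\in U\setminus\{k_0\}}v_k(y)<v_{k_0}(y)$. Any Urysohn bump $0\le\varphi\le1$ with $\varphi(k_0)=1$ and $\varphi=0$ off $U$ then gives $S<v_{k_0}(y)$. Consequently $\|g+\lambda f\|_w=\max(v_{k_0}(y+\lambda f(k_0)),S)<v_{k_0}(y)=\|g\|_w$, contradicting $g\perp_w f$.

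The paper's written proof has the same hole. Its first case (a norming pair of $g$ at some $k_1\neq k_0$) ends with $g\perp_w f$, which is no contradiction, and the neighbourhood $U$ is never chosen to rule that case out. The observation above is exactly what closes it there as well.
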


\begin{proof}
\textbf{Necessary part:}	Let us first prove the necessary part of the theorem. 
	
	(i)
	Since $K$ is compact and $f\neq0$, there exist $k_0 \in K$ and
	$x_0^*\in J(k_0)$ such that
	\[
	|x_0^*(f(k_0))|=\|f\|_w.
	\]
	Suppose, to the contrary, that there exist $k_1\neq k_0$ and
	$x_1^*\in J(k_1)$ such that $
	x_1^*(f(k_1))\neq0. $
	Let
	\[
	|x_1^*(f(k_1))|=r>0.
	\]
	In particular, $v_{k_1}(f(k_1))>0$.
	 Consider the open
	set
	\[
	V=\{k\in K:\|f(k)-f(k_1)\|<\frac r2 \}.
	\]
	Then $k_1\in V$. Since $k_0\neq k_1$ and $K$ is Hausdorff, we may choose an
	open set $U$ such that $
	k_1\in U\subset V $ and $
	k_0\notin U.$ Since $K\subset X$, the space $K$ is perfectly normal and 
	by Lemma~\ref{Urysohn}, there exists
	$\varphi:K\to[0,1]$ such that
	\[
	\varphi^{-1}(1)=\{k_1\},
	\qquad
	\varphi^{-1}(0)=K\setminus U.
	\]
	Define $g\in C(K,X)$ by
	\[
	g(k)=\varphi(k)f(k_1),\qquad k\in K.
	\]
	Since $	|x_1^*(g(k_1))|	=	|x_1^*(f(k_1))|	=	r, $
	we have $\|g\|_w\geq r>0$.
	
	\noindent Let $(k,x^*)\in\Pi(K,X)$ be such that $
	|x^*(g(k))|=\|g\|_w. $
	Since $g(k)=0,$ for any $ k \in K\setminus U$, we have $k\in U$. Moreover,
	\[
	\varphi(k)|x^*(f(k_1))|	=	|x^*(g(k))|	=	\|g\|_w.
	\]
	As $0<\varphi(k)\le1$, we have
	\[
	|x^*(f(k_1))|	=	\frac{\|g\|_w}{\varphi(k)}	\geq	\|g\|_w	\geq r.
	\]
	Since $k\in U\subset V$, we also have
	\[
	|x^*(f(k))-x^*(f(k_1))|	\leq	\|f(k)-f(k_1)\|	<	\frac r2.
	\]
	Therefore,
	\[
	\begin{aligned}
		\operatorname{Re} [
		\overline{x^*(g(k))}x^*(f(k))
		]	&\quad = 	\varphi(k)\operatorname{Re} [
		\overline{x^*(f(k_1))}x^*(f(k))]\\
		& \quad  = 	\varphi(k)\operatorname{Re}\bigg[	|x^*(f(k_1))|^2	+	\overline{x^*(f(k_1))}
		\bigg(x^*(f(k))-x^*(f(k_1))\bigg)
		\bigg]\\
		&\quad\geq 	\varphi(k) \bigg[	|x^*(f(k_1))|^2	-	|x^*(f(k_1))|	|x^*(f(k))-x^*(f(k_1)) |\bigg]\\
		&\quad\geq	\varphi(k)|x^*(f(k_1))|	\bigg[	|x^*(f(k_1))|	-	\|f(k)-f(k_1)\|	\bigg]\\
		&\quad>	\varphi(k)|x^*(f(k_1))|	\frac r2\\
		&\quad >0.
	\end{aligned}
	\]
	Thus, $
	\operatorname{Re} [
	\overline{x^*(g(k))}x^*(f(k)) ]>0 ,$
	for every $(k,x^*)\in\Pi(K,X)$ satisfying $
	|x^*(g(k))|=\|g\|_w. $
	Therefore,
	\[
	0\notin
	\operatorname{conv}
	\{
	\overline{x^*(g(k))}x^*(f(k)):
	(k,x^*)\in\Pi(K,X),\
	|x^*(g(k))|=\|g\|_w \}.
	\]
	Following Theorem~\ref{BJ-C(K,X)}, we get $
	g\not\perp_w f. $
	
	\noindent On the other hand $k_0\notin U$ implies $g(k_0)=0$ and $
	|x_0^*(f(k_0))|=\|f\|_w. $
	Hence
	\[
	\overline{x_0^*(f(k_0))}x_0^*(g(k_0))=0.
	\]
	Consequently,
	\[
	0\in	\{	\overline{x^*(f(k))}x^*(g(k)):	(k,x^*)\in\Pi(K,X),\,	|x^*(f(k))|=\|f\|_w\}.
	\]
	It follows from Theorem~\ref{BJ-C(K,X)} that $	f\perp_w g. $
	This contradicts  that $f$ is $nr$-left symmetric. Consequently,
	$f$ satisfies condition (i).\\

(ii) Suppose, on the contrary, that $f(k_0)$ is not left symmetric with
		respect to $v_{k_0}(\cdot)$. Then there exists $z\in S_X$ such that
		\[
		f(k_0)\perp_{v_{k_0}} z
		\qquad\text{but}\qquad
		z\not\perp_{v_{k_0}} f(k_0).
		\]
			Let $U$ be an open neighbourhood of $k_0$. By Lemma \ref{Urysohn},  there
		exists a continuous function $
		\varphi:K \to [0,1] $
		such that
		\[
		\varphi(k_0)=1,
		\qquad
		\varphi(k)=0,
		\quad
		k\in K\setminus U.
		\]
			Define $g\in C(K,X)$ by $
		g(k)=\varphi(k)z,
		\qquad
		k\in K.$
		
	\noindent 	Since $f(k_0)\perp_{v_{k_0}}z$, from  Proposition~\ref{prop:1} we get
		\[
		0\in
		\operatorname{conv}
		\{
		\overline{x^*(f(k_0))}x^*(z):
		x^*\in J(k_0),\,
		|x^*(f(k_0))|=v_{k_0}(f(k_0))
		\}.
		\]
		As $f$ satisfies condition (i), we have $
		\|f\|_w=v_{k_0}(f(k_0)), $
		and $g(k_0)=z$. Hence
		\[
		0\in
		\operatorname{conv}
		\{
	\overline{x^*(f(k_0))}	x^*(g(k_0)):
		x^*\in J(k_0),\,
		|x^*(f(k_0))|=\|f\|_w
		\}.
		\]
		Therefore, by Theorem~\ref{BJ-C(K,X)}, $
		f\perp_w g. $

	\noindent We now show that $g \not\perp_w f.$	We  first assume that there exists $k_1\neq k_0$ and
		$x^*\in J(k_1)$ such that $
		|x^*(g(k_1))|=\|g\|_w. $
		Since $f$ satisfies condition (i), $
		x^*(f(k_1))=0 $
		for every $x^*\in J(k_1)$. Consequently,
		\[
		0\in	\operatorname{conv}	\{\overline{x^*(g(k))}	x^*(f(k)):	(k,x^*)\in\Pi(K,X),\,|x^*(g(k))|=\|g\|_w	\},
		\]
		and therefore from Theorem~\ref{BJ-C(K,X)} $	g\perp_w f .$
		
	\noindent 	It remains to consider the case when every norming pair of $g$ is of
		the form $(k_0,x^*)$. Then $
		\|g\|_w=v_{k_0}(g(k_0))=v_{k_0}(z). $
		Since $
		z\not\perp_{v_{k_0}}f(k_0), $
		Proposition~\ref{prop:1} gives
		\[
		0\notin
		\operatorname{conv}
		\{
	\overline{x^*(z)}	x^*(f(k_0)):
		x^*\in J(k_0),\,
		|x^*(z)|=v_{k_0}(z)
		\}.
		\]
		As $g(k_0)=z$ and $N_w(g)= \{ k_0\},$  applying Theorem~\ref{BJ-C(K,X)}, we get that $
		g\not\perp_w f, $
		contradicting the assumption that $f$ is $nr$-left symmetric. This
		completes the necessary part of the proof.\\

\noindent \textbf{Sufficient part:}
We next prove the sufficient part. Let $
f\perp_w g. $ Since $f$ satisfies condition (i), we get  $
\|f\|_w=v_{k_0}(f(k_0)). $
Applying Theorem~\ref{BJ-C(K,X)} we get
\[
0\in \operatorname{conv} \{
\overline{x^*(f(k_0))} x^*(g(k_0)):x^*\in J(k_0),\, |x^*(f(k_0))|=v_{k_0}(f(k_0))\}.
\]
By Proposition~\ref{prop:1}, $
f(k_0)\perp_{v_{k_0}}g(k_0). $
Since $f(k_0)$ is left symmetric with respect to $v_{k_0}(\cdot)$, we have $
g(k_0)\perp_{v_{k_0}}f(k_0). $

\noindent Suppose  that there exists $k_1\neq k_0$ and
$x^*\in J(k_1)$ such that $
|x^*(g(k_1))|=\|g\|_w. $
Then, by condition (i), $
x^*(f(k_1))=0, $
and therefore  applying  Theorem~\ref{BJ-C(K,X)}, we easily get $
g\perp_w f $.

\noindent Assume now that every norming pair of $g$ is of the form $(k_0, x^*) \in \Pi(K, X)$. Then $
\|g\|_w=v_{k_0}(g(k_0)). $
Since $
g(k_0)\perp_{v_{k_0}}f(k_0), $
Proposition~\ref{prop:1} gives
\[
0\in \operatorname{conv} \{ \overline{x^*(g(k_0))}x^*(f(k_0)): x^*\in J(k_0),\, 
|x^*(g(k_0))|=v_{k_0}(g(k_0))
\}.
\]
Applying Theorem~\ref{BJ-C(K,X)}, we conclude that $ g\perp_w f. $  Hence $f$ is $nr$-left symmetric.
\end{proof}

Note that whenever each $k \in K$ is a vertex of $B_X,$ both the semi-norm $v_k(.)$ and $\|\cdot\|_w$ actually become the norm. We now present the symmetric points characterizations. 

\begin{cor}
	Let $X$ be a Banach space and $K$ be a compact set of vertices of $B_{X}.$   Let $f \in C(K, X).$ Then $f$ is $nr$-left symmetric if and only if  there exists $k_0 \in K$ such that  
	\begin{itemize}
		\item[(i)]   for any $k \neq k_0$, $f(k)=0.$
		\item[(ii)] $f(k_0)$ is left symmetric with respect to $v_{k_0}(\cdot).$
	\end{itemize}
\end{cor}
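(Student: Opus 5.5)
The corollary follows from Theorem~\ref{left} once we show that, when every $k\in K$ is a vertex of $B_X$, condition (i) of Theorem~\ref{left} is equivalent to condition (i) of the corollary. Condition (ii) is literally the same in both statements, so nothing needs to be done there.

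\textbf{From the corollary's (i) to the theorem's (i).} Suppose $f(k)=0$ for all $k\neq k_0$. Then for every $k\neq k_0$ and every $x^*\in J(k)$ we have $x^*(f(k))=x^*(0)=0$. This direction is immediate and does not use the vertex hypothesis.

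\textbf{From the theorem's (i) to the corollary's (i).} Suppose that for some $k\neq k_0$ we have $x^*(f(k))=0$ for every $x^*\in J(k)$. Then $v_k(f(k))=\sup_{x^*\in J(k)}|x^*(f(k))|=0$. Since $k$ is a vertex, $J(k)$ separates the points of $X$, so $v_k(\cdot)$ is a norm on $X$. Hence $v_k(f(k))=0$ forces $f(k)=0$.

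\textbf{Conclusion.} Substituting this equivalence into Theorem~\ref{left} yields the statement. Whenever $f$ is $nr$-left symmetric, the point $k_0$ produced by Theorem~\ref{left} is the same $k_0$ required in the corollary, so no further choice is involved.

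\textbf{Where care is needed.} There is no real obstacle; the only nontrivial input is the definition of a vertex. Note that in the degenerate case $f=0$ the argument still goes through: take any $k_0\in K$, and condition (ii) holds trivially because $0$ is left symmetric.
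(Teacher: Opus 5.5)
Your proposal is correct and follows the same route as the paper, which derives the corollary from Theorem~\ref{left} by noting that at a vertex $k$ the semi-norm $v_k(\cdot)$ is a norm. Hence the condition $x^*(f(k))=0$ for all $x^*\in J(k)$ is equivalent to $f(k)=0$, while condition (ii) is unchanged; your remark on the degenerate case $f=0$ is a harmless extra check.
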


We next characterize the $nr$-right symmetric elements in $C(K, X).$  To this end, we first establish the following lemma.

\begin{lemma}\label{lemma1}
	Let $X$ be a Banach space and let  $K$ a compact subset of $S_X$. Suppose that
	$f\in C(K,X)$ and $r \in \mathbb{R}$. For each $k\in K$, define
	\[
	\mathcal A(k)
	=
	\{x^*\in J(k): |x^*(f(k))|= r\}.
	\]
	Then the set-valued map $	\mathcal A(\cdot) :K\rightarrow B_{X^*} $
	is upper semi-continuous with respect to the weak$^*$ topology at
	every $k \in K$.
\end{lemma}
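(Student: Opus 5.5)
We prove upper semicontinuity through the closed-graph criterion. The values lie in the compact Hausdorff space $(B_{X^*},w^*)$, so it suffices to show that the graph
\[
G=\{(k,x^*)\in K\times B_{X^*}: x^*\in\mathcal A(k)\}
\]
is closed in $K\times (B_{X^*},w^*)$. A set-valued map into a compact Hausdorff space with closed graph is upper semicontinuous. For completeness we will also give the direct argument, which is short.

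First we show that $G$ is closed. Take a net $(k_\gamma,x_\gamma^*)\in G$ with $k_\gamma\to k$ in norm and $x_\gamma^*\to x^*$ weak$^*$. Then $x^*\in B_{X^*}$, since the ball is weak$^*$-closed. Next,
\[
|x_\gamma^*(k_\gamma)-x^*(k)|\le \|k_\gamma-k\|+|x_\gamma^*(k)-x^*(k)|\to0,
\]
so $x^*(k)=1$, and hence $\|x^*\|=1$ and $x^*\in J(k)$. The same estimate applied with $f(k_\gamma)\to f(k)$, which uses the continuity of $f$, gives $x_\gamma^*(f(k_\gamma))\to x^*(f(k))$. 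Therefore $|x^*(f(k))|=r$, and $(k,x^*)\in G$. The key step here is the joint continuity of $(k,x^*)\mapsto x^*(h(k))$ for $h$ norm-continuous on bounded sets. This is the same fact already used in the proof of Theorem \ref{BJ-C(K,X)}.

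Now we derive upper semicontinuity directly. Fix $k_0$ and a weak$^*$-open $W\supset\mathcal A(k_0)$. Suppose that no neighbourhood $U$ of $k_0$ satisfies $\mathcal A(k)\subset W$ for all $k\in U$. Then there is a net $k_\gamma\to k_0$ together with $x_\gamma^*\in\mathcal A(k_\gamma)\setminus W$. Since $K\subset X$ is metrizable, a sequence would suffice here. By weak$^*$-compactness of $B_{X^*}\setminus W$, which is a weak$^*$-closed subset of the ball, we can pass to a subnet converging to some $x^*\notin W$. By closedness of the graph, $x^*\in\mathcal A(k_0)\subset W$, which is a contradiction.

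The only subtle point is the case $\mathcal A(k_0)=\emptyset$, for example when $r>v_{k_0}(f(k_0))$. The argument above covers it: with $W=\emptyset$ it shows that $\mathcal A(k)=\emptyset$ near $k_0$. No real obstacle is expected. The main point of care is that weak$^*$ convergence alone does not give $x_\gamma^*(k_\gamma)\to x^*(k)$; this needs the uniform bound $\|x_\gamma^*\|\le1$ combined with norm convergence of $k_\gamma$.
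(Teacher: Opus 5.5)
Your proof is correct and follows essentially the same route as the paper: a net $k_\gamma\to k_0$ with $x_\gamma^*\in\mathcal A(k_\gamma)\setminus W$, then a weak$^*$-convergent subnet, then a check that the limit lies in $\mathcal A(k_0)\subset W$; you simply package that last check as closedness of the graph. Your estimate $|x_\gamma^*(k_\gamma)-x^*(k)|\le\|k_\gamma-k\|+|x_\gamma^*(k)-x^*(k)|$ is slightly cleaner than the paper's, whose term $|(x^*-x_\gamma^*)(k_\gamma)|$ tacitly needs the same uniform bound $\|x_\gamma^*\|\le 1$ in order to tend to zero.
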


\begin{proof}
	Fix $k_0\in K$. Let $W$ be a weak$^*$-open subset of $B_{X^*}$
	such that $
	\mathcal A(k_0)\subset W. $
	We show that there exists an open neighbourhood $U$ of $k_0$ in $K$
	such that 
	\[
	\mathcal A(k)\subset W
	\qquad\text{for every }k\in U.
	\]
		Suppose on the contrary that no such neighbourhood exists. Then
	there exist a net $\{k_\gamma\} \subset K$ and a net
	$\{x_\gamma^*\} \subset B_{X^*}$ such that $
	k_\gamma\longrightarrow k_0,$
and $	x_\gamma^*\in\mathcal A(k_\gamma)\setminus W, $ for every $\gamma.$
Hence
	\begin{eqnarray}\label{lemma:eqn}
		x_\gamma^*(k_\gamma)=1
	\quad\text{and}\quad
	|	x_\gamma^*(f(k_\gamma))|= r.
	\end{eqnarray}
		Since $B_{X^*}$ is weak$^*$-compact, by passing to a subnet, we
	may assume that $
	x_\gamma^*\overset{w^*}{\longrightarrow}x^* $
	for some $x^*\in B_{X^*}$.
		Since $k_\gamma\to k_0$ and $x_\gamma^*(k_\gamma)=1$, we have
	\[
	\begin{aligned}
		|x^*(k_0)-1|
		&\leq |x^*(k_0-k_\gamma)|
		+|(x^*-x_\gamma^*)(k_\gamma)|  \\
		&\leq \|k_0-k_\gamma\|
		+|(x^*-x_\gamma^*)(k_\gamma)|
		\longrightarrow 0.
	\end{aligned}
	\]
	Thus $
	x^*(k_0)=1, $
	and consequently $x^*\in J(k_0)$.
	Furthermore, since $f(k_\gamma)\to f(k_0)$ and from (\ref{lemma:eqn}) we  get 
	\[
	\begin{aligned}
	\bigg|	|x^*(f(k_0))| - r \bigg|
		&= \bigg| |x^*(f(k_0))| - |x_\gamma^*(f(k_\gamma))\bigg| \\
		&\leq \bigg||x^*(f(k_0))|  - |x^*(f(k_\gamma))| \bigg| + \bigg|  |x^*(f(k_\gamma))| - |x_\gamma^*(f(k_\gamma))|     \bigg|\\
		& \leq 
		|x^*(f(k_0)-f(k_\gamma))| 
		+
		|(x^*-x_\gamma^*)(f(k_\gamma))|
		\longrightarrow0.
	\end{aligned}
	\]
	This implies  $
	|x^*(f(k_0))|= r, $ and therefore
	\[
	x^*\in\mathcal A(k_0)\subset W.
	\]
	Since $W$ is weak$^*$-open and
	$x_\gamma^*\overset{w^*}{\longrightarrow}x^*\in W$, we must have
	$x_\gamma^*\in W$ eventually, which contradicts
	$x_\gamma^*\notin W$ for every $\gamma$.
		Hence there exists an open neighbourhood $U$ of $k_0$ such that
	\[
	\mathcal A(k)\subset W
	\qquad\text{for every }k\in U.
	\]
	Thus $\mathcal A(\cdot)$ is upper semi-continuous at $k_0$.
\end{proof}

We first  provide a necessary condition for $nr$-right symmetric elements in $C(K,X).$

\begin{theorem}\label{necessary}
	Let $X$ be a Banach space and $K$ be a compact subset of $S_X.$  Let $f \in C(K, X).$ Let $f$ is $nr$-right symmetric. Then the following holds: 
	\begin{itemize}
		\item[(i)] for any $k \in K$ there exists $x^* \in J(k)$ such that $|x^*(f(k))| = \|f\|_w.$
		\item[(ii)] for any $k\in K,$ $f(k)$ is right symmetric with respect to $v_{k}(\cdot).$
	\end{itemize}
\end{theorem}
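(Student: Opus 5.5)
The plan is to argue both parts by contraposition. Each time I will build a $g\in C(K,X)$ with $g\perp_w f$ but $f\not\perp_w g$, and test both relations with Theorem~\ref{BJ-C(K,X)}. The basic device is the analogue, for $\|\cdot\|_w$, of the homogeneity fact recorded after Proposition~\ref{prop:1}: for any $h\in C(K,X)$ there is $\alpha\in\mathbb K$ with $\alpha f+h\perp_w f$. This holds because $\|\cdot\|_w$ is a continuous semi-norm, so the argument of \cite[Th.~2.1.13]{MPS} applies verbatim. Thus I will always take $g=\alpha f+h$, where $h$ is a bump supported in a neighbourhood $U$ of a chosen point. The work is then to force $f\not\perp_w g$.

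\textbf{(i).} Suppose some $k_1\in K$ has $v_{k_1}(f(k_1))<\|f\|_w$. By a compactness argument in the spirit of Lemma~\ref{lemma1}, there is an open $U\ni k_1$ and $\delta>0$ with $|x^*(f(k))|\le\|f\|_w-\delta$ for all $k\in\overline U$ and $x^*\in J(k)$. Hence every norming pair of $f$ lies outside $\overline U$. Take $\varphi$ from Lemma~\ref{Urysohn} with $\varphi^{-1}(1)=\{k_1\}$ and $\varphi^{-1}(0)=K\setminus U$. Put $h=t\varphi\, z$ for a suitable $z\in X$ and $t>0$, and let $g=\alpha f+h\perp_w f$. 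On the norming pairs $(k,x^*)$ of $f$ we have $g(k)=\alpha f(k)$, so
\[
\overline{x^*(f(k))}\,x^*(g(k))=\alpha\|f\|_w^2 .
\]
Therefore $0\notin\operatorname{conv}$ of these values exactly when $\alpha\neq0$, and then $f\not\perp_w g$, which contradicts $nr$-right symmetry.

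The main obstacle is ensuring $\alpha\ne0$, i.e.\ that $h\not\perp_w f$. For this I would choose $z$ and $t$ large enough that the norming pairs of $h$ concentrate at $k_1$. Then $h\not\perp_w f$ reduces, via Theorem~\ref{BJ-C(K,X)} and Proposition~\ref{prop:1}, to $z\not\perp_{v_{k_1}} f(k_1)$. I would first try $z=f(k_1)$ when $v_{k_1}(f(k_1))>0$, since $f(k_1)\not\perp_{v_{k_1}}f(k_1)$. Otherwise I would try $z=k_1+s f(k_1)$ with $s$ chosen to avoid the finitely constrained bad set. To handle norming pairs near, but not at, $k_1$, I would use the upper semicontinuity of Lemma~\ref{lemma1} applied to $h$. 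This gives: if $0\notin\operatorname{conv}$ at $k_1$, then by weak$^*$ compactness and continuity of $(k,x^*)\mapsto\big(x^*(h(k)),x^*(f(k))\big)$ the same holds for all norming pairs of $h$. This requires shrinking $U$, which is harmless.

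\textbf{(ii).} Now (i) is available, so every $k_0\in K$ carries a norming pair of $f$ and $v_{k_0}(f(k_0))=\|f\|_w$. Suppose $z\perp_{v_{k_0}}f(k_0)$ but $f(k_0)\not\perp_{v_{k_0}} z$. I would take $g$ to be a perturbation of $f$ near $k_0$, e.g.
\[
g=f+\varphi\,(\beta z-f(k_0))\quad\text{or}\quad g=\beta\varphi z+\alpha f,
\]
with $\varphi$ peaking only at $k_0$ and $\beta$ large. The aim is to get $N_w(g)=\{k_0\}$ and $g(k_0)$ a multiple of $z$, so that $g\perp_w f$ follows from $z\perp_{v_{k_0}}f(k_0)$. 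Meanwhile, on the norming pairs of $f$ away from $U$, $g$ is a nonzero multiple of $f$. The delicate part, as in (i), is the norming pairs of $f$ at $k_0$ itself, where the values reduce to $f(k_0)\not\perp_{v_{k_0}} z$. These must combine with the away-from-$U$ values without producing $0$ in the convex hull. I expect to handle this by a real-scalar (sign) adjustment of $\alpha$ and $\beta$, consistent with the later theorem being stated for real spaces.
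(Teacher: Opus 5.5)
\textbf{Part (i).} Your argument works when $r:=v_{k_1}(f(k_1))>0$, and it is a genuinely different route from the paper's Case~2.
\begin{itemize}
\item With $z=f(k_1)$, every norming pair of $h=\varphi z$ lies in $\{(k,x^*)\in\Pi(K,X): k\in\overline U,\ |x^*(z)|\ge r\}$.
\item As $U$ shrinks, these compact sets decrease to $\{k_1\}\times\{x^*\in J(k_1):|x^*(z)|=r\}$, where $\overline{x^*(z)}\,x^*(f(k_1))=r^2>0$.
\item So for small $U$ every value at a norming pair of $h$ has positive real part. Hence $h\not\perp_w f$, and every admissible $\alpha$ is nonzero.
\end{itemize}
This is a direct compactness argument, not Lemma~\ref{lemma1} applied to a fixed $h$, since $h$ changes with $U$. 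The factor $t$ plays no role, because $N_w(th)=N_w(h)$. The paper instead writes down $g$ with $g(k_1)=-\frac{\|f\|_w}{r}f(k_1)$, and to do so it invokes continuity of $k\mapsto v_k(f(k_1))$. That map is in general only upper semicontinuous; it can drop at vertices of $B_X$. Your route does not need it.

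The gap is the case $r=0$. Then every functional in $J(k_1)$ annihilates $f(k_1)$, so $z\perp_{v_{k_1}}f(k_1)$ for \emph{every} $z$. In particular, for $z=k_1+sf(k_1)$ all products $\overline{x^*(z)}\,x^*(f(k_1))$ vanish. Worse, any $h$ with a norming pair at $k_1$ satisfies $h\perp_w f$. If $f$ is numerically null on a neighbourhood of $k_1$, every $h$ supported there satisfies $h\perp_w f$. So a nonzero $\alpha$ cannot be obtained from $h\not\perp_w f$. The admissible $\alpha$ need not be unique, so a nonzero one may exist, but you give no way to secure it.

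The paper handles this case with a different construction. It takes $g=\|f\|_w\,\psi_1(\cdot)\,k_1+\psi_2(\cdot)\,f$, where $\psi_1^{-1}(1)=\{k_1\}$, $\psi_2^{-1}(1)=N_w(f)$, and the supports are disjoint. Then $\|g\|_w=\|f\|_w$ and $N_w(g)=N_w(f)\cup\{k_1\}$. The pairs at $k_1$ contribute the value $0$, so $g\perp_w f$. Since $g=f$ on $N_w(f)$, also $f\not\perp_w g$.

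\textbf{Part (ii).} This is a plan rather than a proof, and the devices you name do not close it.
\begin{itemize}
\item Here you need the \emph{closed} condition $g\perp_w f$. In (i), by contrast, $h\not\perp_w f$ was an open condition and survived perturbation. So in your scheme $k_0$ must be an exact norming point of $g$, with $g(k_0)$ a multiple of $z$.
\item A large $\beta$ does not achieve this. Near $k_0$ you compare $\beta\varphi(k)v_k(z)$ with $\beta v_{k_0}(z)$, and this comparison does not depend on $\beta$. Moreover, $k_0$ need not be a local maximum of $k\mapsto v_k(z)$: in the Euclidean plane with $k_0=e_1$ and $z=e_1+e_2$, one has $v_{(\cos t,\sin t)}(z)=\cos t+\sin t>1$ for small $t>0$.
\item In your second candidate, $g(k_0)=\beta z+\alpha f(k_0)$ is not even a multiple of $z$.
\item The convex-hull step is left to a sign adjustment. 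That covers at most $\mathbb K=\mathbb R$, whereas the statement is for arbitrary Banach spaces.
\item You also overlook the norming pairs of $f$ at $k\in U\setminus\{k_0\}$, which exist by (i).
\end{itemize}

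The missing idea is the one the paper uses.
\begin{itemize}
\item The set $\operatorname{conv}\{\overline{x^*(f(k_0))}\,x^*(z):x^*\in\mathcal A(k_0)\}$ is compact and does not contain $0$.
\item Replace $z$ by $e^{i\theta}z$; this does not affect $z\perp_{v_{k_0}}f(k_0)$. Then $\operatorname{Re}\,\overline{x^*(f(k_0))}\,x^*(z)\ge c>0$ on $\mathcal A(k_0)$.
\item Lemma~\ref{lemma1} and the continuity of $f$ carry this positivity over to $\mathcal A(k)$ for all $k$ in a neighbourhood $U$ of $k_0$.
\item Take $g=(1-\varphi)f+\varphi z$, with no free scalar. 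Every value at a norming pair of $f$ then has positive real part, so $f\not\perp_w g$.
\end{itemize}
It remains to guarantee $v_{k_0}(z)=\|g\|_w$, and this must be arranged explicitly. Normalize $v_{k_0}(z)=\|f\|_w$. Then replace $\varphi z$ by $\varphi\rho z$, where $\rho:K\to(0,1]$ is continuous with $\rho(k_0)=1$ and $\rho(k)v_k(z)\le v_{k_0}(z)$. Such a $\rho$ exists because $k\mapsto v_k(z)$ is upper semicontinuous. Finally, $\|g+\lambda f\|_w\ge v_{k_0}(z+\lambda f(k_0))\ge v_{k_0}(z)=\|g\|_w$, which gives $g\perp_w f$.
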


\begin{proof}
	(i)
	Suppose on the contrary we assume that there exists $k_1 \in K$ such that $|x^*(f(k_1))|<\|f\|_w,$ for any $x^* \in J(k_1).$ Let $v_{k_1}(f(k_1)) = r < \|f\|_w.$
	
\textbf{Case~1:}	First we consider the case where $r=0.$  	Since $N_w(f)$ is nonempty and compact and $k_1\notin N_w(f)$,
	choose disjoint open sets $U,V\subset K$ such that $
	N_w(f)\subset U $ and 
	$k_1\in V.$
Following Lemma \ref{Urysohn}, there exist continuous functions $\alpha,\beta:K\to[0,1]$ such that
	\begin{eqnarray*}
			\alpha^{-1}(1)=\{k_1\},
		\qquad
		\alpha^{-1}(0)= K\setminus V,\\
			\beta^{-1}(1)= N_w(f),
		\qquad
		\beta^{-1}(0)= K\setminus U.
	\end{eqnarray*}
	We now define $g \in C(K, X)$ such that 
	\[
	g(k) =  \alpha (k) \|f\|_w k_1 + \beta(k) f(k), \quad \quad \text{for any}\, \, k \in K.
	\]
	Clearly, for any $k \notin U \cup V,$ $g(k) =0.$  Let $k \in U.$ Whenever, $k \in N_w(f), g(k)= f(k),$ therefore, $v_k(g(k))= v_k(f(k)) =\|f\|_w.$ Whenever $k \in U \setminus N_w(f), $ $$v_k(g(k)) = \beta(k) v_k(f(k))	< v_k(f(k))< \|f\|_w.$$ Suppose that $k \in V.$ At $k = k_1, $ $v_{k_1}(g(k_1))= \|f\|_w,$ and whenever $k \in V \setminus \{k_1\},$ we have $v_k(g(k)= \alpha(k)\|f\|_w v_k(k_1) < \|f\|_w.$ Therefore, 
	\[
	\|g\|_w= \|f\|_w  \quad \text{and} \quad  N_w(g)= N_w(f) \cup \{ k_1\}.
	\]
	
	\noindent Observe that at $k \in N_w(f),$ $g(k)= f(k).$ Therefore
	\[
	\big\{ \overline{x^*(f(k))} x^*(g(k)): (k, x^*) \in \Pi(K, X), |x^*(f(k))| = \|f\|_w\big\}= \{ |x^*(f(k))|^2\}= \{ \|f\|_w^2\}.
	\]
	Applying Theorem \ref{BJ-C(K,X)}, we conclude $ f \not\perp_w g.$ 
	On the other hand $ k_1 \in N_w(g),$ and since $v_{k_1}(f(k_1))=0,$ we have 
	\[
	 \{ \overline{x^*(g(k_1))} x^*(f(k_1)) : x^* \in J(k_1),  |x^*(g(k_1))|= \|g\|_w\}=\{0\}.
	\]Therefore,
	\[
	0 \in \operatorname{conv} \{ \overline{x^*(g(k))} x^*(f(k)): (k, x^*) \in \Pi(K, X), |x^*(g(k))|= \|g\|_w \}.
	\]
	Following Theorem \ref{BJ-C(K,X)}, we get that $g \perp_w f.$ This contradicts that $f$ ia not $nr$-right symmetric. 
	
	\textbf{Case~2:} 
	Now suppose that $r>0$. Since the mapping $ k \to  v_k(f(k_1)) $ is continuous and $v_{k_1}(f(k_1))=r>0, $ there exists $\delta>0$ such that $ V:=B_X(k_1,\delta)\cap K, $ satisfying 
\[
v_k(f(k_1))\geq \frac{r}{2} \qquad\text{for every } k\in V.
\]
Moreover, since $k_1\notin N_w(f)$ and $N_w(f)$ is compact, we can take an open set $U\subset K$ containing $N_w(f)$ and satisfying $ U\cap V=\emptyset.$
	Define $\alpha:K\to[0,\infty)$ by 	\[
	\alpha(k)=	\begin{cases}
	\displaystyle	\frac{r}{v_k(f(k_1))}	\bigg(1-\frac{\|k-k_1\|}{\delta} \bigg),
		& k\in V,\\[2ex]	0,	& k\notin V.	\end{cases}
	\]
	Since the mapping $k \to  v_k(f(k_1))$ is continuous and	$v_k(f(k_1))\geq r/2$ on $V$, the function $\alpha$ is continuous on $V$. 	Furthermore, $\alpha(k)\to0$ as $k\to\partial V$. Hence,	$\alpha\colon K\to[0,\infty)$ is continuous. 	Clearly, $	\alpha(k_1)=1, $	and for every $k\in V$,
	\begin{eqnarray}\label{eqn1}
			\alpha(k)v_k(f(k_1))	=	r (1-\frac{\|k-k_1\|}{\delta} ) 	\leq r,
	\end{eqnarray}
	with equality if and only if $k=k_1$.
	
\noindent	By Lemma \ref{Urysohn},  there exists a continuous function 	$\beta:K\to[0,1]$ such that	\[
	\beta^{-1}(1)= N_w(f), 	\qquad 	\beta^{-1}(0)=  K\setminus U.
	\]
	Define $g\in C(K,X)$ by
	\[
	g(k)	=	-\frac{\|f\|_w}{r}\alpha(k)f(k_1)	+\beta(k)f(k),	\qquad k\in K.
	\]
		We first show that $\|g\|_w=\|f\|_w$. If $k\in V$, then	$\beta(k)=0$, and hence $$	g(k)=-\frac{\|f\|_w}{r}\alpha(k)f(k_1). $$	Therefore following (\ref{eqn1})
	\[
	v_k(g(k))	=	\frac{\|f\|_w}{r}\alpha(k)v_k(f(k_1))	=	\|f\|_w (1-\frac{\|k-k_1\|}{\delta} )	\leq \|f\|_w.
	\]
	In particular, $ 	g(k_1)=-\frac{\|f\|_w}{r}f(k_1) $	and $$
	v_{k_1}(g(k_1))	=	\frac{\|f\|_w}{r}v_{k_1}(f(k_1))	= \|f\|_w.$$
	Moreover, for any $k \in V \setminus \{k_1\},$ $\alpha(k) v_k(f(k_1))< r,$ we have  $	v_k(g(k))< \|f\|_w.$
	
	\noindent 	If $k\in U$, then $\alpha(k)=0$, and therefore $	g(k)=\beta(k)f(k). $ Consequently, $$
	v_k(g(k))	=	\beta(k)v_k(f(k)) \leq  \|f\|_w.$$
	For $k\in N_w(f)$, we have $\beta(k)=1$, and hence $	v_k(g(k))= \|f\|_w.$
	Finally, if $k\notin U\cup V$, then $g(k)=0$.	Thus 
	\[
		\|g\|_w= \|f\|_w \quad \text{and} \quad 	N_w(g)=N_w(f)\cup\{k_1\}. 
	\]
	Observe that at $k \in N_w(f),$ $g(k)= f(k).$ Therefore
\[
\big\{ \overline{x^*(f(k))} x^*(g(k)): (k, x^*) \in \Pi(K, X), |x^*(f(k))| = \|f\|_w\big\}= \{ |x^*(f(k))|^2\}= \{ \|f\|_w^2\}.
\]
Applying Theorem \ref{BJ-C(K,X)}, we conclude $ f \not\perp_w g.$

\noindent On the other hand take $k' \in N_w(f) \subset N_w(g),$ we have $g(k')= f(k'). $ Then for any $x^* \in J(k')$, 
\[
\{ \overline{x^*(g(k'))} x^*(f(k')): x^* \in J(k'), |x^*(g(k'))|= \|g\|_w\} = \{ |x^*(f(k'))|^2\}= \{ \|f\|^2_w\}.
\]
As $k_1 \in N_w(f),$ let $x^*\in J(k_1)$ satisfy $
	|x^*(g(k_1))|=\|g\|_w=\|f\|_w.$
	Since
	\[
	g(k_1)=-\frac{\|f\|_w}{r}f(k_1),
	\]
	we obtain
	\begin{eqnarray*}
			\overline{x^*(g(k_1))}x^*(f(k_1)) = - \frac{\|f\|_w}{r} |x^*(f(k_1)) | < 0
				\end{eqnarray*}
	Combining these two observation, 	we get that 
	\[
	0\in\operatorname{conv}\{ \overline{x^*(g(k))} x^*(f(k)): (k, x^*) \in \Pi(K, X), |x^*(g(k))|= \|g\|_w\}.
	\]
	Therefore, by Theorem~\ref{BJ-C(K,X)}, $
	g\perp_w f. $
		This contradicts the right symmetry of $f$.\\

	(ii) Suppose on the contrary we assume that there exists $k_0 \in K$ such that $f(k_0)$ is not right symmetric with respect to $v_{k_0}(\cdot).$ Then there exists $w \in X$  and 
	\[
	w \perp_{v_{k_0}} f(k_0)	\qquad\text{but}\qquad	f(k_0)\not\perp_{v_{k_0}} w.
	\]
	Since the semi-norm orthogonality is homogenous, we can choose $w$ such that $v_k(w) \leq \|f\|_w.$    Consider the set 
	\[
	\mathcal{A}(k_0)=\{ x^* \in J(k_0): |x^*(f(k_0))|= \|f\|_w\}. 
	\]
	As $	f(k_0)\not\perp_{v_{k_0}} w$, from Proposition \ref{prop:1} we get 
	\[
	0 \notin \operatorname{conv} \{  \overline{x^*(f(k_0))}x^*(w): x^* \in J(k_0), |x^*(f(k_0))|= \|f\|_w\}.
	\]
	This implies that there exists $ \theta \in \mathbb{R} $ such that $
	\operatorname{Re} \bigg(e^{i \theta}\overline{x^*(f(k_0))}x^*(w) \bigg)> 0, \forall x^* \in  \mathcal{A}(k_0).$ Take $z =e^{i\theta} w,$ clearly $v_k(z)= v_k(w) \leq \|f\|_w.$ Then
	\[
	\operatorname{Re}	(	\overline{x^*(f(k_0))}x^*(z) )>0	\qquad	\text{for every }x^*\in\mathcal A(k_0).
	\]
	Define $
	\Phi:B_{X^*}\longrightarrow\mathbb R $
	by
	\[
	\Phi(x^*)	=	\operatorname{Re}( 	\overline{x^*(f(k_0))}x^*(z)).
	\]
	Since $\Phi$ is weak$^*$-continuous and $\mathcal A(k_0)$ is weak$^*$-compact,
	we have
	\[
	c=\min_{x^*\in\mathcal A(k_0)}\Phi(x^*)>0.
	\]
	Hence the set
	\[
	W=\{ 	x^*\in B_{X^*}:\Phi(x^*)>\frac{c}{2} \}
	\]
	is a weak$^*$-open neighborhood of $\mathcal A(k_0)$. By Lemma~\ref{lemma1},
	there exists an open neighborhood $U$ of $k_0$ such that $
	\mathcal A(k)\subset W$, for any $k\in U.$ 
	Following Lemma \ref{Urysohn}, there exists $\alpha: K \to [0,1]$ such that $\alpha(k_0)=1$ and $\alpha(K\setminus U)=0.$ Define $g: K \to X$ such that 
	$$g(k)= (1- \alpha(k)) f(k)+ \alpha(k) z, \quad \forall k \in K.$$
	
\noindent	As $f$ satisfies the condition (i), we have $v_k(f(k))=\|f\|_w, $ for any $k \in K.$
Observe that whenever  $k \in K \setminus U, $ $v_{k}(g(k))= v_{k}(f(k))= \|f\|_w.$ Moreover, for any $k \in U,$ $$v_{k}(g(k))= v_{k}[(1- \alpha(k) f(k) +  \alpha(k) z)] \leq (1-\alpha(k)) v_{k}(f(k))+\alpha(k) v_k(z) \leq \|f\|_w. $$
	So clearly, $\|g\|_w =\sup_{k\in K}	 v_{k} (g(k)) = \|f\|_w. $ Also, $v_{k_0}(g(k_0)) =\|g\|_w.$ 
	\begin{eqnarray*}
		\|g + \lambda f \|_w= \sup_{k \in K} v_k(g(k)+ \lambda f(k)) &\geq& v_{k_0}( g(k_0)+ \lambda f(k_0))\\ &=& v_{k_0}(z+ \lambda f(k_0)) \\ &\geq&  v_{k_0} (z)= v_{k_0}(g(k_0))= \|g\|_w.
	\end{eqnarray*}
	This implies $g \perp_w f.$

\noindent On the other hand since	for any $k \in K \setminus U, $ $ g(k)= f(k),$ we get
	\[
	\{ x^*(g(k)): k \in K \setminus U, x^* \in J(k), x^*(f(k))= \|f\|_w\}= \{  \|f\|_w\}.
	\] 
	For any $k \in U, $ $g(k)= (1 -\alpha(k)) f(k) + \alpha(k) z.$ Observe that for any $x^* \in \mathcal{A}(k_0)$ we have  $x^* \in W,$ so, $ \operatorname{Re}\bigg(
	\overline{x^*(f(k))}x^*(z) \bigg)> 0.$
	Consequently,
	\begin{align*}
		\operatorname{Re}	(	\overline{x^*(f(k))}x^*(g(k)))	&=	(1-\alpha(k))|x^*(f(k))|^2+	\alpha(k)	\operatorname{Re}	(	\overline{x^*(f(k))}x^*(z))\\
			&>0.
	\end{align*}
	This implies 
	\[
	0 \notin \operatorname{conv} \{ \overline{x^*(f(k))}x^*(g(k)):(k, x^*) \in \Pi(K, X), |x^*(f(k))|= \|f\|_w\}.
	\]
	Following Theorem \ref{BJ-C(K,X)}, we get $f \not\perp_w g. $
This contradicts that $f$ is $nr$-right symmetric. 
\end{proof}

The converse implication holds when $X$ is a real Banach space, and hence we obtain the following characterization.

\begin{theorem}\label{right}
	Let $X$ be a real Banach space and $K$ be a compact subset of $S_X.$  Let $f \in C(K, X).$  Then $f$ is nr-right symmetric if and only if  the following holds: 
	\begin{itemize}
		\item[(i)] for any $k \in K$ there exists $x^* \in J(k)$ such that $|x^*(f(k))| = \|f\|_w.$
		\item[(ii)] for any $k\in K,$ $f(k)$ is right symmetric with respect to $v_{k}(\cdot).$
	\end{itemize}
\end{theorem}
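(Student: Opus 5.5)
The necessity is exactly Theorem~\ref{necessary}, which was proved for arbitrary Banach spaces, so only sufficiency needs an argument. We assume (i) and (ii) and take $g\in C(K,X)$ with $g\perp_w f$; the goal is $f\perp_w g$. By (i), $v_k(f(k))=\|f\|_w$ for every $k\in K$. Hence $N_w(f)=K$, and for each $k$ the set $\mathcal{A}(k)=\{x^*\in J(k):|x^*(f(k))|=\|f\|_w\}$ is nonempty. In the real case Theorem~\ref{BJ-C(K,X)} reduces $f\perp_w g$ to a sign condition: we need some pair $(k,x^*)$ with $x^*\in\mathcal A(k)$ and $x^*(f(k))x^*(g(k))\le 0$, and some (possibly different) pair with $x^*(f(k))x^*(g(k))\ge 0$. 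The whole proof therefore consists of producing these two pairs.

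The natural first step is to pick $k_1\in N_w(g)$, so that $v_{k_1}(g(k_1))=\|g\|_w$. I would then show $g(k_1)\perp_{v_{k_1}} f(k_1)$. By Theorem~\ref{BJ-C(K,X)}, the hypothesis $g\perp_w f$ gives $0$ in the convex hull of $\{\overline{x^*(g(k))}x^*(f(k))\}$ taken over all norming pairs of $g$. In the real case this means there are norming pairs $(k_1,x_1^*)$ and $(k_2,x_2^*)$ of $g$, possibly at different points, giving values $\ge0$ and $\le 0$ respectively. This is the main obstacle: the two signs may come from different points of $K$, whereas Proposition~\ref{prop:1} at a single point needs both signs at that point.

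To get around this I would use the extra slack coming from (i): $f$ is norming everywhere, so $\|g+\lambda f\|_w\ge v_k(g(k)+\lambda f(k))$ for every $k$. The plan is to argue directly on the norm inequality, $\|g+\lambda f\|_w\ge\|g\|_w$ for all real $\lambda$. Suppose, for contradiction, that $f\not\perp_w g$. Then, without loss of generality, $x^*(f(k))x^*(g(k))>0$ for every $k$ and every $x^*\in\mathcal A(k)$. By the compactness and upper semicontinuity supplied by Lemma~\ref{lemma1}, this strict positivity should be uniform. Next I would use $f$ to build a real-case version of the argument in Theorem~\ref{right}, pairing it with (ii). Pick $k_1$ with the $\ge0$ sign and $k_2$ with the $\le0$ sign. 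At each point $k$, (ii) says that $y\perp_{v_k}f(k)$ implies $f(k)\perp_{v_k}y$. I would apply this with $y=g(k)+\mu_k f(k)$, where the real $\mu_k$ is chosen (by the existence statement after Proposition~\ref{prop:1}) so that $y\perp_{v_k} f(k)$. Right symmetry then gives $f(k)\perp_{v_k} g(k)+\mu_k f(k)$, which yields some $x^*\in\mathcal A(k)$ with $x^*(f(k))x^*(g(k))\le -\mu_k\|f\|_w^2$.

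So the positivity assumption forces $\mu_k<0$ at every $k$. Then the comparison $v_k(g(k)+\mu f(k))\ge v_k(g(k))$ at the point $k_2$ (respectively $k_1$), for $\lambda$ of appropriate sign and small size, should contradict $g\perp_w f$: the uniform positivity gives, for all small $\lambda<0$, the bound $\sup_k v_k(g(k)+\lambda f(k))<\|g\|_w$. I expect this last step, namely turning pointwise information into strict decrease of the supremum, to be the delicate part. I would handle it by a first-order expansion of $v_k$ along $f(k)$, using the one-sided derivative formula $\lim_{t\to0^+}\frac{v_k(y+tf(k))-v_k(y)}{t}=\max\{\operatorname{sgn}(x^*(y))x^*(f(k))\}$ over norming $x^*$, together with compactness of $K$ to make the estimate uniform.
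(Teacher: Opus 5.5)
Your steps up to ``$\mu_k<0$'' are correct, and they already contain the real mechanism. Shifting $g(k)$ by a minimizer $\mu_k$ of $\phi_k(\lambda)=v_k(g(k)+\lambda f(k))$ and then invoking (ii) is exactly how one proves the one-sided form of right symmetry (Theorem~\ref{v-right}), which the paper's proof relies on.

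The gap is in how you close. You attribute the bound $\sup_k v_k(g(k)+\lambda f(k))<\|g\|_w$ for small $\lambda<0$ to the uniform positivity of $x^*(f(k))x^*(g(k))$ on $\mathcal A(k)$. But $\mathcal A(k)$ consists of functionals norming $f(k)$. The one-sided derivative formula you quote, and hence the behaviour of $\|g+\lambda f\|_w$ near $\lambda=0$, is governed by the pairs $(k,x^*)$ norming $g$. Positivity on $\mathcal A(k)$ alone cannot yield the bound. Take real $\ell_\infty^2$ with $K=\{(1,1)\}$: then $J((1,1))=\{(t,1-t):t\in[0,1]\}$ and $v_{(1,1)}=\|\cdot\|_\infty$. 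For $f\equiv(1,0)$ and $g\equiv(1/2,1)$ we get $\mathcal A=\{e_1^*\}$ with $e_1^*(f)e_1^*(g)=1/2>0$, while $\|g+\lambda f\|_w=\max(|1/2+\lambda|,1)\ge\|g\|_w$ for all $\lambda$.

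So the uniform first-order expansion over $K$ is aimed at the wrong functionals. To run it, you would first have to turn ``$\mu_k<0$ for every minimizer'' into positivity of $x^*(g(k))x^*(f(k))$ over the $g$-norming functionals. Note that minimizers need not be unique, so a single chosen $\mu_k$ is not enough. Once that conversion is done at one point, the proof is already over.

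The repair is local and needs neither Lemma~\ref{lemma1} nor compactness; the case $\|f\|_w=0$ is trivial, so assume $\|f\|_w>0$.

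\textbf{At $k_2$.} There is $x_2^*\in J(k_2)$ with $|x_2^*(g(k_2))|=\|g\|_w$ and $x_2^*(g(k_2))x_2^*(f(k_2))\le0$. Hence for every $\lambda\le0$ we have $\phi_{k_2}(\lambda)\ge|x_2^*(g(k_2))+\lambda x_2^*(f(k_2))|\ge\|g\|_w=\phi_{k_2}(0)$. Since $\phi_{k_2}$ is convex and coercive ($v_{k_2}(f(k_2))=\|f\|_w>0$), it therefore has a minimizer $\mu\ge0$. Your shift-and-apply-(ii) step with this $\mu$ produces $x^*\in\mathcal A(k_2)$ with $x^*(f(k_2))x^*(g(k_2))\le-\mu\|f\|_w^2\le0$, contradicting the positivity assumption.

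\textbf{At $k_1$.} Symmetrically, there is a minimizer $\mu'\le0$, giving an element of $\mathcal A(k_1)$ with product $\ge-\mu'\|f\|_w^2\ge0$.

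Together these two values put $0$ in the convex hull of Theorem~\ref{BJ-C(K,X)}, so $f\perp_w g$ follows directly, with no contradiction argument at all. This is precisely the paper's proof: it applies the one-sided characterization of right symmetry (Theorem~\ref{v-right}) separately at $k_1$ and at $k_2$. That is how the obstacle you correctly identified, namely that the two signs may come from different points of $K$, is overcome.
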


\begin{proof}
Following Theorem~\ref{necessary}, it suffices to prove the sufficient part.
Let $g\perp_w f$. By Theorem~\ref{BJ-C(K,X)}, we have
\[
0\in
\operatorname{conv}
\{
sgn(x^*(g(k))) x^*(f(k)):
k\in K,\,x^*\in J(k),\
|x^*(g(k))|=\|g\|_w \}.
\]
Hence, there exist $k_1,k_2\in K$ and
$x_1^*\in J(k_1)$, $x_2^*\in J(k_2)$ such that
\[
|x_1^*(g(k_1))|=|x_2^*(g(k_2))|=\|g\|_w
\]
and
\[
sgn(x_1^*(g(k_1)))x_1^*(f(k_1))\geq0,
\qquad
sgn(x_2^*(g(k_2)))x_2^*(f(k_2))\leq0.
\]
By condition {\rm (i)}, we have $
v_k(f(k))=\|f\|_w,$ for any $k\in K. $
Since $f(k_1)$ is right symmetric with respect to
$v_{k_1}(\cdot)$,   following Theorem \ref{v-right}, we get that there exists
$y_1^*\in J(k_1)$ such that
\[
|y_1^*(f(k_1))|
=v_{k_1}(f(k_1))
=\|f\|_w,
\qquad
sgn(y_1^*(f(k_1)))y_1^*(g(k_1))\geq0.
\]
Similarly, since $f(k_2)$ is right symmetric with respect to
$v_{k_2}(\cdot)$, again from Theorem \ref{v-right} we get that there exists  $y_2^*\in J(k_2)$ such that
\[
|y_2^*(f(k_2))|
=v_{k_2}(f(k_2))
=\|f\|_w,
\qquad
sgn(y_2^*(f(k_2)))y_2^*(g(k_2))\leq0.
\]
Consequently,
\[
0\in
\operatorname{conv}
\{
sgn(x^*(f(k)))x^*(g(k)):
k\in K,\,x^*\in J(k),\
|x^*(f(k))|=\|f\|_w \}.
\]
Applying  Theorem~\ref{BJ-C(K,X)} we get  $
f\perp_w g.$
Therefore, $f$ is $nr$-right symmetric..
\end{proof}

We finish this section by recording some straightforward consequences of
the above characterization, which also serve as illustrative examples.

\begin{cor}\label{l_1}
	Let $X = \ell_1^n$ and let $K = \{ e_i: 1 \leq i \leq n\}.$
	Suppose that $f \in C(K, X).$ Then the following holds
	\begin{itemize}
		\item[(i)] $f$ is $nr$-left symmetric if and only if $f=0.$
		\item[(ii)] $f$ is $nr$-right symmetric if and only if
		$f(e_i) = \pm e_j$ for any $1 \leq i,j \leq n.$
	\end{itemize}
\end{cor}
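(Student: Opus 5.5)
The plan is to reduce everything to Theorems~\ref{left} and~\ref{right} by computing the pointwise semi-norms $v_{e_i}(\cdot)$ explicitly. Since $K=\{e_1,\dots,e_n\}$ is finite and discrete, every map $K\to X$ is continuous, so $C(K,X)$ is just $X^n$. No Urysohn-type construction is needed, and the only work is to identify $J(e_i)$ and $v_{e_i}$.

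\textbf{Step 1: $v_{e_i}=\|\cdot\|_1$.} We have
\[
J(e_i)=\{x^*\in\ell_\infty^n:\ x^*_i=1,\ |x^*_j|\le 1\ (j\neq i)\}.
\]
Given $x\in\ell_1^n$, first assume $x_i\neq 0$. Choose $x^*_j=\operatorname{sgn}(x_j)\,\overline{\operatorname{sgn}(x_i)}$ for $j\neq i$. Then
\[
x^*(x)=\operatorname{sgn}(x_i)\,\|x\|_1 .
\]
If $x_i=0$, take $x^*_j=\operatorname{sgn}(x_j)$ for $j\ne i$. In both cases $v_{e_i}(x)=\|x\|_1$, since the reverse inequality is trivial. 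This is the remark on $\ell_1^n$ made earlier in the paper, and in particular each $e_i$ is a vertex. Consequently
\[
\|f\|_w=\max_i\|f(e_i)\|_1 .
\]
Moreover, left and right symmetry with respect to $v_{e_i}$ coincide with ordinary Birkhoff-James left and right symmetry in $\ell_1^n$. By \cite{BRS, GSP17}, as recalled in the remark, these are: only $0$ is left symmetric, and the right symmetric points are exactly the multiples $\lambda e_j$.

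\textbf{Step 2: part (i).} I apply Theorem~\ref{left}. Its condition (i) says that $x^*(f(e_i))=0$ for all $x^*\in J(e_i)$ and all $i\neq i_0$. By Step 1 this gives $\|f(e_i)\|_1=0$, that is, $f(e_i)=0$ for $i\neq i_0$. Its condition (ii) says that $f(e_{i_0})$ is left symmetric in $\ell_1^n$, which forces $f(e_{i_0})=0$. Hence $f=0$. Conversely, $f=0$ is trivially $nr$-left symmetric, since $0\perp_w g$ and $g\perp_w 0$ hold for all $g$. One caveat: the fact that $\ell_1^n$ has no nonzero left symmetric point requires $n\ge 2$. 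For $n=1$ every point is symmetric, so I will state the corollary for $n\ge2$.

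\textbf{Step 3: part (ii).} Working over $\mathbb R$, as Theorem~\ref{right} requires, I apply that theorem. Its condition (i) becomes
\[
\|f(e_i)\|_1=\|f\|_w\quad\text{for every } i .
\]
Its condition (ii) says that each $f(e_i)$ is right symmetric in $\ell_1^n$, so $f(e_i)=\lambda_i e_{j_i}$. Combining the two gives $|\lambda_i|=\|f\|_w$ for all $i$. Thus $f(e_i)=\pm c\,e_{j_i}$ with a common constant $c\ge 0$. Conversely, any such $f$ satisfies both conditions of Theorem~\ref{right}.

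The statement ``$f(e_i)=\pm e_j$'' should therefore be read up to a common positive scalar: either with the normalization $\|f\|_w=1$, or including $f=0$. I would make this explicit when writing the proof.

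The only genuinely nontrivial input is the identification in Step 1, together with the quoted classification of symmetric points of $\ell_1^n$. Everything else is direct substitution into the two main theorems.
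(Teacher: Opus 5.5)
Your route is the one the paper intends. The corollary is recorded there without proof, as a direct consequence of Theorems~\ref{left} and~\ref{right}, the identity $v_{e_i}=\|\cdot\|_1$, and the quoted classification of symmetric points of $\ell_1^n$. Your two corrections to the statement are also right. First, $n=1$ must be excluded in (i). Second, in (ii) the conclusion must read $f(e_i)=\pm c\,e_{j_i}$ with a common $c=\|f\|_w$, since $nr$-right symmetry is invariant under scaling.

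\textbf{The gap is in Step~2.} The fact you import, that $0$ is the only left symmetric point of $\ell_1^n$, also fails for $n=2$, so restricting to $n\ge2$ does not rescue (i). In $\ell_1^2$ the vector $x=(1,1)$ is smooth with norming functional $(1,1)$. So $x\perp_B y$ forces $y=(t,-t)$, and then
\[
\|y+\lambda x\|_1=|t+\lambda|+|t-\lambda|\ge 2|t|=\|y\|_1\qquad(\lambda\in\mathbb K),
\]
that is, $y\perp_B x$. Over $\mathbb R$ this is just the isometry $(a,b)\mapsto(a+b,a-b)$ of $\ell_1^2$ onto $\ell_\infty^2$, which carries $x$ to $2e_1$; that is a left symmetric point by the paper's own remark on $\ell_\infty^n$.

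Hence $f(e_1)=(1,1)$, $f(e_2)=0$ satisfies both conditions of Theorem~\ref{left} with $k_0=e_1$. You can also confirm directly that it is $nr$-left symmetric:
\begin{itemize}
\item[$\cdot$] $\|f\|_w=2$ is attained only at $(e_1,(1,1))$;
\item[$\cdot$] so $f\perp_w g$ if and only if $g(e_1)=(t,-t)$ for some $t$;
\item[$\cdot$] and then $\|g+\lambda f\|_w=\max\{|t+\lambda|+|t-\lambda|,\ \|g(e_2)\|_1\}\ge\|g\|_w$.
\end{itemize}
So $f\neq0$, and (i) is false for $n=2$.

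The correct hypothesis in (i) is $n\ge3$, and there the classification does hold. The dimension only matters for $x$ of full support. Normalize such an $x$ by a diagonal isometry to have positive entries, and take $y=(n-1)e_i-\sum_{j\ne i}e_j$. Then $x\perp_B y$, while $\lambda\mapsto\|y+\lambda x\|_1$ has derivative $2x_i-\|x\|_1$ at $0$. This derivative cannot vanish for every $i$ once $n\ge3$; the exception $x_1=x_2$ at $n=2$ is exactly the counterexample above.

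The paper's remark on $\ell_1^n$ states the classification without this restriction, so the oversight is inherited. Still, as a proof of your amended statement, Step~2 fails at $n=2$. Part (ii), with your scaling correction, is fine for every $n$, since the right symmetric points of real $\ell_1^2$ are still the multiples of $e_1,e_2$.

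A minor slip: for complex scalars, the norming functional in Step~1 should be $x^*_j=\overline{\operatorname{sgn}(x_j)}\operatorname{sgn}(x_i)$, not $\operatorname{sgn}(x_j)\overline{\operatorname{sgn}(x_i)}$.
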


\begin{cor}
	Let $X = \ell_\infty^n$ and let $K = \operatorname{ext} B_X,$ the set of all extreme point of $B_X.$
	Suppose that $f \in C(K, X).$ Then the following holds
	\begin{itemize}
		\item[(i)] $f$ is $nr$-left symmetric if and only if there exists
		$k_0 \in \operatorname{ext} B_X$ such that $f(k_0)=\pm e_j$ for some
		$1\leq j\leq n$ and
		$f(k)=0$ for every $k\in\operatorname{ext} B_X\setminus\{k_0\}$.
		\item[(ii)] $f$ is $nr$-right symmetric if and only if
		$f(k)\in K$ for every $k\in\operatorname{ext} B_X$.
	\end{itemize}
\end{cor}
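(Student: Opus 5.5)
The plan is to read off both parts directly from Theorems~\ref{left} and~\ref{right}, after computing $J(k)$ and $v_k(\cdot)$ explicitly for $X=\ell_\infty^n$ and $K=\operatorname{ext}B_X$. Since $K$ is finite, it is discrete, so every map $K\to X$ is continuous and $C(K,X)$ is simply the set of all functions $K\to X$. For $k=(\epsilon_1,\dots,\epsilon_n)$ with $\epsilon_i=\pm1$, I will check that
\[
J(k)=\Big\{\sum_{i=1}^n t_i\epsilon_i e_i^*:\ t_i\ge 0,\ \sum_{i=1}^n t_i=1\Big\}.
\]
A functional $x^*=\sum a_ie_i^*$ in $S_{\ell_1^n}$ satisfies $x^*(k)=\sum a_i\epsilon_i=1=\sum|a_i|$ only if each $a_i\epsilon_i=|a_i|$. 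In particular $\epsilon_ie_i^*\in J(k)$ for every $i$, and hence $v_k(x)=\max_i|x_i|=\|x\|_\infty$. This matches the remark preceding the corollary, that $v_u=\|\cdot\|$ at extreme points of $B_{\ell_\infty^n}$, so every $k\in K$ is a vertex. Consequently $\|f\|_w=\max_{k\in K}\|f(k)\|_\infty$, and left or right symmetry with respect to $v_k(\cdot)$ is just left or right symmetry in $\ell_\infty^n$. By the cited results \cite{BRS,GSP17}, the left symmetric points are the scalar multiples of the $e_j$, and the right symmetric points are the scalar multiples of the extreme points.

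For part (i) I apply Theorem~\ref{left}. Condition (i) there requires $x^*(f(k))=0$ for every $x^*\in J(k)$ and every $k\neq k_0$. Taking $x^*=\epsilon_ie_i^*$ for each $i$ forces $f(k)=0$, and conversely $f(k)=0$ clearly suffices. This is also the vertex corollary stated after Theorem~\ref{left}. Condition (ii) requires $f(k_0)$ to be left symmetric in $\ell_\infty^n$, that is, $f(k_0)=\lambda e_j$. Because $nr$-left symmetry is invariant under scaling $f$, the normalized form in the statement is $f(k_0)=\pm e_j$, together with the trivial case $f=0$. I will note explicitly that the statement should be read up to a scalar multiple.

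For part (ii) the space is real, so Theorem~\ref{right} applies. Its condition (i) says that $v_k(f(k))=\|f(k)\|_\infty=\|f\|_w$ for every $k$, i.e.\ $\|f(k)\|_\infty$ is constant, say equal to $c$. Its condition (ii) says that each $f(k)$ is a scalar multiple of an extreme point. Combining the two gives $f(k)=c\,\epsilon(k)$ with $\epsilon(k)\in\operatorname{ext}B_X=K$, where the sign is absorbed because $-K=K$. After normalizing $\|f\|_w=1$ (again by homogeneity), this is exactly $f(k)\in K$ for every $k$. The converse direction is immediate from the same two theorems.

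The only genuinely substantive steps are the description of $J(k)$ and the identification $v_k=\|\cdot\|_\infty$. The main point of care is the scalar normalization, which the statement suppresses: part (i) should allow $\lambda e_j$ rather than only $\pm e_j$, and part (ii) should allow $cK$ for a common constant $c\ge0$ rather than only $K$.
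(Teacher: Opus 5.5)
Your proposal is correct and follows the paper's intended route. The paper gives no separate proof; it treats the corollary as a direct consequence of Theorems~\ref{left} and~\ref{right}, combined with the remark that $v_k=\|\cdot\|_\infty$ at the extreme points of $B_{\ell_\infty^n}$ and the known symmetric points of $\ell_\infty^n$, which is what you do after computing $J(k)$.

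Your caveat that the statement must be read up to scalar multiples is justified: $f=0$, $\lambda e_j$ and $cK$ must all be allowed. Your tacit use of real scalars is also justified, since it is needed both for Theorem~\ref{right} and for $K$ to be finite in part (i).
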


The Hilbert-space setting admits a particularly simple formulation,
since the support functional at each point of $S_{\mathcal H}$ is unique.

\begin{cor}
	Let $\mathcal{H}$ be a Hilbert space and let $K$ be a compact subset of
	$S_{\mathcal{H}}$. For $f\in C(K,\mathcal{H})$, the following statements
	hold:
	\begin{itemize}
		\item[(i)] $f$ is $nr$-left symmetric if and only if there exists
		$k_0\in K$ such that
		$\langle f(k),k\rangle=0$ for every
		$k\in K\setminus\{k_0\}$.
		
		\item[(ii)] $f$ is $nr$-right symmetric if and only if
		$|\langle f(k),k\rangle|=\|f\|_w$ for every $k\in K$.
	\end{itemize}
In addition,  if $K$ is connected, then every $nr$-left symmetric
	$f\in C(K,\mathcal H)$ satisfies $\|f\|_w=0$.
\end{cor}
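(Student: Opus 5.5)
In a Hilbert space every $k\in S_{\mathcal H}$ is a smooth point with $J(k)=\{\langle\cdot,k\rangle\}$, so $v_k(z)=|\langle z,k\rangle|$ for all $z\in\mathcal H$. The plan is to reduce everything to the scalar function $h(k)=\langle f(k),k\rangle$, which is continuous on $K$, and then apply the two main theorems. Two observations come first. By Proposition~\ref{prop:symmetry-smooth-vx}, every vector of $\mathcal H$ is both left and right symmetric with respect to $v_k(\cdot)$ for every $k\in K$, so condition (ii) of Theorem~\ref{left} and of Theorem~\ref{necessary} holds automatically. Moreover $\|f\|_w=\sup_{k\in K}|h(k)|$ and $N_w(f)=\{k:|h(k)|=\|f\|_w\}$.

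For (i), I will apply Theorem~\ref{left}. Its condition (i) reads exactly $\langle f(k),k\rangle=0$ for $k\neq k_0$, and its condition (ii) is automatic, so the equivalence follows at once.

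For (ii), necessity follows from Theorem~\ref{necessary}(i), which is valid for arbitrary (also complex) Banach spaces. For sufficiency I cannot quote Theorem~\ref{right}, since it assumes a real space. Instead I will argue directly from Theorem~\ref{BJ-C(K,X)}. Set $a(k)=h(k)$ and $b(k)=\langle g(k),k\rangle$, and assume $|a(k)|=\|f\|_w$ for all $k$. Then $f\perp_w g$ if and only if
\[
0\in\operatorname{conv}\{\overline{a(k)}\,b(k):k\in K\},
\]
because every $k$ is a norming point for $f$. On the other hand, $g\perp_w f$ if and only if
\[
0\in\operatorname{conv}\{\overline{b(k)}\,a(k):|b(k)|=\|g\|_w\}.
\]
Complex conjugation is $\mathbb R$-linear, so $0$ lies in the convex hull of a set exactly when it lies in the convex hull of the conjugate set. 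Hence $g\perp_w f$ yields $0\in\operatorname{conv}\{\overline{a(k)}\,b(k):k\in N_w(g)\}$, and this set is contained in $\operatorname{conv}\{\overline{a(k)}\,b(k):k\in K\}$. Therefore $f\perp_w g$.

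The case $\|f\|_w=0$ is trivial: then $f\perp_w g$ for every $g$. The main point to check is this conjugation-and-inclusion step, since it is what replaces the real-space hypothesis of Theorem~\ref{right}.

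For the final assertion, let $f$ be $nr$-left symmetric. By (i), $h$ vanishes on $K\setminus\{k_0\}$. If $K$ is connected and has more than one point, then $k_0$ is not isolated, so continuity of $h$ forces $h(k_0)=0$. Hence $\|f\|_w=\sup_{k\in K}|h(k)|=0$. The connectedness claim needs $K$ to have more than one point: for a singleton $K=\{k_0\}$, $K$ is connected but $f(k_0)$ may satisfy $\langle f(k_0),k_0\rangle\neq0$. So I would state the final assertion under the assumption that $K$ is not a singleton.
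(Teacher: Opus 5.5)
Your proposal is correct. For part (i) and the necessity half of (ii) it follows the route the paper intends; the paper states this corollary without proof, as a direct consequence of the main theorems. Smoothness of every $k\in S_{\mathcal H}$ makes condition (ii) of Theorems~\ref{left} and~\ref{necessary} automatic via Proposition~\ref{prop:symmetry-smooth-vx}. Condition (i) then becomes a statement about $h(k)=\langle f(k),k\rangle$.

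Your reliance on Theorem~\ref{necessary}(i) is safe here. Case 2 of its proof uses continuity of $k\mapsto v_k(f(k_1))$. In a general Banach space this map is only upper semicontinuous, but in $\mathcal H$ it equals $|\langle f(k_1),k\rangle|$, which is continuous.

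The genuine difference is in the sufficiency half of (ii). The paper's derivation would invoke Theorem~\ref{right}, which is proved only for real spaces, so it silently covers only real Hilbert spaces. Your direct argument has two ingredients: every point of $K$ is norming for $f$, and $0\in\operatorname{conv}S$ if and only if $0\in\operatorname{conv}\overline{S}$, where $\overline{b(k)}a(k)$ is the conjugate of $\overline{a(k)}b(k)$. This uses only Theorem~\ref{BJ-C(K,X)} and works equally over $\mathbb C$. So your argument proves (ii) in the generality in which the corollary is actually stated.

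Your caveat about the final assertion is also correct, and it is a real correction to the statement. Take $K=\{k_0\}$ and $f(k_0)=k_0$. Then $f\perp_w g$ forces $\langle g(k_0),k_0\rangle=0$, so $\|g\|_w=0$ and trivially $g\perp_w f$. Hence $f$ is $nr$-left symmetric with $\|f\|_w=1$, even though $K$ is connected. Once $K$ has at least two points, your argument (no isolated points, plus continuity of $h$) gives the claim.
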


\vspace{.2cm}

\section*{Application to space of compact operators}

Let $\operatorname{ext}(B_{X^*})$ be the set of all extreme points of $B_{X^*}.$ We define
$E_{X^*}$ to be a maximal subset of $\operatorname{ext}(B_{X^*})$
containing no two distinct unimodular scalar multiples; equivalently,
\[
E_{X^*}\subseteq\operatorname{ext}(B_{X^*}),\qquad
x^*,y^*\in E_{X^*},\quad
y^*=\lambda x^*,  |\lambda|=1
\quad\Longrightarrow\quad
x^*=y^*,
\]
and
\[
\operatorname{ext}(B_{X^*})
=
\{\lambda x^*:x^*\in E_{X^*},\,|\lambda|=1 \}.
\]

\begin{prop}\label{prop:K(X)}
	Let $X$ be a reflexive Banach space and let $E_{X^*}$ be a set of
	representatives of $\operatorname{ext}(B_{X^*})$ modulo unimodular
	scalar multiples. Then
	\[
	\Phi:\mathcal K(X)\longrightarrow
	C(E_{X^*},X^*),
	\qquad
	\Phi(T)(x^*)=T^*x^*,
	\]
	defines a conjugate-linear isometric embedding when
	$C(E_{X^*},X^*)$ is endowed with the semi-norm $\|\cdot\|_w$.
\end{prop}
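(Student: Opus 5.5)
The plan is to check three things in turn: that $\Phi$ is well defined, that it has the stated (conjugate-)linearity, and that it is isometric, in the sense $\|\Phi(T)\|_w = w(T)$ where $w(\cdot)$ is the numerical radius seminorm on $\mathcal K(X)$.

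Here the ambient Banach space of the $C(K,X)$-framework is $X^*$, and $K=E_{X^*}\subset S_{X^*}$. For $x^*\in E_{X^*}$ the support set is
\[
J(x^*)=\{x^{**}\in S_{X^{**}}:x^{**}(x^*)=1\}.
\]
By reflexivity this equals $\{x\in S_X: x^*(x)=1\}$, acting by evaluation. Hence
\[
\Pi(E_{X^*},X^*)=\{(x^*,x): x^*\in E_{X^*},\ x\in S_X,\ x^*(x)=1\}.
\]
Consequently, for $T\in\mathcal K(X)$,
\[
\|\Phi(T)\|_w=\sup\{|(T^*x^*)(x)|:(x^*,x)\in\Pi(E_{X^*},X^*)\}=\sup\{|x^*(Tx)|: x^*\in E_{X^*},\ x\in S_X,\ x^*(x)=1\}.
\]

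\emph{Well-definedness and linearity.} Continuity of $x^*\mapsto T^*x^*$ on $E_{X^*}$ is immediate in the norm topology, since $\|T^*x^*-T^*y^*\|\le\|T\|\|x^*-y^*\|$. Because $T$ is compact, $T^*$ is also weak$^*$-to-norm continuous on $B_{X^*}$. I will use this if $E_{X^*}$ has to be equipped with the weak$^*$ topology to be compact; this topological point is where I expect to be least certain, and I would adopt whichever topology the $C(K,X)$ framework needs. Additivity of $\Phi$ is clear. With the Banach adjoint, $\Phi$ is linear in $T$; it becomes conjugate-linear under the Riesz-type identification in the Hilbert case. Either way the seminorm $\|\cdot\|_w$ is unaffected, because $\|\Phi(\lambda T)\|_w=|\lambda|\,\|\Phi(T)\|_w$.

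\emph{Isometry.} The inequality $\|\Phi(T)\|_w\le w(T)$ is trivial, since the supremum defining $\|\Phi(T)\|_w$ runs over a subset of the pairs used in $w(T)$. For the reverse inequality, which is the main step, fix $x\in S_X$ and consider the face $F_x=\{y^*\in B_{X^*}: y^*(x)=1\}$. This set is nonempty, convex and weak$^*$-compact, and it is a face of $B_{X^*}$. The function $y^*\mapsto |y^*(Tx)|$ is a weak$^*$-continuous convex function, so by Krein--Milman (Bauer's maximum principle) its maximum over $F_x$ is attained at an extreme point $y^*$ of $F_x$. Since $F_x$ is a face, $y^*\in\operatorname{ext}(B_{X^*})$.

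Write $y^*=\lambda x^*$ with $x^*\in E_{X^*}$ and $|\lambda|=1$, and put $x'=\lambda x\in S_X$. Then $x^*(x')=\lambda x^*(x)=y^*(x)=1$, so $(x^*,x')\in\Pi(E_{X^*},X^*)$. Moreover
\[
|x^*(Tx')|=|\lambda x^*(Tx)|=|y^*(Tx)|=\max_{z^*\in F_x}|z^*(Tx)|.
\]
Taking the supremum over $x\in S_X$ gives $w(T)\le\|\Phi(T)\|_w$, and therefore $\|\Phi(T)\|_w=w(T)$. This is precisely the sense in which $\Phi$ is an isometric embedding for the semi-norms.
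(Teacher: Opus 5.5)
Your proof is correct and follows the paper's route: reflexivity identifies $J(x^*)$ with $\{x\in S_X: x^*(x)=1\}$, and Krein--Milman on the face $F_x$ (your Bauer maximum principle) moves the supremum defining $w(T)$ onto $\operatorname{ext}(B_{X^*})$. As in the paper, compactness of $T^*$ gives weak$^*$-to-norm continuity, and your rotation $x'=\lambda x$ spells out the passage from $\operatorname{ext}(B_{X^*})$ to $E_{X^*}$ that the paper only calls easy.

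You are also right that $\Phi$ is linear rather than conjugate-linear: the paper's formula $(\alpha T+\beta S)^*=\overline{\alpha}\,T^*+\overline{\beta}\,S^*$ is the Hilbert-adjoint rule and is false for Banach adjoints over $\mathbb{C}$. State this plainly instead of hedging; it affects neither the seminorm identity nor the transfer of Birkhoff--James orthogonality.
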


\begin{proof}
	Since $X$ is reflexive,  it is easy to observe that 
	 for $T\in\mathcal K(X)$,
	\[
	\begin{aligned}
		\|\Phi(T)\|_w
		&=
		\sup  \{
		|x^*(Tx)|:
		x\in S_X,\,x^*\in E_{X^*},\,x^*(x)=1 \}\\
	&=
		\sup \{
		|x^*(Tx)|:
		x\in S_X,\,x^*\in\operatorname{ext}(B_{X^*}),\
		x^*(x)=1 \}.
	\end{aligned}
	\]
	For each $x\in S_X$, consider the weak$^*$-compact face $
	F_x=\{x^*\in B_{X^*}:x^*(x)=1\}. $
	By the Krein-Milman theorem, $
	F_x=\overline{\operatorname{conv}}^{\,w^*}\operatorname{ext}(F_x),$
	and $
	\operatorname{ext}(F_x)
	\subseteq
	\operatorname{ext}(B_{X^*})\cap F_x. $
	Since $x^*\to x^*(Tx)$ is weak$^*$-continuous, we obtain
	\[
	\sup_{x^*\in F_x}|x^*(Tx)|
	=
	\sup_{\substack{x^*\in\operatorname{ext}(B_{X^*})\\x^*(x)=1}}
	|x^*(Tx)|.
	\]
	Therefore,
	\[
	\begin{aligned}
		\|\Phi(T)\|_w
	&	=	\sup\{	|x^*(Tx)|: x\in S_X, x^*\in B_{X^*}, x^*(x)=1\}\\
	&	=
		w(T).
	\end{aligned}
	\]
		It remains to verify that $\Phi(T)\in C(E_{X^*},X^*)$.  
		Let $\{x_\alpha^*\}$
	be a net in $E_{X^*}$ such that $	x^*_\alpha\overset{w^*}{\longrightarrow}x^*.$
		As $X$ is reflexive and $T^*$ is compact, $
	\|T^*x^*_\alpha-T^*x^*\|\longrightarrow0. $
	Thus $x^*\to T^*x^*$ is norm-continuous on
	$E_{X^*}$. 
		Finally, for $\alpha,\beta\in\mathbb{K}$ and $S,T\in\mathcal{K}(X)$,
	\[
	\begin{aligned}
		\Phi(\alpha T+\beta S)(x^*)
		=
		(\alpha T+\beta S)^*x^*
		=
		\overline{\alpha}\,T^*x^*
		+\overline{\beta}\,S^*x^*,
	\end{aligned}
	\]
	and hence
	\[
	\Phi(\alpha T+\beta S)
	=
	\overline{\alpha}\,\Phi(T)
	+\overline{\beta}\,\Phi(S).
	\]
 Hence $\Phi$
	is a conjugate-linear isometric embedding.
\end{proof}

Since $\Phi$ is a conjugate-linear isometry, it preserves Birkhoff-James
orthogonality and consequently, the local symmetric points. In particular,
if $\Phi(T)$ is left (respectively, right) symmetric, then $T$ is left
(respectively, right) symmetric. Hence, the results obtained in the
previous section for $C(K, X)$ can be applied to $\Phi(T)$ and
transferred back to $T$.

\begin{prop}
	Let $X$ be a reflexive Banach space. Suppose that there exists $x_0^* \in E_{X^*}$ such that
	\begin{itemize}
		\item[(i)] for any $x^* \in E_{X^*} \setminus\{x_0^*\}$ and for any $x \in S_{X}$ with $x^*(x)=1,$ we have  $ x^*(Tx)=0$ and  
		
		\item[(ii)] $T^*(x_0^*)$ is left symmetric with respect to $v_{x_0^*}(\cdot).$
	\end{itemize}
	Then $T \in \mathcal{K}(X)$ is $nr$-left symmetric. 
\end{prop}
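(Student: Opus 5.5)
The plan is to transfer the problem to $C(E_{X^*},X^*)$ through the map $\Phi$ of Proposition~\ref{prop:K(X)} and then invoke the sufficient part of Theorem~\ref{left}. Here $K=E_{X^*}$ sits inside $S_{X^*}$, and the ambient Banach space is $X^*$. Compactness of $K$ is needed for Theorem~\ref{left}. Rather than assume it, I will simply check that the sufficiency argument in Theorem~\ref{left} never uses compactness. That argument only uses Theorem~\ref{BJ-C(K,X)} together with condition (i), and I will note this when applying it.

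\textbf{Step 1: verify condition (i) for $\Phi(T)$.} Since $X$ is reflexive, $J(x^*)\subset S_{X^{**}}$ is identified with $\{x\in S_X: x^*(x)=1\}$. The pairing is
\[
x(\Phi(T)(x^*))=(T^*x^*)(x)=x^*(Tx).
\]
So hypothesis (i) says exactly that $\xi(\Phi(T)(x^*))=0$ for every $x^*\neq x_0^*$ and every $\xi\in J(x^*)$. This is condition (i) of Theorem~\ref{left} with $k_0=x_0^*$.

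\textbf{Step 2: verify condition (ii).} Hypothesis (ii) is literally condition (ii) of Theorem~\ref{left}, because $\Phi(T)(x_0^*)=T^*x_0^*$ and the semi-norm $v_{x_0^*}$ is computed in $X^*$ with $J(x_0^*)\subset X^{**}=X$.

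\textbf{Step 3: conclude that $\Phi(T)$ is $nr$-left symmetric.} From Steps 1 and 2, the sufficient part of Theorem~\ref{left} applies. That part only needs $\|\Phi(T)\|_w=v_{x_0^*}(T^*x_0^*)$ and the orthogonality characterization of Theorem~\ref{BJ-C(K,X)}. The one place compactness enters Theorem~\ref{BJ-C(K,X)} is to replace limits by attained values. If $E_{X^*}$ is not closed, I would fall back on the limit form of Theorem~\ref{thm:semi-normed-numerical-range}. Under condition (i), every norming sequence either tends to $x_0^*$ or contributes $0$, so the same dichotomy used in the proof of Theorem~\ref{left} still goes through.

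\textbf{Step 4: pull the conclusion back to $T$.} This is the main point. Take $S\in\mathcal K(X)$ with $T\perp_w S$, meaning $w(T+\lambda S)\ge w(T)$ for all $\lambda$. Since $\Phi$ is conjugate-linear and isometric,
\[
\|\Phi(T)+\bar\lambda\Phi(S)\|_w\ge\|\Phi(T)\|_w \quad \text{for all } \lambda,
\]
and $\bar\lambda$ ranges over all of $\mathbb K$. Hence $\Phi(T)\perp_w\Phi(S)$. Left symmetry of $\Phi(T)$ then gives $\Phi(S)\perp_w\Phi(T)$, and the same conjugation argument gives $S\perp_w T$ in $w(\cdot)$. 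Therefore $T$ is $nr$-left symmetric. Only orthogonality among elements in the image of $\Phi$ is used here, so it does not matter that $\Phi$ need not be surjective.
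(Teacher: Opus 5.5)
Your proposal is correct and takes the same route as the paper. The paper gives no separate proof: it only remarks that the conjugate-linear isometry $\Phi$ of Proposition~\ref{prop:K(X)} preserves Birkhoff--James orthogonality, so the sufficient part of Theorem~\ref{left}, applied to $\Phi(T)$ with $k_0=x_0^*$, transfers back to $T$, and your Steps 1--4 spell this out. Checking that compactness of $E_{X^*}$ is not needed is a real improvement, since the paper assumes it silently although $E_{X^*}$ is generally not norm-compact; but say ``eventually equal to $x_0^*$'' rather than ``tends to $x_0^*$'', because norming pairs $(k_n,\xi_n)$ with $k_n\neq x_0^*$ contribute $0$ by hypothesis (i) whether or not $k_n\to x_0^*$.
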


\begin{prop}
Let $X$ be a reflexive real Banach space. Suppose that for any $x^* \in E_{X^*} $ the following hold
\begin{itemize}
	\item[(i)]  there exists $x \in S_X$ such that $x^*(x)=1,$ and   $ |x^*(Tx)|=\|T\|_v.$ 
	
	\item[(ii)]  $T^*(x^*)$ is left symmetric with respect to $v_{x^*}(\cdot).$
\end{itemize}
Then $T \in \mathcal{K}(X)$ is $nr$-right symmetric. 
\end{prop}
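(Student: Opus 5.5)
The plan is to transfer the problem to $C(E_{X^*},X^*)$ through the isometry $\Phi$ of Proposition~\ref{prop:K(X)}. There I would verify directly that $f:=\Phi(T)$, $f(x^*)=T^*x^*$, is $nr$-right symmetric, and then pull the conclusion back to $T$. Because $X$ is reflexive, $J(x^*)=\{x\in S_X: x^*(x)=1\}$ for $x^*\in E_{X^*}$, so $v_{x^*}(y^*)=\sup\{|y^*(x)|: x\in S_X,\ x^*(x)=1\}$. Moreover $\|f\|_w=w(T)$, which I read as the quantity written $\|T\|_v$ in the statement. Hypothesis (i) then says exactly that $v_{x^*}(f(x^*))=\|f\|_w$ for every $x^*\in E_{X^*}$, i.e.\ condition (i) of Theorem~\ref{right}.

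I will not try to prove right symmetry of each $T^*x^*$ and quote Theorem~\ref{right}, since hypothesis (ii) only gives left symmetry. Instead I would rerun the proof of Theorem~\ref{right} by hand. Let $g\perp_w f$. By Theorem~\ref{BJ-C(K,X)} (real case) there are pairs $(k_1,x_1)$ and $(k_2,x_2)$ norming $g$ with
\[
\operatorname{sgn}(x_1(g(k_1)))\,x_1(f(k_1))\ge 0,\qquad \operatorname{sgn}(x_2(g(k_2)))\,x_2(f(k_2))\le0 .
\]
The target is to find $y_1\in J(k_1)$ and $y_2\in J(k_2)$ norming $f$ with
\[
\operatorname{sgn}(y_1(f(k_1)))\,y_1(g(k_1))\ge0,\qquad \operatorname{sgn}(y_2(f(k_2)))\,y_2(g(k_2))\le0 .
\]
Theorem~\ref{BJ-C(K,X)} then gives $f\perp_w g$.

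The main obstacle is producing $y_1$ and $y_2$ from left symmetry. I would argue by contradiction at $k_1$. Suppose $\operatorname{sgn}(y(f(k_1)))\,y(g(k_1))<0$ for all $y\in\mathcal A(k_1)=\{y\in J(k_1): |y(f(k_1))|=\|f\|_w\}$. This set is weak$^*$-compact, so the maximum $-c<0$ of $y\mapsto \operatorname{sgn}(y(f(k_1)))\,y(g(k_1))/\|f\|_w$ over it is attained. Put $u=g(k_1)+c f(k_1)$. The set $\{\operatorname{sgn}(y(f(k_1)))\,y(u): y\in\mathcal A(k_1)\}$ then has supremum $0$ and lies in $(-\infty,0]$, so it contains $0$. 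Proposition~\ref{prop:1} gives $f(k_1)\perp_{v_{k_1}}u$, and left symmetry gives $u\perp_{v_{k_1}}f(k_1)$. Proposition~\ref{prop:1} now supplies $z\in J(k_1)$, norming for $u$, with $\operatorname{sgn}(z(u))\,z(f(k_1))\ge0$.

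I would then compare this with the given $x_1$, using $v_{k_1}(u)\le v_{k_1}(g(k_1))+c\|f\|_w$ together with $v_{k_1}(g(k_1))=\|g\|_w$. The aim is to show that $z$ must also norm $f(k_1)$ and $g(k_1)$ with compatible signs, which would contradict the choice of $c$. The symmetric argument at $k_2$, with $u=g(k_2)-c'f(k_2)$, handles $y_2$. I expect this sign bookkeeping to be the delicate part. If it fails, I would first check whether hypothesis (i) forces $\mathcal A(k)$ to be large enough that left symmetry of $f(k)$ already implies its right symmetry for $v_k$, and then simply invoke Theorem~\ref{right}.

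Finally, $\Phi$ is a conjugate-linear isometry, so $S\perp_w T$ if and only if $\Phi(S)\perp_w\Phi(T)$. Hence $nr$-right symmetry of $\Phi(T)$ transfers back to $T$. I also have to note that the relevant compactness (of $E_{X^*}$ and of the sets $\mathcal A(k)$) is used only through the attainment arguments above, where weak$^*$-compactness of $J(k)$ suffices.
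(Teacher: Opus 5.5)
\textbf{There is a genuine gap, and it cannot be repaired.} Your middle step tries to produce $y_1,y_2$ from \emph{left} symmetry of $f(k)$ with respect to $v_k(\cdot)$. Your fallback hopes that (i) makes left symmetry imply right symmetry. Both must fail, because the proposition as literally stated is false.

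Take real $X=\ell_\infty^2$, so $X^*=\ell_1^2$ and $E_{X^*}=\{e_1,e_2\}$. Then $J(e_1)=\{(1,t):|t|\le1\}$, $J(e_2)=\{(t,1):|t|\le1\}$, and $v_{e_1}=v_{e_2}=\|\cdot\|_1$ on $\ell_1^2$.

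Let $T=\begin{pmatrix}1&1\\1&1\end{pmatrix}$. Then $f=\Phi(T)$ is constantly $(1,1)$ and $w(T)=\|f\|_w=2$.
\begin{itemize}
\item Hypothesis (i) holds with $x=(1,1)$ for both $e_1$ and $e_2$.
\item Hypothesis (ii) holds. The point $(1,1)$ is smooth in $\ell_1^2$, so $(1,1)\perp_B y$ forces $y=s(1,-1)$. Since $|s+\lambda|+|s-\lambda|\ge2|s|$, we get $y\perp_B(1,1)$.
\end{itemize}

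Now let $S=\begin{pmatrix}1&0\\1&0\end{pmatrix}$, so $g=\Phi(S)$ is constantly $(1,0)$.
\begin{itemize}
\item $\|g+\lambda f\|_w=|1+\lambda|+|\lambda|\ge1=\|g\|_w$, so $S\perp_w T$.
\item $\|f-g\|_w=1<2$, so $T\not\perp_w S$.
\end{itemize}
Hence $T$ is not $nr$-right symmetric.

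In your bookkeeping, $\mathcal A(e_1)=\mathcal A(e_2)=\{(1,1)\}$, and there $\operatorname{sgn}(y(f(k)))\,y(g(k))=1>0$. So the required $y_2$ does not exist. Running your construction at $k_2=e_1$ (with $x_2=(1,-1)$) gives $c'=1/2$ and $u=(1/2,-1/2)$. Left symmetry then returns $z=(1,-1)$, which does not norm $f(k_2)$ at all, since $z(f(k_2))=0$. So no contradiction with the choice of $c'$ is available.

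The fallback fails as well. Condition (i) holds here, yet $(1,1)$ is not right symmetric for $\|\cdot\|_1$: $(1,0)\perp_B(1,1)$ but $(1,1)\not\perp_B(1,0)$.

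\textbf{What the statement should say.} Hypothesis (ii) is evidently meant to say that $T^*x^*$ is \emph{right} symmetric with respect to $v_{x^*}(\cdot)$, and $\|T\|_v$ means $w(T)$. The paper obtains the proposition simply by applying Theorem~\ref{right} to $\Phi(T)\in C(E_{X^*},X^*)$. It then pulls the conclusion back through the isometric embedding of Proposition~\ref{prop:K(X)}.

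With that reading, your first paragraph together with your last paragraph is already the whole proof. The first paragraph supplies $J(x^*)=\{x\in S_X:x^*(x)=1\}$, $\|\Phi(T)\|_w=w(T)$, and (i) as condition (i) of Theorem~\ref{right}. The middle part should be replaced by a direct citation of Theorem~\ref{right}.

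\textbf{A smaller inaccuracy.} It is not only weak$^*$-compactness of each $J(k)$ that is used. Extracting the norming pairs $(k_1,x_1),(k_2,x_2)$ in Theorem~\ref{BJ-C(K,X)} relies on compactness of $\Pi(K,X)$. That in turn requires $E_{X^*}$ to be compact in $S_{X^*}$, as it is, for example, when $E_{X^*}$ is finite.
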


As a further  application of our results, we recover the characterization
of the left and right symmetric points of
$\mathcal{L}(\ell_1^n)$ and $\mathcal{L}(\ell_\infty^n)$ obtained in
\cite{GMPS}. Indeed, these results follow directly from
Corollary~\ref{l_1}, which gives the corresponding characterization for
$C(E,\ell_1^n)$.\\

\noindent \textbf{Symmetric points of $\mathcal{L}(\ell_\infty^n)$:}
Whenever $X= \ell_\infty^n$,  we can take $E_{X^*}= \{e_1, \ldots, e_n\}.$ Since each of $e_i$ is a vertex, the numerical radius defined on $\mathcal{L}(X)$ is a norm. Observe that  the map defined in Proposition \ref{prop:K(X)} is also surjective, which makes $\mathcal{L}(\ell_\infty^n)$ with the numerical radius norm  is a isometrically isomorphic to $C(E_{X^*}, \ell_1^n),$ endowed with the norm $\|\cdot\|_w.$ Hence, the characterization of left and right symmetric points follows immediately from Corollary \ref{l_1}.

\begin{theorem}
Let 	$T \in \mathcal{L}(\ell_\infty^n).$ Then the following holds: 
\begin{itemize}
\item [(i)]  $T$ is $nr$-left symmetric if and only if $T=0.$ 
\item[(ii)] $T$ is $nr$-right symmetric if and only if $T^*(e_i)= \pm e_j,$ where $i,j \in \{1,2, \ldots, n\}.$
\end{itemize} 
\end{theorem}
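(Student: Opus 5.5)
Via the isometry of Proposition~\ref{prop:K(X)}, I would transfer the statement to $C(E_{X^*},\ell_1^n)$ and then read both parts off Corollary~\ref{l_1}.

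\textbf{Identification.} For $X=\ell_\infty^n$ we have $X^*=\ell_1^n$, and $\operatorname{ext}(B_{\ell_1^n})=\{\pm e_i\}$, so $E_{X^*}=\{e_1,\dots,e_n\}$. Every operator on a finite-dimensional space is compact, so $\mathcal K(X)=\mathcal L(\ell_\infty^n)$. Proposition~\ref{prop:K(X)} then gives a conjugate-linear isometry
\[
\Phi:(\mathcal L(\ell_\infty^n),w)\to (C(E_{X^*},\ell_1^n),\|\cdot\|_w),\qquad \Phi(T)(e_i)=T^*e_i .
\]
To apply it, I need $E_{X^*}$ to sit inside $S_{\ell_1^n}$, which is clear. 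I also need $J(e_i)$ to be computed in $(\ell_1^n)^*=\ell_\infty^n$: it is the set of $\pm1$ vectors with $i$-th coordinate $1$, which is exactly the setting of Corollary~\ref{l_1}. Since $E_{X^*}$ is finite and discrete, every function on it is continuous. Hence $\Phi$ is surjective: any prescribed values $T^*e_i=y_i$ determine $T^*$, and therefore $T$.

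\textbf{Transfer of orthogonality.} For real or complex scalars, $\|\Phi(T)+\lambda\Phi(S)\|_w=w(T+\bar\lambda S)$, and $\bar\lambda$ ranges over all of $\mathbb K$. Hence $T\perp_w S$ if and only if $\Phi(T)\perp_w\Phi(S)$. Because $\Phi$ is bijective, $T$ is $nr$-left (respectively right) symmetric if and only if $\Phi(T)$ is. This is the step that requires surjectivity: without it we would only get one implication, as the paper notes.

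\textbf{Conclusion.} By Corollary~\ref{l_1}(i), $\Phi(T)$ is $nr$-left symmetric iff $\Phi(T)=0$, i.e.\ $T^*=0$, i.e.\ $T=0$. By Corollary~\ref{l_1}(ii), $\Phi(T)$ is $nr$-right symmetric iff each $T^*e_i=\pm e_{j}$ for some $j$ depending on $i$, which is (ii).

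\textbf{Main obstacle.} The delicate point is Corollary~\ref{l_1} itself, whose $\ell_1^n$ case relies on Theorem~\ref{right}. That theorem is proved for real spaces, so for complex scalars the right-symmetric part should be read with $\pm$ replaced by unimodular multiples, or restricted to real spaces. I would first check this scalar field issue, and verify that $v_{e_i}$ equals the $\ell_1$ norm (the vertex remark), before invoking the corollary.
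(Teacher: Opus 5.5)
Your reduction is the paper's own argument. With $E_{X^*}=\{e_1,\dots,e_n\}$, the map $\Phi$ of Proposition~\ref{prop:K(X)} is a bijective isometry $(\mathcal L(\ell_\infty^n),w)\to(C(E_{X^*},\ell_1^n),\|\cdot\|_w)$, and both parts are read off Corollary~\ref{l_1}. Your remark that surjectivity is what lets symmetry pass in both directions is a point the paper leaves implicit. Two minor slips, neither of which affects the argument, and $v_{e_i}=\|\cdot\|_1$ is correct:
the Banach-space adjoint is linear, $(\alpha T)^*=\alpha T^*$, so $\Phi$ is in fact linear; and $J(e_i)\subset\ell_\infty^n$ is the whole face $\{x^*:x^*_i=1,\ \|x^*\|_\infty\le 1\}$, of which your sign vectors are only the extreme points (in the real case).

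The genuine gap, shared by the paper's proof, is that you quote Corollary~\ref{l_1} as a black box, and both of its parts are misstated. Your last step therefore ``proves'' false statements.
\begin{itemize}
\item For (ii), the $nr$-right symmetric operators form a cone. For $c\neq0$ we have $S\perp_w cT\iff S\perp_w T$ and $cT\perp_w S\iff T\perp_w S$, and $0$ is trivially $nr$-right symmetric. So $0$ is a counterexample, and so is $2I$ (since $I$ is, by the ``if'' direction), although $(2I)^*e_i\neq\pm e_j$. If you derive the corollary from Theorem~\ref{right} instead, condition (i) there gives $\|T^*e_i\|_1=w(T)$ for \emph{every} $i$, and condition (ii) gives $T^*e_i=\lambda_i e_{j_i}$. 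The correct conclusion is therefore $T^*e_i=\pm w(T)\,e_{j_i}$ for each $i$.
\item For (i), the input ``$\ell_1^n$ has no nonzero left symmetric point'' holds only for $n\ge 3$. In $\ell_1^2$ the point $(1,1)$ is smooth, $(1,1)\perp_B y$ forces $y=(s,-s)$, and then $y\perp_B(1,1)$. Accordingly, by Theorem~\ref{left}, $T(x_1,x_2)=(x_1+x_2,0)$ is a nonzero $nr$-left symmetric operator on $\ell_\infty^2$. You can check this directly: $T\perp_w S$ forces $s_{11}+s_{12}=0$, and then $S\perp_w T$ holds whichever row of $S$ attains $w(S)=\max_i\sum_j|s_{ij}|$. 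For $n=1$ every operator is $nr$-symmetric. So (i) needs $n\ge3$.
\item Your concern about the scalar field is justified. For complex scalars, Theorem~\ref{necessary} only gives the necessity of $T^*e_i=\mu_i w(T)e_{j_i}$ with $|\mu_i|=1$. Sufficiency must be checked by hand. This is easy here, because $\|\cdot\|_w=\max_i\|f(e_i)\|_1$ makes the space an $\ell_\infty$-sum of copies of $\ell_1^n$.
\end{itemize}
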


\noindent\textbf{Symmetric points of $\mathcal{L}(\ell_1^n)$.}
Let $X=\ell_1^n$ and $E_X=\{e_1,\ldots,e_n\}$. Again, each $e_i$ is a
vertex, and hence the numerical radius is a norm on
$\mathcal{L}(\ell_1^n)$. The map
\[
\Phi:\mathcal{L}(\ell_1^n) \to C(E_X,\ell_1^n),
\qquad
\Phi(T)(e_i)=Te_i,
\]
is a surjective linear isometry with respect to the numerical radius norm
on $\mathcal{L}(\ell_1^n)$ and $\|\cdot\|_w$ on $C(E_X,\ell_1^n)$.
Consequently, Corollary~\ref{l_1} gives the following characterization.

\begin{theorem}
	Let 	$T \in \mathcal{L}(\ell_1^n).$ Then the following holds: 
	\begin{itemize}
		\item [(i)]  $T$ is $nr$-left symmetric if and only if $T=0.$ 
		\item[(ii)] $T$ is $nr$-right symmetric if and only if $T(e_i)= \pm e_j,$ where $i,j \in \{1,2, \ldots, n\}.$
	\end{itemize} 
\end{theorem}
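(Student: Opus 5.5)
The plan is to read this theorem off from Corollary~\ref{l_1} via the isometry $\Phi$, exactly as in the $\ell_\infty^n$ case. Take $X=\ell_1^n$ and $K=E_X=\{e_1,\ldots,e_n\}\subset S_{\ell_1^n}$. The map is $\Phi(T)(e_i)=Te_i$. Since $K$ is finite, $C(E_X,\ell_1^n)$ is just the space of $n$-tuples $(f(e_1),\ldots,f(e_n))$ of vectors in $\ell_1^n$.

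\textbf{Step 1: $\Phi$ is a linear bijection.} An operator on $\ell_1^n$ is determined by its values on the basis. Conversely, any choice of the vectors $Te_i$ defines a linear operator, automatically bounded in finite dimensions. So $\Phi$ is a linear bijection.

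\textbf{Step 2: $\Phi$ is an isometry.} I would show $w(T)=\|\Phi(T)\|_w$. The inequality $\|\Phi(T)\|_w\le w(T)$ is immediate, since each pair $(e_i,x^*)$ with $x^*\in J(e_i)$ is admissible in the definition of $w(T)$.

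For the reverse inequality, fix $x\in S_{\ell_1^n}$ and $x^*\in J(x)$. Then $x^*_i=\operatorname{sgn}(x_i)$ whenever $x_i\ne0$, and $|x^*_j|\le1$ otherwise. Write $Tx=\sum_i x_i Te_i$, so that
\[
x^*(Tx)=\sum_{i:\,x_i\neq 0}|x_i|\,\overline{\operatorname{sgn}(x_i)}\,x^*(Te_i).
\]
Set $y_i^*=\overline{\operatorname{sgn}(x_i)}\,x^*$. Then $y_i^*(e_i)=1$ and $\|y_i^*\|=1$, so $y_i^*\in J(e_i)$. The expression above is therefore a convex combination (weights $|x_i|$) of the numbers $y_i^*(Te_i)$. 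Hence $|x^*(Tx)|\le\max_i v_{e_i}(Te_i)=\|\Phi(T)\|_w$.

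Thus $\Phi$ is a surjective linear isometry. This also shows that $w(\cdot)$ is a norm on $\mathcal L(\ell_1^n)$, since each $e_i$ is a vertex and $\|\cdot\|_w$ is then a norm.

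\textbf{Step 3: transfer symmetry.} A surjective linear isometry preserves Birkhoff--James orthogonality in both directions:
\[
T\perp_w S \iff \Phi(T)\perp_w \Phi(S).
\]
Because $\Phi$ is onto, every $g\in C(E_X,\ell_1^n)$ is of the form $\Phi(S)$. Hence $T$ is $nr$-left (respectively right) symmetric if and only if $\Phi(T)$ is. This surjectivity is what upgrades the one-directional remark after Proposition~\ref{prop:K(X)} to an equivalence.

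\textbf{Step 4: apply Corollary~\ref{l_1}.} Part (i) of Corollary~\ref{l_1} gives $\Phi(T)=0$, i.e.\ $T=0$. Part (ii) gives $Te_i=\pm e_j$ for each $i$ and some $j$, which is exactly the stated characterization.

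\textbf{Main obstacle.} The only real content is Corollary~\ref{l_1} itself, which the paper states without proof, so I would verify it if needed.

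For (i), $J(e_i)$ is the set of all $x^*$ with $x^*_i=1$, and $v_{e_i}$ is the full $\ell_1$ norm. So condition (i) of Theorem~\ref{main-left} forces $f(e_i)=0$ for every $i\ne i_0$. Condition (ii) then requires $f(e_{i_0})$ to be left symmetric in $\ell_1^n$, and the only such point is $0$.

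For (ii), condition (i) of Theorem~\ref{main-right} gives $\|f(e_i)\|_1=\|f\|_w$ for every $i$. Condition (ii) makes each $f(e_i)$ a multiple of a basis vector, by the remark on $\ell_1^n$. This yields $f(e_i)=\pm c\,e_{j(i)}$ with a common constant $c=\|f\|_w$.

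There is a normalization gap here. The statement writes $\pm e_j$, so it should be read up to a common scalar factor $c$, with $c=0$ allowed. I expect to have to phrase the result that way.
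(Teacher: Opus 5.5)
Your argument is the paper's own. The paper simply asserts that $\Phi(T)(e_i)=Te_i$ is a surjective linear isometry from $(\mathcal{L}(\ell_1^n),w)$ onto $(C(E_X,\ell_1^n),\|\cdot\|_w)$, and then reads the theorem off Corollary~\ref{l_1}. Your Step~2 supplies what the paper leaves unsaid: you write $x^*(Tx)$ as a convex combination of values $y_i^*(Te_i)$ with $y_i^*\in J(e_i)$. Your remark that surjectivity is what turns the transfer into an equivalence is also a genuine addition.

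In the complex case your conjugates are misplaced. One has $x^*(e_i)=\overline{\operatorname{sgn}(x_i)}$, so you should take $y_i^*=\operatorname{sgn}(x_i)\,x^*$, giving $x^*(Tx)=\sum_i|x_i|\,y_i^*(Te_i)$. This is harmless. Your normalization caveat for (ii) is correct. Right symmetry is invariant under scaling, so $0$ and $2I$ are $nr$-right symmetric. The accurate statement is therefore $Te_i=\pm c\,e_{j(i)}$ with one common $c\ge 0$, over $\mathbb{R}$ as Theorem~\ref{main-right} requires.

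There is, however, a second defect in the statement that you reproduced rather than caught. In verifying Corollary~\ref{l_1}(i), you use that $0$ is the only left symmetric point of $\ell_1^n$. This holds for $n\ge 3$, but it is false for $n=2$, and trivially false for $n=1$.

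In $\ell_1^2$, the point $(1,1)$ is smooth with unique support functional $(1,1)$. So $(1,1)\perp_B y$ forces $y=t(1,-1)$. The point $(1,-1)$ is smooth with support functional $(1,-1)$, which annihilates $(1,1)$. Hence $t(1,-1)\perp_B(1,1)$, and $(1,1)$ is left symmetric.

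Apply Theorem~\ref{main-left} with $k_0=e_1$, using $v_{e_1}=\|\cdot\|_1$. The operator with $Te_1=e_1+e_2$ and $Te_2=0$ is then a nonzero $nr$-left symmetric operator on $\ell_1^2$. So part (i) must be stated for $n\ge 3$, both in your proof and in the paper.
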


\section*{Conclusion}

The results obtained in this article  provide a structural description of symmetric points arising from numerical-radius orthogonality in $C(K,X)$. In comparison with the symmetric points arising from the supremum norm, studied in \cite{PSS}, and the $\rho$-orthogonal symmetric points considered in \cite{SSP}, the characterizations
obtained here exhibit a distinct structural behaviour. In particular, the numerical-radius structure allows the symmetricity of a function to be described through the supporting functionals associated with the points of $K$. 
It is worth mentioning that the characterization of symmetric operators in $\mathcal K(X)$ and $\mathcal L(X)$ remains open in the most general setting, for both the operator norm and the numerical radius norm.   In the Hilbert-space setting, for the numerical radius norm, the left symmetric operators on both $\mathcal K(\mathcal H)$ and $\mathcal L(\mathcal H)$ have been characterized (see \cite{CS}), while the corresponding characterization of the right symmetric operators is still elusive.  These questions provide natural directions for further study of the
interaction between orthogonality, symmetricity, and the geometry of
operator spaces.

\vspace{0.5cm}
\noindent \textbf{Acknowledgment:}  The author likes to  thank  Prof. Kallol Paul and Dr. Debmalya Sain for their helpful comments during the preparation of this article.

\end{document}